\def\mat#1{\ensuremath{#1}\xspace}
\def\cA{\mat{\mathbb{A}}}   %affine space
\def\cF{\mat{\mathbb{F}}}
\def\cL{\mat{\mathbb{L}}}   %
\def\cN{\mat{\mathbb{N}}}   %natural numbers
\def\cQ{\mat{\mathbb{Q}}}   %ring of rationals
\def\cC{\mat{\mathbb{C}}}   %complex numbers
\def\cZ{\mat{\mathbb{Z}}}   %ring of integers
\def\lA{\mat{\mathcal{A}}}
\def\lE{\mat{\mathcal{E}}}
\def\lH{\mat{\mathcal{H}}}
\def\lM{\mat{\mathcal{M}}}
\def\lP{\mat{\mathcal{P}}}
\let\Phitemp\Phi \def\Phi{\mat{\Phitemp}}
\let\Psitemp\Psi \def\Psi{\mat{\Psitemp}}
\let\etatemp\eta \def\eta{\mat{\etatemp}}
\def\La{\mat{\Lambda}}
\def\la{\mat{\lambda}}
\let\mutemp\mu\def\mu{\mat{\mutemp}}
\let\nutemp\nu\def\nu{\mat{\nutemp}}
\let\pitemp\pi\def\pi{\mat{\pitemp}}
\def\si{\mat{\sigma}}
\def\om{\mat{\omega}}
\def\al{\mat{\alpha}}
\def\be{\mat{\beta}}
\def\ga{\mat{\gamma}}
\def\Ga{\mat{\Gamma}}
\def\hi{\mat{\chi}}
\let\xitemp\xi \def\xi{\mat{\xitemp}}
\def\mrm@#1{\mat{\mathrm{#1}}}
\def\DMO{\DeclareMathOperator}
\DMO{\Hom}{Hom}
\DMO{\lHom}{\lH\mathit{om}}
\DMO{\Ext}{Ext}
\DMO{\lExt}{\lE\mathit{xt}}
\DMO{\End}{End}
\DMO{\Aut}{Aut}
\DMO{\Fun}{Fun}
\DMO{\Tor}{Tor}
\DMO{\ext}{ext}
\DMO{\Ob}{Ob}
\DMO{\Mor}{Mor}
\DMO{\im}{im}
\DMO{\coim}{coim}
\DMO{\coker}{coker}
\DMO{\Arr}{Arr}
\DMO{\Id}{Id}
\DMO{\add}{add} % splitting of idempotents (karoubinization)
\DMO{\ind}{ind} % category of ind-objects
\DMO{\pro}{pro} % category of pro-objects
\DMO{\Map}{Map} %
\DMO{\Iso}{Iso} %
\DMO{\Isom}{Isom}%
\DMO{\Ind}{Ind}
\DMO{\Presh}{Presh}
\DMO\coalg{Coalg}
\DMO{\Rep}{Rep}
\DMO{\Cor}{Cor}
\DMO{\Mod}{Mod}
\DMO{\rad}{rad}
\DMO{\soc}{soc}
\DMO{\ann}{ann}
\DMO{\Spec}{Spec}
\DMO{\spec}{Spec}
\DMO{\Proj}{Proj}
\DMO{\supp}{supp}
\DMO{\Coh}{Coh}
\DMO{\coh}{Coh}
\DMO{\Qcoh}{QCoh}
\DMO{\QCoh}{QCoh}
\DMO{\Pic}{Pic}
\DMO{\Div}{Div}
\DMO{\ch}{ch}
\DMO{\Hilb}{Hilb}
\DMO{\Fitt}{Fitt}
\DMO{\Quot}{Quot}
\DMO{\Gras}{Gr}
\DMO{\Flag}{Flag}
\DMO{\cone}{cone}
\DMO{\Tw}{Tw}
\DMO{\rank}{rk}
\DMO{\rk}{rk}
\DMO{\codim}{codim}
\DMO{\cov}{cov}
\DMO{\sgn}{sgn}
\DMO{\td}{td}
\DMO{\GL}{GL}
\DMO{\SL}{SL}
\DMO\Der{Der}
\DMO\der{Der}
\DMO\coder{Coder}
\DMO{\diag}{diag}
\DMO{\HMod}{HMod} %the homotopy category of modules over DGC
\DMO{\ad}{ad}
\DMO*{\colim}{colim}
\DMO*{\hocolim}{hocolim}
\DMO*{\holim}{holim}
\DMO{\Ho}{Ho}
\def\Gr{\mathbf{Gr}} %graded
\DMO{\har}{char}
\DMO{\sk}{sk}
\DMO{\cosk}{cosk}
\DMO{\Gal}{Gal}
\DMO{\tr}{tr}
\DMO{\Tr}{Tr}
\DMO{\Sh}{Sh}
\DMO{\Is}{Is} %Isometries
\DMO{\Hol}{Hol} %Holomorphic automorphisms
\DMO{\Lie}{Lie} %Lie algebra of a group
\DMO{\Res}{Res} %restriction
\DMO{\irr}{irr} %
\DMO{\Irr}{Irr} %
\DMO{\Exp}{Exp} %
\DMO{\Log}{Log} %
\DMO{\Pow}{Pow}
\DMO{\mult}{mult} %
\DMO{\height}{ht} %
\DMO{\wt}{wt}
\DMO{\Vect}{Vect}
\DMO{\moda}{mod}
\def\iso{\simeq}
\def\ts{\otimes}
\def\what#1{\mat{\widehat{#1}}}
\def\sb{\subset}
\def\sp{\supset}
\def\xx{\times}
\def\ms{\backslash} %minus set
\def\pser#1{[\![#1]\!]} %formal power series [[#1]]
\def\inv{^{-1}}
\def\tw{^{\rm tw}}
\def\st{^{s}}
\def\sst{^{ss}}
\def\ang#1{\mat{\left\langle #1\right\rangle}}
\def\set#1{\mat{\{ #1\}}}
\def\sets#1#2{\mat{\{ #1 \mid #2\}}}
\def\smat#1{\mat{\left(\begin{smallmatrix}#1\end{smallmatrix}\right)}}
\def\arrowsUsual{
\newarrow{TeXto}----{->}
\newarrow{TeXinto}C---{->}
\newarrow{TeXonto}----{->>}
\newarrow{TeXdashto}{}{dash}{}{dash}{->}
\def\ar{\rightarrow}
\def\emb{\hookrightarrow}
\def\mto{\mapsto}
\def\arr{\rTeXto}
\def\embb{\rTeXinto}
%\def\arr{\xrightarrow} %allows indexes
%\to is defined
\newarrow{Eq}=====
    }
\newif\ifukr\ukrfalse
\newif\ifrus\rusfalse
\newif\ifger\gerfalse
\def\theorems{
\newcounter{nthr} %auxiliary counter
\numberwithin{nthr}{section}
%\let\theHnthr\thenthr
%solves a problem with hyperref, Heiko <oberdiek@ruf.uni-freiburg.de>
%a different solution: option hypertexnames=false in hyperref
\newtheorem{thr}[nthr]{Theorem}
\newtheorem{prp}[nthr]{Proposition}
\newtheorem{lmm}[nthr]{Lemma}
\newtheorem{crl}[nthr]{Corollary}
\newtheorem{clm}[nthr]{Claim}
\newtheorem{conj}[nthr]{Conjecture}
\theoremstyle{definition}
\newtheorem{dfn}[nthr]{Definition}
\newtheorem{rmr}[nthr]{Remark}
\newtheorem{exm}[nthr]{Example}}
\def\br{\linebreak}
\def\ub#1{\mat{\overline{#1}}}  %upper bar
\def\eq{\equation}\def\endeq{\endequation}
\begin{document}

\title[]{Poincar\'e polynomials of moduli spaces 
of stable bundles over curves}%
\author{Sergey Mozgovoy}
\email{mozgov@math.uni-wuppertal.de}
%\date{5.11.07}
%\institute{Department of Mathematics and Informatics,
%University of Wuppertal,
%Gau\ss straße 20,
%42119 Wuppertal,
%Germany}

\begin{abstract}
Given a curve over a finite field, we compute the number
of stable bundles of not necessarily coprime rank and degree over it.
We apply this result to compute the virtual 
Poincar\'e polynomials of the moduli spaces of stable bundles
over a curve. A similar formula for the virtual 
Hodge polynomials and motives is conjectured.
\end{abstract}\maketitle
\section{Introduction}
Let $X$ be a smooth projective curve of genus $g$ over \cC.
Let $\lM(n,d)$ be the moduli space of stable vector bundles
of rank $n$ and degree $d$ on $X$.
The problem of computation of the Betti numbers of $\lM(n,d)$
has attracted the interest of many people. In the case of coprime $n$ and $d$,
a recursive formula for the Poincar\'e polynomial of $\lM(n,d)$
was given by Harder, Narasimhan, Desale and Ramanan \cite{HarNar1,DesRam1} by using
the Weil conjectures. The same recursive formula
was obtained later by Atiyah and Bott \cite{AtBot1} using the gauge theory.
Another method to prove the same recursive formula
was found by Bifet, Ghione and Letizia \cite{BGL1}.
The method of Atiyah and Bott was adapted later by Earl and Kirwan
\cite{EarlKirw1}
to give a similar
recursive formula for the Hodge polynomial of $\lM(n,d)$.
The method of Bifet, Ghione and Letizia was adapted
by del Ba{\~n}o \cite{Bano1} to give an analogous formula for the motive
of $\lM(n,d)$. 
The recursive formula for the Poincar\'e polynomials was solved
by Zagier \cite{Zagier1}. As a result, he got a nice explicit formula,
which is much simpler than the initial recursion (e.g.\ it does not contain
infinite sums in contrast to the recursive formula).
The formula of Zagier was further generalized by
Laumon and Rapoport \cite{LaumRap1}.
The formula of Zagier can be easily adapted for the cases of Hodge polynomials
and motives
(see \cite{Bano1} and Section \ref{sec:conj}).

In the case of not necessarily coprime rank and degree only
some partial results are known. The Betti and Hodge numbers of
$H^i(\lM(n,d))$ were computed for
$i<2(n-1)g-(n-1)(n^2+3n+1)-7$
by Arapura and Sastry \cite{ArapSast1} and for
$i<2(n-1)(g-1)$
by Dhillon \cite{Dhil1}.

In this paper we go in a somewhat different direction and determine
the virtual Poincar\'e polynomial
(see \cite{DanKhov} and Remark \ref{rmr:poincare of variety})
of $\lM(n,d)$ for arbitrary $n$ and $d$.
We use the approach by Harder, Narasimhan, Desale and Ramanan
and compute first the number of points of the corresponding
moduli space over finite fields and then use the Weil conjectures
to obtain the virtual Poincar\'e polynomial of this moduli space.
Given a curve $X$ over a finite field $k$,
a recursive formula has been proved in \cite{HarNar1,DesRam1} for a
weighted sum
$$r_{(n,d)}(k)=\sum_{M\in\lE_X\sst(n,d)}\frac1{\#\Aut M},$$
where $\lE_X\sst(n,d)$ denotes the set of representatives of
isomorphism classes of semistable sheaves over $X$ having rank $n$
and degree $d$. We show that the numbers $r_{n,d}(K)$ (for all
possible $n$, $d$ and finite field extensions $K/k$) can be used
to compute the numbers $a_{(n,d)}(k)$ of absolutely stable (see
Section \ref{subsec:stable}) sheaves over $X$ of rank $n$ and
degree $d$. The last number is precisely the number of the
$k$-rational points of the moduli space $\lM(n,d)$.

The same approach was used in \cite{MR1} to relate the weighted
sums over semistable representations of a quiver over finite
fields to the numbers of absolutely stable representations. There
the analogues of $r_{(n,d)}(\cF_{q})$ and $a_{(n,d)}(\cF_{q})$
(here $\cF_q$ is a finite field extension of $k$) are rational
functions (respectively, polynomials) in $q$ and it is possible to
apply the machinery of \la-rings to the usual \la-ring structure
on $\cQ(q)$ (see Section \ref{subsec:la-rings}) to get the
relation between the corresponding rational functions and
polynomials. In the case of sheaves over a curve, the numbers
$r_{(n,d)}(\cF_q)$ and $a_{(n,d)}(\cF_q)$ are not, in general,
rational functions in $q$ and we cannot use the formalism of
\la-rings. To pass around this problem, we introduce a new
\la-ring, called the ring of c-sequences (see Section
\ref{sec:c-seq}) such that the systems of rational numbers
$r_{(n,d)}=(r_{(n,d)}(K))_{K/k}$ and
$a_{(n,d)}=(a_{(n,d)}(K))_{K/k}$, where $K$ runs through the
finite field extensions of $k$, are elements of this ring.
Actually, for any algebraic variety $X$ over $k$, the system
$(\#X(K))_{K/k}$ is a c-sequence. We use the \la-ring of
c-sequences as an analogue of the Grothendieck ring of motives in
the case of a finite field. We prove a formula relating the
c-sequences $r_{(n,d)}$ and $a_{(n,d)}$ using the formalism of
\la-rings in a similar way as it was done for representations of
quivers in \cite{MR1}.

With any c-sequence, we associate its Poincar\'e function.
For any algebraic variety over $k$, its virtual Poincar\'e
polynomial coincides with the Poincar\'e function
of the associated c-sequence. The relation between the
c-sequences $r_{(n,d)}$ and $a_{(n,d)}$ induces the relation
between the corresponding Poincar\'e functions. The virtual
Poincar\'e polynomial of $\lM(n,d)$ is precisely the Poincar\'e
function of the c-sequence $a_{(n,d)}$. The Poincar\'e function
of $r_{(n,d)}$ can be computed using the recursive formula
of Harder, Narasimhan, Desale and Ramanan or using the explicit
formula due to Zagier.

Concerning the virtual Hodge polynomials (see \cite{DanKhov}),
we give a conjectural
formula that relates the virtual Hodge polynomials of $\lM(n,d)$
and certain rational functions, that should be understood as
analogues of the Poincar\'e function of $r_{(n,d)}$. These rational
functions where used by Earl and Kirwan \cite{EarlKirw1} to compute
the Hodge polynomial of $\lM(n,d)$ for coprime $n$ and $d$.

We also give a conjecture for the motive of $\lM(n,d)$.
In \cite{BehrDhil1}, Behrend and Dhillon found a formula
for the motives of the stacks of all bundles on a curve.
One can relate the motives of the
stacks of semistable bundles (which are analogues of $r_{(n,d)}$)
and the motives of the stacks of all bundles using the
Harder--Narasimhan filtration and get a formula
similar to the recursive formula of Harder, Narasimhan,
Desale and Ramanan. This was done, using a different language,
by del Ba{\~n}o \cite{Bano1}. The solution of the recursive formula
due to Zagier can also be applied here. 
We give a conjectural formula relating the motives of stacks
of semistable bundles and motives of moduli spaces
of stable bundles. To formulate it we use the standard \la-ring
structure on the Grothendieck ring of motives, where
the \si-operations are given by symmetric products.

The paper is organized as follows. In Section \ref{sec:prelim} we
gather some preliminary material: basic operations in complete
\la-rings,  the Galois descent for the sheaves on a variety over a
finite field, indecomposable and stable sheaves, some results on
the Hall algebra of the category of sheaves on a curve over a
finite field. In Section \ref{sec:HN rec} we discuss solutions of
the recursive formulas that occur from Harder-Narasimhan
filtrations. In Section \ref{sec:c-seq} we introduce the \la-ring
of c-sequences and prove its basic properties. In Section
\ref{sec:semist} we discuss formulas for c-sequences $r_{(n,d)}$
and its Poincar\'e functions. In Section \ref{sec:stable} we prove
formulas relating the c-sequences $r_{(n,d)}$ and $a_{(n,d)}$ and
their Poincar\'e functions. As a corollary, we give a formula for
the virtual Poincar\'e polynomials of the moduli spaces of stable
vector bundles on a curve. In Section \ref{sec:conj} we give
conjectural formulas for the virtual Hodge polynomials and for the
motives of the moduli spaces of stable vector bundles on a curve.

I would like to thank Markus Reineke for many helpful and
encouraging discussions. The suggestion that our methods
from \cite{MR1} could be applied in the case of stable
bundles is due to him.

\section{Preliminaries}\label{sec:prelim}
\subsection{Basic operations in complete
\texorpdfstring{\la}{lambda}-rings}\label{subsec:la-rings}
Any \la-ring $R$ (see e.g.\ \cite{Getz1,M2}) has three families of operations:
\la-operations, \si-operations and Adams operations. Our \la-rings
are always algebras over \cQ. Under this condition, the \la-ring
structure is uniquely determined by any of the above three families of
operations. We will usually use the Adams operations $\psi_n:R\ar R$, $n\ge1$,
to describe the \la-ring structure.
Given a \la-ring $R$, we endow the ring $R[x_1,\dots,x_r]$ with
a structure of a \la-ring by the formula
$$\psi_n(ax^\al)=\psi_n(a)x^{n\al},\qquad a\in R,\al\in\cN^r.$$
In the same way we endow the ring $R\pser{x_1,\dots,x_r}$
with a structure of a \la-ring. 
Let $R$ be a \la-ring and let $S$ be a multiplicatively closed subset
of $R$ that is closed under Adams operations. Then we can endow 
the localization $S\inv R$ with a structure of a \la-ring by the
formula
$$\psi_n(a/s)=\psi_n(a)/\psi_n(s),\qquad a\in R,s\in S.$$
If otherwise not stated, we always endow our rings with \la-ring
structures in the above described way.

A complete filtered \la-ring $R$ is a complete filtered ring 
endowed with a \la-ring structure 
respecting the filtration, that is $\psi_n(F^mR)\sb F^{nm}R$,
where $R\sp F^1R\sp F^2R\sp\dots$ is a filtration of $R$.
For example, if $R$ is a \la-ring, then $R\pser{x_1,\dots x_r}$
is a complete filtered \la-ring with the filtration induced by total grading.
Let $R$ be a complete filtered \la-ring. 
Define the map $\Exp:F^1R\ar1+F^1R$ by the formula
$$\Exp(f)=\sum_{k\ge0}\si_k(f)=\exp\Big(\sum_{k\ge1}\frac1k\psi_k(f)\Big).$$
We have $\Exp(f+g)=\Exp(f)\Exp(g)$, for $f,g\in F^1R$. The map
$\Exp$ has an inverse $\Log:1+F^1R\ar F^1R$ (see \cite{Getz1,M2}) given by
$$\Log(f)=\sum_{k\ge1}\frac{\mu(k)}k\psi_k(\log(f)),$$
where $\mu$ is a M\"obius function.
Define the map $\Pow:(1+F^1R)\xx R\ar1+F^1R$  by the formula
$$\Pow(f,g)=\Exp(g\Log(f)),\qquad f\in1+F^1R, g\in R.$$
Define the usual power map by $f^g=\exp(g\log(f))$.

\begin{lmm}[see {\cite[Lemma 22]{M2}}]\label{lmm:power}
Let $R$ be a complete filtered \la-ring, $f\in1+F^1R$ and $g\in R$.
Define the elements $g_k\in R$, $k\ge1$, inductively by the formula
$\sum_{k\mid n}kg_k=\psi_n(g)$, $n\ge1$. Then we have
$$\Pow(f,g)=\prod_{k\ge1}\psi_k(f)^{g_k}.$$
\end{lmm}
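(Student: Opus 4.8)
The plan is to prove the identity by comparing the ordinary logarithms of the two sides. Since $\log\colon 1+F^1R\to F^1R$ is a bijection with inverse $\exp$, and since both $\Pow(f,g)$ and the infinite product $\prod_{k\ge1}\psi_k(f)^{g_k}$ lie in $1+F^1R$, it suffices to check that their logarithms agree in $F^1R$. Throughout I would use three standard properties of a \la-ring: each Adams operation $\psi_k$ is a ring homomorphism, they compose as $\psi_k\psi_m=\psi_{km}$ with $\psi_1=\Id$, and—because $\log$ and $\exp$ are given by power series with rational coefficients while $\psi_k$ is a continuous \cQ-algebra map respecting the filtration—$\psi_k$ commutes with $\log$ and $\exp$ on $1+F^1R$ and $F^1R$ respectively. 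All convergence and rearrangements below are justified by completeness together with $\psi_n(F^mR)\sb F^{nm}R$, which forces the index-$k$ terms into $F^kR$.

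For the left-hand side, I would first record that the definition of $\Exp$ gives $\log\Exp(h)=\sum_{k\ge1}\frac1k\psi_k(h)$ for $h\in F^1R$. Applying this to $h=g\Log(f)$ and using that $\psi_k$ is multiplicative, then expanding $\Log f=\sum_{m\ge1}\frac{\mu(m)}m\psi_m(\log f)$ and using $\psi_k\psi_m=\psi_{km}$, yields
\[
\log\Pow(f,g)=\sum_{k\ge1}\frac1k\psi_k(g)\,\psi_k(\Log f)=\sum_{k,m\ge1}\frac{\mu(m)}{km}\,\psi_k(g)\,\psi_{km}(\log f).
\]

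For the right-hand side, since $\log\psi_k(f)=\psi_k(\log f)$ and $\psi_k(f)^{g_k}=\exp\!\big(g_k\log\psi_k(f)\big)$, taking $\log$ of the product turns it into $\sum_{k\ge1}g_k\,\psi_k(\log f)$. M\"obius inversion of the defining relation $\sum_{k\mid n}kg_k=\psi_n(g)$ gives $k\,g_k=\sum_{d\mid k}\mu(k/d)\psi_d(g)$, hence
\[
\sum_{k\ge1}g_k\,\psi_k(\log f)=\sum_{k\ge1}\frac1k\Big(\sum_{d\mid k}\mu(k/d)\psi_d(g)\Big)\psi_k(\log f).
\]

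The last step is to reconcile the two expressions. Writing $k=de$ in the right-hand sum, so that $e=k/d$ ranges over the complementary divisor, turns it into $\sum_{d,e\ge1}\frac{\mu(e)}{de}\,\psi_d(g)\,\psi_{de}(\log f)$, which is exactly the double sum obtained for the left-hand side after renaming $(k,m)\mapsto(d,e)$. Hence the logarithms coincide and the lemma follows by applying $\exp$. I expect the only genuinely delicate point to be the bookkeeping of convergence and the legitimacy of reindexing the double series; this is where completeness of the filtration and the compatibility $\psi_n(F^mR)\sb F^{nm}R$ must be invoked, since each fixed filtration degree receives contributions from only finitely many pairs $(k,m)$.
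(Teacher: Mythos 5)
Your proof is correct: taking logarithms of both sides, expanding $\log\Exp$ and $\Log$ in terms of Adams operations, and matching the resulting double sums via M\"obius inversion of $\sum_{k\mid n}kg_k=\psi_n(g)$ establishes the identity, and your appeal to completeness together with $\psi_n(F^mR)\subset F^{nm}R$ (so that each filtration degree receives only finitely many contributions) correctly justifies the reindexing and convergence of the product. Note that the paper itself gives no proof, quoting the lemma from \cite[Lemma 22]{M2}; your logarithm-comparison argument is the standard one for this statement and is essentially the expected proof.
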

  
For any $m\ge0$, define $[\infty,m]_v=\prod_{i=1}^m(1-v^i)\inv\in\cQ(v)$.

\begin{lmm}[Heine formula, see \cite{M4}]\label{lmm:heine}
We have
$$\Exp\Big(\frac x{1-v}\Big)=\sum_{m\ge0}[\infty,m]_vx^m$$
in $\cQ(v)\pser x$.
\end{lmm}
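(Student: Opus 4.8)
The plan is to evaluate the left-hand side directly from the definition of $\Exp$ and then identify the resulting product with the right-hand side by a one-line recursion. First I would compute the relevant Adams operations on the argument. Since $v$ is the localization variable, $\psi_k(v)=v^k$, and $\psi_k$ fixes $\cQ$, so in $\cQ(v)\pser{x}$ one has $\psi_k\big(\tfrac{x}{1-v}\big)=\tfrac{x^k}{1-v^k}$ (using $\psi_k(x\cdot a)=x^k\psi_k(a)$ together with the localization formula for $\psi_k$). Feeding this into $\Exp(f)=\exp\big(\sum_{k\ge1}\tfrac1k\psi_k(f)\big)$ gives
$$\Exp\Big(\frac x{1-v}\Big)=\exp\Big(\sum_{k\ge1}\frac{x^k}{k(1-v^k)}\Big).$$

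Next I would convert the exponent into a product. Writing $\frac{1}{1-v^k}=\sum_{i\ge0}v^{ik}$ and interchanging the order of summation, the exponent becomes $\sum_{i\ge0}\sum_{k\ge1}\tfrac1k(v^ix)^k=-\sum_{i\ge0}\log(1-v^ix)$, so that
$$\Exp\Big(\frac x{1-v}\Big)=\prod_{i\ge0}\frac{1}{1-v^ix}.$$
Equivalently, one may avoid the logarithm altogether: from $\frac{x}{1-v}=\sum_{i\ge0}v^ix$ and the multiplicativity $\Exp(f+g)=\Exp(f)\Exp(g)$, together with the elementary computation $\Exp(v^ix)=(1-v^ix)^{-1}$, the same product drops out.

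It then remains to prove the Euler identity $\prod_{i\ge0}(1-v^ix)^{-1}=\sum_{m\ge0}[\infty,m]_v x^m$. Setting $F(x)=\prod_{i\ge0}(1-v^ix)^{-1}=\sum_{m\ge0}c_mx^m$ and splitting off the $i=0$ factor yields the functional equation $(1-x)F(x)=F(vx)$. Comparing coefficients of $x^m$ gives $c_m-c_{m-1}=v^mc_m$, hence $c_m=c_{m-1}/(1-v^m)$; since $c_0=1$ this forces $c_m=\prod_{i=1}^m(1-v^i)^{-1}=[\infty,m]_v$, as required. The only genuinely delicate point is the interchange of summations and the passage between $\exp$ and the infinite product in the middle step: this must be justified in the completed ring, using that each factor $(1-v^ix)^{-1}$ equals $1+O(x)$ and that $v^i$ tends to $0$ $v$-adically, so that the infinite product converges and contributes only finitely to each coefficient of $x^m$. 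Everything else is a formal manipulation or the elementary recursion above.
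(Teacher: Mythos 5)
Your proof is correct, and in fact more than the paper offers: the paper states the Heine formula without proof, simply citing \cite{M4}, so there is no internal argument to compare against. Your derivation is the standard one and every step checks out against the paper's conventions: the Adams operations on $\cQ(v)\pser x$ do satisfy $\psi_k\big(\tfrac x{1-v}\big)=\tfrac{x^k}{1-v^k}$ (localization of $\cQ[v]$ with $\psi_k(v)=v^k$, then the rule $\psi_k(ax^\al)=\psi_k(a)x^{k\al}$), the passage to $\prod_{i\ge0}(1-v^ix)^{-1}$ is valid, and the functional-equation argument $(1-x)F(x)=F(vx)$, giving $c_m-c_{m-1}=v^mc_m$ and hence $c_m=[\infty,m]_v$ by induction from $c_0=1$, is exactly Euler's classical proof of this identity. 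One phrasing in your convergence discussion should be tightened: it is not true that only finitely many factors contribute to the coefficient of $x^m$ — for $m\ge1$ infinitely many factors contribute (already the coefficient of $x$ is $\sum_{i\ge0}v^i$). The correct statement, which your appeal to $v$-adic smallness of $v^i$ already contains in substance, is that the partial products stabilize modulo $(v^N)$, so the product converges in $\cQ\pser v\pser x$; one then concludes in $\cQ(v)\pser x$ via the coefficientwise embedding $\cQ(v)\emb\cQ\lser v$, under which both $\Exp$ and the claimed right-hand side are compatible. With that one sentence repaired, the argument is complete and self-contained.
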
  
  
\subsection{Chern character}\label{subsec:chern}
Let $X$ be a smooth projective curve over a field $k$. 
The map 
$$\ch:K_0(\Coh X)\ar\cZ^2$$ 
given by $[F]\ar (\rk F,\deg F)$ is called the Chern character.
By the Riemann-Roch formula \cite[Ch. 15]{Fulton1}, we have
\begin{align*}
\hi(F,G)=&\dim\Hom(F,G)-\dim\Ext^1(F,G)\\
=&\rk F\deg G-\deg F\rk G+(1-g)\rk F\rk G.
\end{align*}
For any elements $\al=(n,d)$, $\be=(n',d')$ in $\cZ^2$, define
$$\ang{\al,\be}=nd'-dn'+(1-g)nn'=\al C\be^t,$$
where $C=\smat{1-g&1\\-1&0}$. Then 
$$\hi(F,G)=\ang{\ch F,\ch G}.$$

\subsection{Galois descent}
Let $X$ be an algebraic variety over a finite field $k$ and
let $K/k$ be a finite field extension.
The group $\Ga=\Gal(K/k)$ acts on $X_K=X\ts_kK$ in a natural way.
For any coherent sheaf $M$ over $X_K$ and for any $\si\in\Ga$,
there is defined a direct image sheaf $\si_*M$.
A $\Ga$-equivariant coherent sheaf $M$ on $X_K$ is a coherent sheaf 
endowed with a system of isomorphisms $\si_M:M\ar\si_*M$, $\si\in\Ga$,
satisfying $\si (\tau_M)\circ\si_M=(\si\tau)_M$ for any $\si,\tau\in\Ga$. 
Note that the map $f:M\ar\si_*M$ of coherent sheaves is given
by the map $\ub f:M\ar M$ of abelian sheaves, satisfying 
locally $\ub f(am)=\si(a)\ub f(m)$.
It follows that if $\si\in\Ga$ is a generator, then the structure
of a $\Ga$-equivariant sheaf on $M$ is given by the map $f:M\ar\si_*M$
such that $(\ub f)^{\#\Ga}=1$.
We denote by $\Coh_\Ga X_K$ the category
of \Ga-equivariant coherent sheaves on $X_K$.
For any coherent sheaf $M$ on $X$,
the sheaf $M_K=M\ts_kK$ on $X_K$ is a \Ga-equivariant sheaf
in a canonical way. 
The following result is a version of the Galois descent
(see, e.g., \cite{SGA1})

\begin{prp}
The functor $\Coh X\ar\Coh_\Ga X_K$, sending a sheaf $M$ to 
the \Ga-equivariant sheaf $M_K$
is an equivalence of categories. The inverse functor is given
by $N\mto N^\Ga$, the \Ga-invariant part of $N$.
\end{prp}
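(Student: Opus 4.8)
The plan is to prove this descent statement by constructing explicit functors in both directions and checking they are mutually quasi-inverse. First I would set up notation: fix a generator $\si$ of the cyclic group $\Ga=\Gal(K/k)$ of order $m=\#\Ga$, and recall that by the preceding discussion a $\Ga$-equivariant structure on a coherent sheaf $N$ on $X_K$ is equivalent to giving a single semilinear map $\ub f:N\ar N$ (so that $\ub f(an)=\si(a)\ub f(n)$ locally) satisfying $(\ub f)^{m}=1$. The base-change functor $M\mto M_K=M\ts_k K$ already comes with a canonical $\Ga$-equivariant structure: on $M\ts_k K$ the generator acts by $1\ts\si$, which is semilinear over $K$ and visibly has order $m$. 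In the reverse direction, for a $\Ga$-equivariant sheaf $N$ one forms $N^\Ga$, the subsheaf of sections fixed by the semilinear automorphism $\ub f$, which is naturally a sheaf of $\lO_X$-modules since the structure map is $\Ga$-equivariant.

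The heart of the argument is to verify the two natural isomorphisms $M\iso (M_K)^\Ga$ and $(N^\Ga)_K\iso N$. Both are local statements on $X$, so I would reduce to an affine open $U=\Spec A$ over $k$, where a coherent sheaf corresponds to a finitely generated $A$-module and the whole question becomes a statement about the classical Galois descent for modules along the faithfully flat étale extension $A\ar A\ts_k K$. Concretely, for a finitely generated $A$-module $P$ one must check $P\iso (P\ts_k K)^\Ga$, which follows from the identity $K^\Ga=k$ together with flatness of $-\ts_k K$; and for an $A\ts_k K$-module $Q$ equipped with a compatible semilinear $\Ga$-action one must check the canonical map $Q^\Ga\ts_k K\ar Q$ is an isomorphism. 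This last map is the crucial point.

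The main obstacle is exactly the surjectivity and injectivity of $Q^\Ga\ts_k K\ar Q$, i.e.\ that the $\Ga$-invariants generate $Q$ after extending scalars back up. The clean way to handle this is the \emph{normal basis}/averaging argument: since $K/k$ is a finite Galois (hence separable) extension, the trace pairing is nondegenerate and one can produce a $k$-basis of $K$ on which $\Ga$ acts as a permutation, or equivalently use that $K\ts_k K\iso\prod_{\si\in\Ga}K$ as $K$-algebras with $\Ga$ acting by permuting the factors. This splitting lets one explicitly realize the projector onto invariants and exhibit $Q$ as $Q^\Ga\ts_k K$. I would therefore state the descent for modules as a lemma, prove it by this averaging/trace argument over the separable extension $K/k$, and then globalize: because the two natural transformations are defined intrinsically and agree with the local isomorphisms on each affine chart, they glue to isomorphisms of sheaves. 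Finally I would note that both functors are evidently exact and compatible with the coherence condition (finite generation is preserved by $-\ts_k K$ and descends along the faithfully flat extension), so the equivalence respects $\Coh$ rather than merely $\QCoh$.
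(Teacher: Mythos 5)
Your proof is essentially correct, but there is no proof in the paper to compare it with step by step: the paper states this proposition without argument, as a standard instance of Galois descent, with a pointer to SGA1. So what you have done differently is simply to supply the classical elementary proof in full: reduce to an affine chart $U=\Spec A$ defined over $k$, reformulate equivariance as a single $\si$-semilinear map $\ub f$ with $\ub f^{\,m}=1$ (legitimate here because $\Ga=\Gal(K/k)$ is cyclic, $k$ being finite --- the paper makes the same reduction in the paragraph before the proposition), verify the unit $P\iso(P\ts_kK)^\Ga$ from exactness of $P\ts_k(-)$ and $K^\Ga=k$, and prove the counit $Q^\Ga\ts_kK\iso Q$ by the trace/normal-basis argument, equivalently via the idempotent decomposition $K\ts_kK\iso\prod_{\si\in\Ga}K$. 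This is exactly Speiser's lemma for semilinear representations, and it buys a self-contained argument avoiding the general faithfully flat descent machinery of SGA1, at the cost of using separability of $K/k$ in an essential way (which is automatic here). Two small points to tighten: first, $\pi:X_K\ar X$ is finite flat of degree $[K:k]$ but not a homeomorphism (closed points of $X$ may split in $X_K$), so ``$N^\Ga$, the subsheaf of sections fixed by $\ub f$'' should be read as the invariants of the pushforward, $(N^\Ga)(U)=N(\pi^{-1}U)^\Ga$, which is how it acquires its $\lO_X$-module structure via $\lO_X=(\pi_*\lO_{X_K})^\Ga$; your affine-local description $Q^\Ga$ as an $A$-module is the precise version of this, but the global phrasing should match it. Second, the affine cover must be chosen on $X$ (over $k$), so that each chart pulls back to a $\Ga$-stable chart of $X_K$; a cover of $X_K$ by arbitrary affines would not interact with the action. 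With these adjustments your argument is complete, including the coherence bookkeeping (finite generation ascends along $-\ts_kK$ and descends along the faithfully flat map $A\ar A\ts_kK$).
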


\subsection{Indecomposable sheaves}
Let $X$ be a projective scheme over a finite field~$k$.
Then all the homomorphism spaces between coherent sheaves
on $X$ are finite dimensional and the category $\Coh X$ is
Krull-Schmidt \cite{Atiyah1}. We say that a sheaf $M\in\Coh X$ is absolutely
indecomposable if, for any finite field extension $K/k$,
the sheaf $M_K$ on $X_K$ is indecomposable. For any
sheaf $M\in\Coh X$, we define the splitting field of a sheaf $M$ 
to be the minimal finite field extension $K/k$ such that all the 
indecomposable direct summands of $M_K$ are absolutely indecomposable
(it will be shown that such a field is unique).
Given a finite field extension $K/k$ and a sheaf $M\in\Coh X_K$, 
we define the minimal field of definition of $M$ to be the 
minimal field extension $L/k$ contained in $K$ 
such that there exists a sheaf $M'\in\Coh X_L$ with $M'_K\iso M$
(it will be shown that such a field is unique).
For any coherent sheaf $M\in\Coh X_K$, we define
$k(M)=\End M/\rad(\End M)$.  

\begin{lmm}
For any finite field extension $L/K$,
we have $k(M_L)=k(M)\ts_KL$.
\end{lmm}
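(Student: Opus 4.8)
The plan is to reduce the statement to a purely ring-theoretic fact about how the Jacobson radical of a finite-dimensional algebra behaves under a separable field extension.

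First I would observe that endomorphisms of coherent sheaves are compatible with flat base change. Since $L/K$ is a field extension it is flat, and since $X_K$ is projective over the finite field $K$ the endomorphism spaces are finite dimensional. Flat base change for $\Hom$ of coherent sheaves on a proper scheme then yields a canonical isomorphism of $L$-algebras $\End(M_L)\iso\End M\ts_K L$. Writing $A=\End M$, the lemma therefore reduces to the claim that
$$\rad(A\ts_K L)=\rad(A)\ts_K L,$$
since dividing out would give $k(M_L)=(A\ts_K L)/\rad(A\ts_K L)=(A/\rad A)\ts_K L=k(M)\ts_K L$.

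To establish the displayed identity I would argue by two inclusions. For the first, $\rad A$ is a nilpotent ideal of $A$, so $\rad(A)\ts_K L$ is a nilpotent ideal of $A\ts_K L$; as every nilpotent ideal is contained in the radical, $\rad(A)\ts_K L\sbe\rad(A\ts_K L)$. For the reverse inclusion I would use that the quotient $(A\ts_K L)/(\rad(A)\ts_K L)\iso(A/\rad A)\ts_K L$ is semisimple: the algebra $A/\rad A$ is a semisimple $K$-algebra, and because $K$ is a finite field it is perfect, so every semisimple $K$-algebra is separable and remains semisimple after any field extension. Since the radical is the smallest ideal with semisimple quotient, this forces $\rad(A\ts_K L)\sbe\rad(A)\ts_K L$. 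Combining the two inclusions proves the identity, and hence the lemma.

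The main obstacle is the reverse inclusion, which is precisely where the perfectness of the finite field $K$ enters: over an inseparable extension one can have $\rad(A\ts_K L)$ strictly larger than $\rad(A)\ts_K L$, so the argument genuinely relies on $K$ being finite and therefore perfect. The flat base change isomorphism for endomorphism algebras is standard, and I would treat it as routine.
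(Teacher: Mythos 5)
Your proof is correct and takes essentially the same route as the paper: both reduce the lemma to the identity $\rad(A\ts_KL)=(\rad A)\ts_KL$ for a finite-dimensional $K$-algebra $A=\End M$. The only difference is that the paper cites Bourbaki (Cor.\ 7.2.2 of Alg\`ebre VIII, invoking separability of $L/K$) for this identity, whereas you prove it directly via nilpotency of $\rad A$ and the perfectness of the finite field $K$ --- an equally valid justification of the same key fact.
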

\begin{proof}
It is enough to show that, for any finite-dimensional algebra
$A$ over $K$, we have $\rad(A_L)=(\rad A)_L$.
This follows from \cite[Cor. 7.2.2]{BourAlg8} as $L/K$
is a separable field extension.
\end{proof}

\begin{lmm}
Let $M,N\in\Coh X$ and assume that there exists
some finite extension $K/k$ such that $M_K\iso N_K$.
Then $M\iso N$.
\end{lmm}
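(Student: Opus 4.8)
This is the Noether--Deuring theorem for coherent sheaves, and the plan is to reduce it to the Krull--Schmidt property of $\Coh X$ recorded above by transporting the hypothesis from $X_K$ back down to $X$ along the structure map. Set $n=[K:k]$ and let $\pi\colon X_K\ar X$ be the (finite, flat, degree $n$) projection, so that $M_K=\pi^*M$ and $N_K=\pi^*N$. The first step is the observation that pushing a pulled-back sheaf forward merely produces a direct sum of copies: since $\pi_*\lO_{X_K}\iso\lO_X\ts_kK\iso\lO_X^{\oplus n}$ as $\lO_X$-modules, the projection formula gives
$$\pi_*M_K\iso M\ts_{\lO_X}\pi_*\lO_{X_K}\iso M^{\oplus n},$$
and likewise $\pi_*N_K\iso N^{\oplus n}$.

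Next I would simply apply the functor $\pi_*$ to the assumed isomorphism $M_K\iso N_K$; by the previous step this yields an isomorphism $M^{\oplus n}\iso N^{\oplus n}$ in $\Coh X$. The final step is cancellation: since $\Coh X$ is Krull--Schmidt, $M$ and $N$ have decompositions into indecomposables that are unique up to reordering, and in $M^{\oplus n}$ (resp.\ $N^{\oplus n}$) the multiplicity of each indecomposable summand is exactly $n$ times its multiplicity in $M$ (resp.\ $N$). Comparing the two unique decompositions of the isomorphic objects $M^{\oplus n}$ and $N^{\oplus n}$ forces these multiplicities to agree after dividing by $n$, whence $M\iso N$.

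The only points that require care are the identification of $\pi_*\pi^*$ with $n$-fold restriction of scalars (the projection formula together with $K\iso k^{n}$ as a $k$-vector space) and the cancellation consequence of the Krull--Schmidt theorem; I do not expect either to be a serious obstacle, and in particular this argument needs neither that $K/k$ be Galois nor that $k$ be finite. A heavier alternative would be to use the Galois-descent equivalence stated above and reinterpret the discrepancy between the two descent data on $M_K\iso N_K$ as a class in $H^1(\Gal(K/k),\Aut(M_K))$, which vanishes by a Hilbert~90 / Lang-type argument; the pushforward route is cleaner and is the one I would take.
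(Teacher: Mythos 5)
Your proof is correct and is essentially the paper's own argument: push forward along the finite morphism $\pi\colon X_K\ar X$ to get $\pi_*(M_K)\iso M^{\oplus n}$ and $\pi_*(N_K)\iso N^{\oplus n}$ with $n=[K:k]$, then cancel using the Krull--Schmidt property of $\Coh X$. The only difference is that you spell out the projection-formula step and the multiplicity-counting cancellation, which the paper leaves implicit.
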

\begin{proof}
Consider the finite etale morphism $\pi:X_K\ar X$.
Then $\pi_*(M_K)\iso M^{n}$ and
$\pi_*(N_K)\iso N^{n}$, where $n=[K:k]$.
As $\Coh X$ is a Krull-Schmidt category, we get $M\iso N$.
\end{proof}

This lemma implies that any two $\Gal(K/k)$-equivariant
structures on the sheaf on $X_K$ descend
to isomorphic sheaves on $X$. The following proposition 
should be compared with \cite[Lemma 3.4]{Kac2}

\begin{prp}\label{prp:indecomp}
Let $K/k$ be a finite field extension and let $M\in\Coh X_K$ be a coherent sheaf.
Then
\begin{enumerate}
	\item $M$ is indecomposable if and only if $k(M)$ is a field.
	\item $M$ is absolutely indecomposable if and only if $k(M)=K$.
	\item If $M$ is indecomposable then the splitting field of $M$ is unique and
	equals $k(M)$.
	\item If $M$ is indecomposable then the  minimal field of definition of 
	$M$ is unique and equals $K^\Ga$, where 
	$\Ga=\sets{\si\in\Gal(K/k)}{\si_*M\iso M}$.	
	\item If $N\in\Coh X$ is indecomposable and $K$ is its splitting field, then
	there exists an absolutely indecomposable sheaf $M\in\Coh X_K$ with a minimal
	field of definition $K$ 
	such that $N_K\iso\bigoplus_{\si\in\Gal(K/k)}\si_* M$ 
	and both $\Gal(K/k)$-equivariant sheaves descend to $N$.
	\item If $M$ is indecomposable and has the minimal field of definition $K$,
	then the $\Gal(K/k)$-equivariant sheaf $\bigoplus_{\si\in\Gal(K/k)}\si_* M$
	descends to the indecomposable sheaf over $X$ having the splitting field
	$k(M)$. 
\end{enumerate}
\end{prp}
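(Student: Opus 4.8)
The plan is to treat the six statements in order, using throughout that $\Coh X_K$ is Krull--Schmidt with finite-dimensional (hence, $K$ being finite, finite) endomorphism rings, together with the two preceding lemmas. For (1), $M$ is indecomposable iff $\End M$ is local iff the finite semisimple ring $k(M)=\End M/\rad(\End M)$ is a division ring; since a finite division ring is a field by Wedderburn's theorem, $M$ is indecomposable iff $k(M)$ is a field. For (2) I combine this with the first lemma: $M$ is absolutely indecomposable iff $M_L$ is indecomposable for every finite $L/K$, i.e.\ iff $k(M)\ts_KL=k(M_L)$ is a field for every such $L$. Writing $F=k(M)$ (a field once $M$ is indecomposable), and using that over finite fields $F\ts_KL$ is a field for all $L$ precisely when $F=K$ (take $L=F$: the algebra $F\ts_KF$ splits as soon as $F\neq K$), this is equivalent to $k(M)=K$.

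For (3) put $F=k(M)$, a field by (1). The first lemma gives $k(M_F)=F\ts_KF$, which, $F/K$ being a separable extension of finite fields, is isomorphic to a product of $[F:K]$ copies of $F$. Decoding this semisimple quotient shows $M_F$ is a sum of $[F:K]$ pairwise non-isomorphic indecomposable summands $P_i$ with $k(P_i)=F$, each absolutely indecomposable by (2); hence $F$ splits $M$. For minimality and uniqueness I use $k(M_{K'})=F\ts_KK'$: if every indecomposable summand of $M_{K'}$ is absolutely indecomposable then $k(M_{K'})$ is a product of copies of $K'$, and over finite fields $F\ts_KK'$ is such a product exactly when $F\sbe K'$. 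Thus every splitting field contains $F=k(M)$, so $F$ is the unique minimal one.

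The crux is (4). As $\Gal(K/k)$ is cyclic, so is $\Ga$; write $\Ga=\langle\si\rangle$, $n=\#\Ga$, so $\Gal(K/K^\Ga)=\Ga$. By the remark on Galois descent it suffices to equip $M$ with one isomorphism $f\colon M\ar\si_*M$ whose underlying map satisfies $\ub f^{\,n}=1$. Any isomorphism $f_0$ exists since $\si\in\Ga$, and the obstruction $\theta=\ub{f_0}^{\,n}\in\Aut M$ is invariant under conjugation by $\ub{f_0}$; replacing $f_0$ by $uf_0$ reduces to solving a twisted norm equation $u\cdot{}^\si u\cdots{}^{\si^{n-1}}u=\theta\inv$ in $\Aut M=(\End M)^\times$. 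I would solve it first modulo $\rad(\End M)$, where it becomes surjectivity of the norm for the cyclic extension of finite residue fields (Hilbert~90), and then lift through the radical filtration by successive approximation; this is the main obstacle and is exactly where finiteness of $K$ enters. Granting descent to $L_0=K^\Ga$, minimality is easy: if $M\iso M'_K$ for some $M'$ over $X_{L'}$, then each $\si\in\Gal(K/L')$ fixes $M'$, so $\si_*M\iso M$ and $\Gal(K/L')\sbe\Ga$, whence $L_0\sbe L'$; thus $L_0$ lies in every field of definition and is the unique minimal one.

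For (5), the hypothesis that $K$ is the splitting field of $N$ gives $k(N)=K$ by (3), so the computation in (3) presents $N_K$ as a sum of $r=[K:k]$ pairwise non-isomorphic absolutely indecomposable summands. Through the canonical equivariant structure $\Gal(K/k)$ permutes these; indecomposability of $N$ forces the action to be transitive, and as the orbit has size $r=\#\Gal(K/k)$ it is regular. Fixing a summand $M$ yields $N_K\iso\bigoplus_{\si\in\Gal(K/k)}\si_*M$ with trivial $\Ga$, so $M$ is absolutely indecomposable (its $k(M)=K$) with minimal field of definition $K$ by (4); the statement after the second lemma, that any two equivariant structures descend to isomorphic sheaves, shows both structures descend to $N$. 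For (6), $\Ga=\{1\}$ makes the $\si_*M$ pairwise non-isomorphic, so the permutation structure descends to some $N$ with $N_K\iso\bigoplus_\si\si_*M$, and a $\Gal$-invariant idempotent of $\End(N_K)$ is $0$ or $1$ because the action on summands is regular, so $N$ is indecomposable. To identify its splitting field I base change to $F=k(M)$, combine (3) for $M$ with the compatibility $(\si_*M)_F\iso\tl\si_*(M_F)$ for a lift $\tl\si\in\Gal(F/k)$ of $\si$, and argue by transitivity and counting as in (5) that $N_F$ is a sum of $[F:k]$ pairwise non-isomorphic absolutely indecomposable summands; the finite-field tensor identity then forces $k(N)=F=k(M)$, which by (3) is the splitting field of $N$.
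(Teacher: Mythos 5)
Parts (1)--(3), (5) and (6) of your proposal are essentially sound and, for (1)--(3), follow the paper's own route: Wedderburn for (1), the lemma $k(M_L)\iso k(M)\ts_KL$ together with the splitting of $F\ts_KF$ for (2), and lifting idempotents along $k(M_{L'})\iso k(M)\ts_KL'$ for (3). In (5) you deviate from the paper in an interesting way: the paper invokes the Wedderburn--Malcev theorem to get a section $K\ar\End(N)$ and hence explicit idempotents $e_\si\in\End(N)\ts_kK$ with $\tau(e_\si)=e_{\si\tau\inv}$, whereas you decompose $N_K$ via (3) and argue softly that the canonical equivariant structure permutes the $[K:k]$ pairwise non-isomorphic summands transitively (an orbit sum gives a Galois-invariant idempotent, which descends and must be trivial by indecomposability of $N$), then read off regularity by counting. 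That variant is correct and avoids structure theory of algebras; your (6) likewise matches the paper up to a more roundabout identification of $k(N)$ with $k(M)$ (the paper just computes $k(N_K)\iso\prod_{\si}k(M)$ and compares dimensions).

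The genuine gap is in (4), exactly where you write ``this is the main obstacle.'' Reducing descent to the twisted norm equation $u\cdot{}^{\si}u\cdots{}^{\si^{n-1}}u=\theta\inv$ in $(\End M)^\times$ is fine, but neither of your two steps is actually carried out, and both conceal real content. Modulo the radical you must first verify that conjugation by $\ub{f_0}$ induces an automorphism of the field $k(M)$ of order exactly $n$ with $\bar\theta$ in its fixed field (its $n$-th power is conjugation by $\theta$, hence trivial on $k(M)$, and it restricts to $\si$ on $K$) before Hilbert 90 for finite fields applies. Worse, conjugation by $\ub{f_0}$ is \emph{not} a group action on $\End M$ --- its $n$-th power is inner, not the identity --- so ``lift through the radical filtration by successive approximation'' is not routine: at each layer you must first renormalize so that the obstruction lies in $1+\rad^j$, only then does the induced operator on $\rad^j/\rad^{j+1}$ become an honest semilinear action to which an additive Hilbert 90 (surjectivity of the twisted trace) can be applied. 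This can all be pushed through, but it is precisely the work you omit. The paper sidesteps the entire cohomological computation by an idea you did not consider: since $\Aut(M)$ is a finite group, $\theta=\ub{f_0}^{\,n}$ has finite order $n'$; pass to an extension $K'/K$ of degree $n'$, extend $f_0$ to $f':M_{K'}\ar\si'_*M_{K'}$ by $f'(m\ts a)=f_0(m)\ts\si'(a)$, so that $(f')^{nn'}=1$ on the nose and one gets a genuine $\Gal(K'/L)$-equivariant structure over the \emph{larger} field; Galois descent produces $N\in\Coh X_L$ with $N_{K'}\iso M_{K'}$, and the paper's earlier Krull--Schmidt lemma (isomorphism after a further base change forces isomorphism) brings this back down to $N_K\iso M$. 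Enlarging $K$ beyond the field of interest and returning via that lemma is the missing idea; absent it, you would need to complete your norm-equation argument in full (in effect the vanishing of a twisted $H^1$, which one could alternatively extract from Lang's theorem for the connected unit group $(\End M)^\times$, though that too requires a setup you do not supply).
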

\begin{proof}
1) If $M$ is indecomposable then $\End M$ is a local algebra and $k(M)$ is
  a division algebra over a finite field $K$. Thus, $k(M)$ is a field.
  If $M\iso\oplus_{i=1}^nN_i^{r_i}$, where $M_i$ are pairwise non-isomorphic 
	sheaves then $k(M)\iso\prod_{i=1}^nM(r_i,k(N_i))$. 
	This is a field if and only if $n=1$ and $r_1=1$.\\
2) If $k(M)=K$ and $L/K$ is a finite field extension then 
	$k(M_L)\iso k(M)\ts_KL\iso L$ and therefore $M_L$ is indecomposable.
  If $M$ is absolutely indecomposable and $k(M)=L$, then 
  $k(M_L)\iso L\ts_KL\iso\prod_{i=1}^{[L:K]}L$. As $M_L$ is indecomposable,
  we have $[L:K]=1$.\\
3) Let $L=k(M)$ and let $L'$ be some splitting field of $M$.
	It is known \cite[Proposition~8.3]{BourAlg8} that 
	$k(M_{L'})\iso L\ts_KL'\iso\prod_{i=1}^n L_i$ 
	is a product of fields. The corresponding decomposition
	of unit into the sum of idempotents can be lifted
	to $\End(M_{L'})$ and we get a decomposition 
	$M_{L'}=\bigoplus_{i=1}^nM_i$, where $k(M_i)\iso L_i$. It follows that the sheaves
	$M_i$ are indecomposable and therefore absolutely indecomposable.
	This implies that $L_i=L'$. It is known \cite[Proposition~8.3]{BourAlg8}
	that every $L_i$ is a composite field of the fields $L$ and $L'$.  
	Therefore $L$ is contained in $L_i=L'$.
	To show that $L$ is itself a splitting field, we 
	note that $k(M_{L})\iso L\ts_KL\iso\prod_{i=1}^{[L:K]}L$
	by \cite[Proposition~8.4]{BourAlg8}. 
	The corresponding decomposition of unit into the sum of idempotents 
	can be lifted to $\End(M_{L})$ and we get a decomposition 
	$M_{L}=\bigoplus_{i=1}^{[L:K]}M_i$, where $k(M_i)\iso L$.
	This implies that $M_i$ are absolutely indecomposable.\\
4) If $L\sb K$ is a field of definition of $M$ then,
	for every $\si\in\Gal(K/L)$, we have $\si_*M\iso M$ and therefore
	$\si\in\Ga$. This implies that $K^\Ga\sb L$. 
	Let us show that $K^\Ga$ is a field of definition of $M$. 
  Let $L=K^\Ga$, $\si$ be some generator of $\Ga=\Gal(K/L)$
  and let $n=[K:L]$.
   To show that $M$ is defined over $L$, we have to define
   a $\Gal(K/L)$-equivariant structure on $M$. We will need
   to enlarge the field $K$ first.      
   By assumption, there exists an isomorphism $f:M\ar\si_*M$.
   We will write $f^r$ to denote the composition 
   $M\ar\si_*M\ar\si_*^2M\ar\dots\ar\si_*^rM$.
   Then $f^n:M\ar M$ is an automorphism of $M$.
   The group $\Aut(M)$ is finite
   and there exists some $n'\ge1$ such that $f^{nn'}=1$.
   Let $K'/K$ be a field extension of degree $n'$. 
   Let $\si'\in\Gal(K'/L)$ be some generator that is mapped
   to $\si\in\Gal(K/L)$. We can extend the map $f:M\ar\si_*M$
   to the map $f':M_{K'}\ar\si'_*M_{K'}$ by 
   $f'(m\ts a)=f(m)\ts\si'(a)$. 
   This map satisfies $(f')^{nn'}=1$
   and defines a structure of $\Gal(K'/L)$-equivariant sheaf
   on $M_{K'}$. 
   By Galois descent, there exists some $N\in\Coh X_L$ with
   $N_{K'}\iso M_{K'}$. It follows from the above lemma that 
   $N_{K}\iso M$.  \\
5) Let $A=\End(N)$ and let $\Ga=\Gal(K/k)$.
	It is known \cite[Proposition~8.4]{BourAlg8} 
	 that there exists an isomorphism of $k$-algebras
   $K\ts_kK\ar\prod_{\si\in\Ga}K_\si$, where $K_\si$ denotes a copy of $K$
   and the projection $K\ts_kK\ar K_\si$ is given by $x\ts y\mto x\si(y)$.
   By the Wedderburn-Malcev
   theorem \cite[Thoerem 6.2.1]{DrozdKir}, 
   there exists a section $K\ar A$ of the projection
   $A\ar K$. Applying $A\ts_K$ to the isomorphism
   $K\ts_kK\ar\prod_{\si\in\Ga}K_\si$   
   we get an isomorphism 
   $A\ts_kK\ar\prod_{\si\in\Ga}A_\si$, where $A_\si$ denotes a copy of $A$
   and the projection $A\ts_kK\ar A_\si$ is given by $x\ts y\mto x\si(y)$.
   Let $1=\sum_{\si\in\Ga}e_\si$ be the corresponding decomposition
   of the unit in $A\ts_kK$. Then we have $\tau(e_{\si})=e_{\si\tau\inv}$,
   where the action of \Ga on $A\ts_kK$ is given by the action
   on $K$. We get a decomposition 
   $N_K=\bigoplus_{\si\in\Ga}M_\si$, where the direct summand $M_\si$ 
   corresponds to the idempotent $e_\si$. 
   It follows from $k(N_K)\iso K\ts_kK\iso\prod_{\si\in\Ga}K_\si$
   that $k(M_\si)=K$ and the sheaves $M_\si$ are pairwise non-isomorphic.
   It follows from $\tau(e_{\si})=e_{\tau\inv\si}$ that 
   $\tau_*M_\si\iso M_{\tau\si}$.
   Let $M=M_1$. Then we have $N_K\iso\bigoplus_{\si\in\Gal(K/k)}\si_* M$.
   By the above lemma these $\Gal(K/k)$-equivariant sheaves
   descend to the same sheaf $N$ on $X$.
   It follows from $k(M)=K$ that $M$ is absolutely indecomposable.
   It follows from the condition $\si_*M\iso M_\si\not\iso M$ for
   every $\si\ne1$ that $M$ has minimal field of definition $K$.\\
6) Let $\bigoplus_{\si\in\Gal(K/k)}\si_* M$ descends to the sheaf $N$ over $X$.
	If $N'$ is a direct summand of $N$ than the set of isomorphism
	classes of indecomposable summands of $N'_K$ contains all
	$\si_*M$, $\si\in\Gal(K/k)$. They are pairwise non-isomorphic
	and therefore $N'_K=N_K$ and $N'=N$. This implies that $N$ is
	indecomposable. We have
	$$k(N_K)=k\big(\bigoplus_{\si\in\Gal(K/k)}\si_* M\big)=\prod_{\si\in\Gal(K/k)}k(M).$$
	This implies that $\dim_kk(N)=\dim_kk(M)$ and therefore $k(N)=k(M)$.
\end{proof}

\subsection{Stable bundles}\label{subsec:stable}
Let $X$ be a curve over a finite field $k$.
Define the slope of $\al=(n,d)\in\cN^*\xx\cZ$
to be $\mu(\al)=d/n$. For any vector bundle $M$ on $X$
(we identify vector bundles with locally free sheaves), define
its slope $\mu(M)=\mu(\ch M)$.

\begin{dfn}
We say that a bundle $M$ on $X$ is semistable (respectively, stable)
if, for any proper nonzero subbundle $N\sb M$, 
we have $\mu(N)\le\mu(M)$ (respectively, $\mu(N)<\mu(M)$). 
We say that a stable bundle $M$ is absolutely stable
if it stays stable after any finite field extension of $k$.
A semistable bundle is called polystable if it is a direct sum
of stable bundles.
For any $\al\in\cN^*\xx\cZ$, let $\lE_X(\al)$ be the set
of representatives of isomorphism classes of vector bundles
over $X$ having character \al.
Let $\lE_X\sst(\al)$ (respectively, $\lE_X\st(\al)$) be the subset of $\lE_X(\al)$ 
consisting of semistable (respectively, stable) bundles.
For any $\mu\in\cQ$, we define 
$$\lE_X\sst(\mu)=\bigcup_{\mu(\al)=\mu}\lE_X\sst(\al),\qquad
\lE_X\st(\mu)=\bigcup_{\mu(\al)=\mu}\lE_X\st(\al).$$
For any finite field extension $K/k$, we define
the sets $\lE_{X_{K}}(\al)$, $\lE_{X_{K}}\sst(\al)$, 
$\lE_{X_{K}}\st(\al)$, $\lE_{X_{K}}\sst(\mu)$,
$\lE_{X_{K}}\st(\mu)$
in the same way as above.
\end{dfn}

It is known that, for any $\al\in\cN^*\xx\cZ$, the set $\lE_X\sst(\al)$
is finite.
For any $\mu\in\cQ$, we define the category $\Coh_\mu X$
to be the subcategory of $\Coh X$ consisting of
semistable bundles having slope \mu. This is an abelian
category whose simple objects are precisely the stable bundles from 
$\lE_X\sst(\mu)$.
For any finite field extension $K/k$, define
$a_{\al}(K)$ to be the number of absolutely stable bundles in
$\lE\st_{X_{K}}(\al)$. For $r\ge1$, define 
$s_{\al,r}(K)$ to be the number of bundles in
$\lE\st_{X_K}(\al)$ with $\dim_K\End M=r$.
 
\begin{lmm}
\begin{sloppypar}
A stable bundle $M$ over $X$ is absolutely stable if and only if 
$\End M=~k$.
\end{sloppypar}
\end{lmm}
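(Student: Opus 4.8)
The plan is to translate both implications into the single numerical identity $\dim_k\End M=1$ and to extract the geometric content by base-changing to $\overline k$. First I would recall that, since $M$ is stable, it is a simple object of the abelian category $\Coh_\mu X$ with $\mu=\mu(M)$, so by Schur's lemma $\End M$ is a finite-dimensional division algebra over the finite field $k$; Wedderburn's little theorem then makes it a field $L$. Because $X$ is proper and any field extension is flat, $\End(M_{\overline k})\iso\End M\ts_k\overline k$, so that $\dim_{\overline k}\End(M_{\overline k})=\dim_k L$; in particular $\End M=k$ is equivalent to $\dim_{\overline k}\End(M_{\overline k})=1$.

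For the forward implication, if $M$ is absolutely stable then $M_{\overline k}$ is stable, since any proper subbundle of the same slope over $\overline k$ is already defined over a finite extension and would destabilise some $M_K$. A stable bundle over the algebraically closed field $\overline k$ has endomorphism ring $\overline k$ (a finite-dimensional division algebra over $\overline k$), so $\dim_k L=1$ and $\End M=k$.

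The substance is in the converse. Here I would first note that $M_{\overline k}$ is semistable (semistability is preserved under field extension, the Harder--Narasimhan filtration being canonical and hence descending), and then show it is in fact polystable. For this I would use that the socle of $M_{\overline k}$ in $\Coh_\mu X_{\overline k}$ is a canonical, hence Galois-stable, subobject; it is defined over some finite Galois extension $K/k$, and the Galois-descent equivalence $\Coh X\iso\Coh_\Ga X_K$ carries it to a nonzero subobject of $M$. As $M$ is simple this subobject must be all of $M$, forcing $\soc(M_{\overline k})=M_{\overline k}$. Writing $M_{\overline k}\iso\bigoplus_j T_j^{n_j}$ with the $T_j$ pairwise non-isomorphic stable bundles, we get $\Hom(T_i,T_j)=0$ for $i\ne j$ and $\End T_j=\overline k$, so $\End(M_{\overline k})\iso\prod_j M_{n_j}(\overline k)$ has dimension $\sum_j n_j^2$. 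If $\End M=k$ this dimension is $1$, which forces a single summand with $n_1=1$; thus $M_{\overline k}=T_1$ is stable, and the same argument (any destabilising subbundle is defined over a finite extension) shows every $M_K$ is stable, i.e.\ $M$ is absolutely stable.

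The main obstacle is exactly the polystability of $M_{\overline k}$: without it one cannot exclude an indecomposable strictly semistable base change whose endomorphism ring is still $\overline k$ (for instance a non-split extension of one stable bundle by a non-isomorphic stable bundle of equal slope), and it is here that one genuinely uses that $k$ is perfect, so that Galois descent of the socle applies.
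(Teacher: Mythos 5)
Your proof is correct and follows essentially the same route as the paper's: polystability of the base-changed bundle via Galois-stability of the socle in $\Coh_\mu$, followed by a base-change and endomorphism-ring computation. The only real difference is cosmetic --- you pass to $\overline k$ and count $\dim\End(M_{\overline k})=\sum_j n_j^2$, whereas the paper works directly with finite extensions $K/k$ and, for the forward implication, simply invokes absolute indecomposability to get $\End M=k(M)=k$.
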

\begin{proof}
If $M\in\Coh X$ is stable then, for any finite extension
$K/k$, the sheaf $M_{K}$ is polystable. Indeed,
the relative socle (in the category $\Coh_\mu X$, $\mu=\mu(M)$)
of $M_{K}$ is $\Gal(K/k)$-stable and determines therefore
some $N\sb M$ with $\mu(N)=\mu$. It follows from stability of $M$ 
that $N=M$ and therefore the socle of $M_{K}$ is the whole
$M_{K}$. This implies that $M_K$ is polystable. If $\End M=k$
then $\End M_K=K$ and therefore $M_K$ is stable.
On the other hand, if $M$ is absolutely stable 
then it is absolutely indecomposable and we have $\End(M)=k(M)=k$.
\end{proof}

It follows that $a_\al(K)=s_{\al,1}(K)$ for any finite field extension $K/k$.
We will prove a more strong result now.

\begin{prp}\label{prp:stable corresp}
Let $K/k$ be a finite field extension and let $\Ga=\Gal(K/k)$.
For any absolutely stable sheaf $M$ over $X_K$ 
with the minimal field of definition $K$, the sheaf
$\bigoplus_{\si\in\Ga}\si_*M$ descends to a
stable sheaf over $X$ having
endomorphism ring isomorphic to $K$. The induced 
map between the corresponding sets is a quotient map 
with respect to the free action of $\Ga$ 
on the first set.
\end{prp}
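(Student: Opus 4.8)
The plan is to deduce the proposition from Proposition~\ref{prp:indecomp} and the two descent lemmas preceding it. First I would record the structure of $M$. Since $M$ is absolutely stable over $X_K$, the lemma characterizing absolute stability gives $\End M=K$, whence $k(M)=\End M/\rad(\End M)=K$, and Proposition~\ref{prp:indecomp}(2) shows $M$ is absolutely indecomposable. As $M$ has minimal field of definition $K$, Proposition~\ref{prp:indecomp}(4) shows that the subgroup $\Ga_M=\sets{\si\in\Ga}{\si_*M\iso M}$ has fixed field $K^{\Ga_M}$ equal to $K$ and is therefore trivial; consequently $\si_*M\iso\tau_*M$ forces $\si=\tau$, so the sheaves $\si_*M$, $\si\in\Ga$, are pairwise non-isomorphic. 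Now Proposition~\ref{prp:indecomp}(6) applies directly: $\bigoplus_{\si\in\Ga}\si_*M$ descends to an indecomposable sheaf $N$ over $X$ with $N_K\iso\bigoplus_{\si\in\Ga}\si_*M$ and $k(N)=k(M)=K$. Each $\si_*$ is an autoequivalence of $\Coh X_K$ preserving rank and degree, hence slope and stability, so the summands $\si_*M$ are stable of one common slope $\mu=\mu(M)$; thus $N_K$ is polystable and the above is precisely its decomposition into the pairwise non-isomorphic simple objects of $\Coh_\mu X_K$.

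The heart of the argument, and the step I expect to be the main obstacle, is that $N$ is stable, i.e.\ a simple object of $\Coh_\mu X$. First $N$ is semistable of slope $\mu$: a subbundle of $N$ of strictly larger slope would base-change to one destabilizing $N_K$, contradicting semistability of $N_K$; so $N$ is a nonzero object of $\Coh_\mu X$. Let $N'\sbe N$ be a subobject in $\Coh_\mu X$. Base change gives a $\Ga$-equivariant subobject $N'_K\sbe N_K$, and since $N_K$ is semisimple with pairwise non-isomorphic simple summands $\si_*M$, necessarily $N'_K=\bigoplus_{\si\in S}\si_*M$ for a unique $S\sbe\Ga$. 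The canonical $\Ga$-equivariant structure on $N_K$ permutes these summands by $\si\mto\tau\si$, because $\tau_*(\si_*M)=(\tau\si)_*M$; hence $\Ga$-invariance of $N'_K$ forces $S$ to be stable under left multiplication by all of $\Ga$. Left multiplication being transitive, $S$ is $\emptyset$ or $\Ga$, so $N'$ is $0$ or $N$. Thus $N$ is simple in $\Coh_\mu X$, i.e.\ stable.

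Once $N$ is stable the endomorphism ring is immediate: $\End N$ is a division ring, finite over the finite field $k$, hence a field, and therefore $\End N=k(N)=K$ by the first paragraph. (As a check, $\End N\ts_kK\iso\End(N_K)\iso\prod_{\si\in\Ga}\End(\si_*M)=K^{\#\Ga}$, so $\dim_k\End N=[K:k]$.) This establishes the first assertion.

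For the second assertion I would let $\Ga$ act on the first set, the absolutely stable sheaves over $X_K$ with minimal field of definition $K$, by $\si\cdot M=\si_*M$; this is well defined since $\si_*$ preserves absolute stability and minimal fields of definition, and $M\mto N$ is $\Ga$-invariant by reindexing the sum. The action is free by Proposition~\ref{prp:indecomp}(4), as the stabilizer of $M$ is trivial. The fibers of $M\mto N$ are exactly the $\Ga$-orbits: the descent $N$ is unique up to isomorphism by the lemma that $N_K\iso N'_K$ implies $N\iso N'$, while from $N$ one recovers $\set{\si_*M}$ as the simple summands of $N_K$, hence the orbit of $M$. Finally the map is onto the stable sheaves over $X$ with endomorphism ring $K$: given such an $N$, its splitting field is $k(N)=K$ by Proposition~\ref{prp:indecomp}(3), so Proposition~\ref{prp:indecomp}(5) writes $N_K\iso\bigoplus_{\si\in\Ga}\si_*M$ with $M$ absolutely indecomposable of minimal field of definition $K$; being a stable summand of the polystable $N_K$ with $\End M=K$, this $M$ is absolutely stable and maps to $N$. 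Hence $M\mto N$ identifies the second set with the quotient of the first by the free $\Ga$-action, as claimed.
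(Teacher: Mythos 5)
Your proof is correct and follows essentially the same route as the paper: descent and $k(N)=k(M)=K$ via Proposition~\ref{prp:indecomp}, stability of $N$ by showing a $\Ga$-equivariant subobject of the polystable $N_K$ with pairwise non-isomorphic simple summands $\si_*M$ must be $0$ or everything (the paper phrases this via projections $U_K\ar\si_*M$, you via transitivity of left multiplication on the summands --- the same mechanism), and surjectivity and the orbit description via parts (3)--(5) and uniqueness of descent. Your treatment is somewhat more detailed than the paper's, e.g.\ in making freeness explicit through Proposition~\ref{prp:indecomp}(4), but it is not a different argument.
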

\begin{proof}
All the sheaves $\si_*M$ are stable and therefore 
$\bigoplus_{\si\in\Ga}\si_*M$ 
is semistable, as also its descend $N$ over $X$.
Assume that there exists some nonzero $U\sb N$ with $\mu(U)=\mu(N)$.
Then $U$ is semistable as also $U_K\sb\bigoplus_{\si\in\Ga}\si_*M$. 
Any projection $U_K\ar\si_*M$ is either surjective or zero. 
One of them is surjective
and the inclusion $U_K\emb\bigoplus_{\si\in\Ga}\si_*M$ is $\Ga$-equivariant.
This implies that $U_K=\bigoplus_{\si\in\Ga}\si_*M$ and $U=N$. 
Thus, $N$ is stable. It follows from Proposition \ref{prp:indecomp}
that $\End(N)=k(N)=k(M)=K$.

Assume now that $N$ is some stable sheaf over $X$ with $\End(N)=K$. 
Then $N$ is indecomposable and its splitting field
equals $k(N)=\End(N)=K$. Applying Proposition \ref{prp:indecomp}
we can find an absolutely indecomposable sheaf $M$ with
a minimal field of definition $K$ such that 
$N_K\iso\bigoplus_{\si\in\Ga}\si_*M$.
As $N$ is stable, the sheaf $N_K$ is polystable. It follows that
$M$ is stable and therefore absolutely stable.
It is clear that if $M'$ is a different absolutely stable
sheaf over $X_K$ such that $N_K\iso\bigoplus_{\si\in\Ga}\si_*M'$
then $M'\iso\si_*M$ for some $\si\in\Ga$. This implies the last assertion
of the proposition.
\end{proof}

\begin{rmr}
If $\bigoplus_{\si\in\Ga}\si_*M$ descends to the sheaf $N$ then
$\ch N=[K:k]\ch M$. It follows from the proposition that $s_{\al,r}(k)\ne0$
implies $\al/r\in\cZ^2$.
\end{rmr}

\begin{lmm}\label{lmm:a and s}
Let $\cF_q/k$ be a finite field extension, $\al\in\cN^*\xx\cZ$ and $n\ge1$.
Then we have
$$a_{\al}(\cF_{q^n})=\sum_{r\mid n}rs_{r\al,r}(\cF_q).$$
\end{lmm}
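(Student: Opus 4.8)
The plan is to work throughout with $\cF_q$ as the base field: every preliminary result of Section \ref{sec:prelim} holds over an arbitrary finite field, and both $a_\al$ and $s_{\al,r}$ are intrinsic to the relevant base-changed curve (``absolutely stable'' means $\End M$ equals its field of definition, by the Lemma preceding Proposition \ref{prp:stable corresp}), so nothing is lost by replacing $k$ with $\cF_q$. The intermediate fields of $\cF_{q^n}/\cF_q$ are exactly the $\cF_{q^r}$ with $r\mid n$. First I would rewrite $s_{r\al,r}$: if $N$ is stable over $X_{\cF_q}$ then $\End N$ is a finite-dimensional division algebra over $\cF_q$, hence a field, so $\dim_{\cF_q}\End N=r$ is the same as $\End N\iso\cF_{q^r}$. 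Thus $s_{r\al,r}(\cF_q)$ counts stable $N$ over $X_{\cF_q}$ with $\ch N=r\al$ and $\End N\iso\cF_{q^r}$.

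Next I would apply Proposition \ref{prp:stable corresp} with $K=\cF_{q^r}$ and $\Ga=\Gal(\cF_{q^r}/\cF_q)$, which has order $r$. It presents these $N$ as the quotient, under the free $\Ga$-action $M\mto\si_*M$, of the set of absolutely stable $M$ over $X_{\cF_{q^r}}$ having minimal field of definition $\cF_{q^r}$. Since $\si_*$ preserves rank and degree, a whole orbit shares the character $\ch M$, while the descent $N$ of $\bigoplus_{\si\in\Ga}\si_*M$ satisfies $\ch N=r\,\ch M$; hence the correspondence is character-graded, matching $\ch M=\al$ with $\ch N=r\al$. Freeness makes every orbit have $r$ elements, so for each $r\mid n$,
\[
r\,s_{r\al,r}(\cF_q)=\#\sets{M\in\lE\st_{X_{\cF_{q^r}}}(\al)}{M\ \text{absolutely stable with minimal field of definition}\ \cF_{q^r}}.
\]

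Finally I would partition the set counted by $a_\al(\cF_{q^n})$ by the minimal field of definition over $\cF_q$. An absolutely stable $M'$ over $X_{\cF_{q^n}}$ is absolutely indecomposable, so by Proposition \ref{prp:indecomp}(4) it has a unique minimal field of definition, necessarily $\cF_{q^r}$ for a unique $r\mid n$. I claim base change $M\mto M_{\cF_{q^n}}$ is a bijection from the absolutely stable $M$ over $X_{\cF_{q^r}}$ with $\ch M=\al$ and minimal field of definition $\cF_{q^r}$ onto the absolutely stable $M'$ over $X_{\cF_{q^n}}$ with $\ch M'=\al$ and minimal field of definition $\cF_{q^r}$. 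Base change preserves the character and, because $\End M=\cF_{q^r}$ gives $\End(M_{\cF_{q^n}})=\cF_{q^r}\ts_{\cF_{q^r}}\cF_{q^n}=\cF_{q^n}$ and any destabilizing subbundle base changes to one upstairs, it preserves absolute stability; the inverse is descent of $M'$ to its minimal field of definition. Summing the displayed identity over $r\mid n$ and using this partition gives $\sum_{r\mid n}r\,s_{r\al,r}(\cF_q)=a_\al(\cF_{q^n})$.

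The step I expect to be the main obstacle is verifying that the minimal field of definition is preserved under this base change in both directions, so that an $M'$ with minimal field of definition $\cF_{q^r}$ arises only from an $M$ whose own minimal field of definition is $\cF_{q^r}$, and not from a proper subfield. This rests on the uniqueness in Proposition \ref{prp:indecomp}(4) together with the descent lemma above (that $M_K\iso N_K$ forces $M\iso N$), which makes base change and Galois descent mutually inverse on these sheaves; the accompanying slope and endomorphism-ring bookkeeping is then routine.
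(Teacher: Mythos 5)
Your proof is correct and follows essentially the same route as the paper's: reduce to $k=\cF_q$, stratify the absolutely stable sheaves over $X_{\cF_{q^n}}$ by their minimal field of definition $\cF_{q^r}$ with $r\mid n$, and use Proposition \ref{prp:stable corresp} (the free $\Gal(\cF_{q^r}/\cF_q)$-action together with $\ch N=r\,\ch M$ and $\End N\iso\cF_{q^r}$) to identify each stratum's size as $rs_{r\al,r}(\cF_q)$. The base-change/descent bijection you single out as the main obstacle is exactly the step the paper dismisses as ``obvious,'' and your verification of it via the uniqueness in Proposition \ref{prp:indecomp}(4) and the lemma that $M_K\iso N_K$ forces $M\iso N$ is sound.
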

\begin{proof}
Without loss of generality, we may assume that $k=\cF_q$.
Any absolutely stable sheaf over $\cF_{q^n}$ has the minimal
field of definition of the form $\cF_{q^r}$ with $r\mid n$.
It follows from Proposition \ref{prp:stable corresp} that the number
of absolutely stable sheaves over $\cF_{q^r}$ having character
$\al$ and minimal field of definition $\cF_{q^r}$
equals $rs_{r\al,r}(\cF_q)$. Now the statement of the lemma
is obvious.
\end{proof}

The goal of this paper is to determine the numbers $a_\al(\cF_{q})$
and the Poincar\'e polynomials of the corresponding moduli spaces. 

\subsection{Hall algebra}
Let $X$ be a smooth projective curve over a finite field $k=\cF_q$.
Let \lH be the Hall algebra \cite{Ringel1} of the category $\Coh X$ and let $\lH_+$
be its subalgebra generated by the sheaves having nonnegative degree.
We endow $\lH_+$ with an $\cN$-grading, where an isomorphism class
$[M]$ has degree $\rk M+\deg M$. 
Let $\what\lH_+$ be the corresponding completion. 
Let us endow the algebra $\cQ[x_1,x_2]$ with a new product
given by the formula
$$x^\al\circ x^\be=q^{-\ang{\al,\be}}x^{\al+\be}.$$
This new algebra is denoted by $\cQ\tw_q[x_1,x_2]$ or 
$\cQ\tw_{\cF_q}[x_1,x_2]$.

\begin{lmm}[see {\cite[Lemma 3.3]{Rei1}}]\label{lmm:integral}
The map $\int:\lH_+\ar\cQ\tw_q[x_1,x_2]$ given by
$$[M]\mto \frac1{\#\Aut M}x^{\ch M}$$
is a homomorphism of algebras. It induces a homomorphism
of completions \br $\int:\what\lH_+\ar\cQ\tw_q\pser{x_1,x_2}$.
\end{lmm}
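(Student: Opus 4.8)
The plan is to verify that $\int$ is multiplicative on the basis $\set{[M]}$ of $\lH_+$ and then extend the result to the completions by continuity. Recall that the Hall product is $[M]\cdot[N]=\sum_{[L]}g^L_{MN}[L]$, where $g^L_{MN}$ is the number of subsheaves $U\sb L$ with $U\iso N$ and $L/U\iso M$. Since $\ch$ is additive on short exact sequences, every $L$ with $g^L_{MN}\ne0$ satisfies $\ch L=\ch M+\ch N$, so the whole product is supported on the single monomial $x^{\ch M+\ch N}$. Therefore
$$\int([M]\cdot[N])=\Big(\sum_{[L]}\frac{g^L_{MN}}{\#\Aut L}\Big)x^{\ch M+\ch N},$$
whereas the definition of $\circ$ gives $\int[M]\circ\int[N]=\frac{q^{-\ang{\ch M,\ch N}}}{\#\Aut M\,\#\Aut N}\,x^{\ch M+\ch N}$. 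The lemma thus reduces to the numerical identity
$$\sum_{[L]}\frac{g^L_{MN}}{\#\Aut L}=\frac{q^{-\ang{\ch M,\ch N}}}{\#\Aut M\,\#\Aut N}.$$

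To establish this I would count short exact sequences $0\ar N\xarr iL\xarr pM\ar0$ with fixed middle term $L$ in two ways. Each subsheaf $U\iso N$ of $L$ with $L/U\iso M$ produces $\#\Aut N\cdot\#\Aut M$ such sequences, so their total number is $g^L_{MN}\,\#\Aut M\,\#\Aut N$. On the other hand $\Aut L$ acts on this set by $\te\cdot(i,p)=(\te i,p\te\inv)$, its orbits being exactly the extension classes in $\Ext^1(M,N)$ with middle term isomorphic to $L$; the stabiliser of a sequence consists of the automorphisms $1+i\psi p$ with $\psi\in\Hom(M,N)$ and so has order $\#\Hom(M,N)$. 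Orbit--stabiliser then yields the Riedtmann--Peng formula $g^L_{MN}=\#\Ext^1(M,N)_L\cdot\#\Aut L/(\#\Hom(M,N)\,\#\Aut M\,\#\Aut N)$. Dividing by $\#\Aut L$ and summing over $[L]$ collapses the partition $\Ext^1(M,N)=\bigsqcup_{[L]}\Ext^1(M,N)_L$ to
$$\sum_{[L]}\frac{g^L_{MN}}{\#\Aut L}=\frac{\#\Ext^1(M,N)}{\#\Hom(M,N)\,\#\Aut M\,\#\Aut N}.$$
Because $X$ is a smooth curve the category $\Coh X$ has homological dimension one, so all these are finite sets and $\hi(M,N)=\dim\Hom(M,N)-\dim\Ext^1(M,N)$. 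The Riemann--Roch identity of Section \ref{subsec:chern} reads $\hi(M,N)=\ang{\ch M,\ch N}$, hence $\#\Ext^1(M,N)/\#\Hom(M,N)=q^{-\ang{\ch M,\ch N}}$ and the identity follows.

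Finally, the $\cN$-grading of $\lH_+$ by $\rk M+\deg M$ is preserved by the Hall product, since $\ch L=\ch M+\ch N$ forces the degrees to add, and the matching total grading on $\cQ\tw_q[x_1,x_2]$ that places $x_1^nx_2^d$ in degree $n+d$ is likewise preserved by $\circ$. Thus $\int$ is a homomorphism of graded algebras and, being continuous for the induced filtrations, extends uniquely to $\int:\what\lH_+\ar\cQ\tw_q\pser{x_1,x_2}$. The only substantive ingredient is the Riedtmann--Peng count; the one point to watch is to match the convention for $g^L_{MN}$ (subobject versus quotient) with the order of factors in $x^\al\circ x^\be$, which merely fixes the placement of the arguments in $\ang{\ch M,\ch N}$ and does not affect the computation.
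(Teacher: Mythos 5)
Your proof is correct, and it is essentially the argument behind the result as the paper uses it: the paper gives no proof of its own but cites \cite[Lemma 3.3]{Rei1}, whose proof is exactly this Riedtmann--Peng count of short exact sequences via the $\Aut L$-action with stabiliser $1+i\psi p$ of order $\#\Hom(M,N)$, combined with $\#\Ext^1(M,N)/\#\Hom(M,N)=q^{-\hi(M,N)}=q^{-\ang{\ch M,\ch N}}$, valid here because $\Coh X$ is hereditary with finite Hom and Ext sets over a finite field. Your closing remarks on the grading and the extension to completions likewise match the intended justification, so there is nothing to add.
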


\section{Solution of the HN-recursion}\label{sec:HN rec}
Let \Ga be a commutative semigroup
equipped with a total preorder $\le$ on \Ga (e.g. a preorder given by
some stability on $\cN^I\ms\set0$, see \cite{MR1}) such that
$$\min\set{\al,\be}\le\al+\be\le\max\set{\al,\be},\qquad \al,\be\in\Ga.$$
We will write $\al<\be$ if $\al\le\be$ but $\be\not\le\al$.

\begin{dfn}
For any sequence $\la=(\la_1,\dots,\la_k)$ of elements in \Ga, 
we define $|\la|=\sum\la_i$ and $l(\la)=k$.
We will say that a sequence $\la=(\la_1,\dots,\la_k)$ is stable if
$\sum_{i=1}^j\la_i<|\la|$ for $1\le j<k$.
For any $\al\in\Ga$, let $\lP_\al$ be the set of sequences $\la$ in \Ga
such that $|\la|=\al$. Let $\lP_\al\st\sb\lP_\al$ be a subset
consisting of stable sequences and let $\lP^i_\al\sb\lP_\al$ be a subset
consisting of strictly increasing sequences.
\end{dfn}
 
The following result is an easy generalization of \cite[Theorem 5.1]{Rei2}.

\begin{thr}
Assume that, for any $\al\in\Ga$, the set $\lP_\al\st$ is finite.
Let $A$ be an associative ring and let $(a_\al)_{\al\in\Ga}$,
$(b_\al)_{\al\in\Ga}$ be two systems of elements in $A$ 
related by the formula
\eq\label{eq:recurs}
b_\al=\sum_{\la\in\lP_\al^i}a_{\la_1}\dots a_{\la_{l(\la)}}.
\endeq
Then we have
\eq\label{eq:rec solution}
a_\al=\sum_{\la\in\lP_\al\st}(-1)^{k-1}b_{\la_1}\dots b_{\la_{l(\la)}}.
\endeq
\end{thr}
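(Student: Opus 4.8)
The plan is to substitute the defining relation \eqref{eq:recurs} into the right-hand side of \eqref{eq:rec solution}, expand everything as a $\cZ$-linear combination of monomials in the $a$'s, and show that every monomial cancels except the one equal to $a_\al$. First I would record the finiteness that makes this legitimate: $\lP_\al^i\sb\lP_\al\st$. Indeed, for a strictly increasing sequence $\la_1<\dots<\la_k$ and $1\le j<k$, repeated use of the hypothesis $\min\set{\ga,\de}\le\ga+\de\le\max\set{\ga,\de}$ gives $\la_1+\dots+\la_j\le\la_j<\la_{j+1}\le\la_{j+1}+\dots+\la_k$, so the two partial sums $P,Q$ satisfy $P<Q$, and the strict form of the hypothesis ($\ga<\de\imp\ga<\ga+\de<\de$, valid for the slope/stability preorders of interest) yields $P<P+Q=|\la|$; hence $\la\in\lP_\al\st$. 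In particular $\lP_\al^i$ is finite, and all sums below are finite. Writing $b_{\la_i}=\sum_{\mu\in\lP_{\la_i}^i}a_{\mu_1}\cdots a_{\mu_{l(\mu)}}$ and multiplying out, the contribution of a fixed $\la\in\lP_\al\st$ is a sum of monomials $a_{\nu_1}\cdots a_{\nu_m}$, one for each concatenation of strictly increasing sequences $\mu^{(1)},\dots,\mu^{(l(\la))}$ with $|\mu^{(i)}|=\la_i$ into a single sequence $\nu$ of total $\al$.

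Since this expansion is obtained purely by substituting \eqref{eq:recurs}, we get $\mathrm{RHS}=\sum_\nu c(\nu)\,a_{\nu_1}\cdots a_{\nu_m}$ with integer coefficients $c(\nu)$, and it suffices to prove $c(\nu)=1$ for $\nu=(\al)$ and $c(\nu)=0$ otherwise. Fix $\nu=(\nu_1,\dots,\nu_m)$ with $|\nu|=\al$ and put $P_p=\nu_1+\dots+\nu_p$, so $P_m=\al$. The monomial $a_{\nu_1}\cdots a_{\nu_m}$ arises exactly from the decompositions of $\nu$ into consecutive blocks such that (a) each block is strictly increasing and (b) the block-sum sequence lies in $\lP_\al\st$, i.e.\ $P_p<\al$ at every internal boundary $p$; a decomposition into $k$ blocks carries the weight $(-1)^{k-1}$ from \eqref{eq:rec solution}. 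Condition (a) says that every \emph{forced} boundary, $p\in\Phi:=\sets{p}{\nu_p\not<\nu_{p+1}}$, must be cut, and condition (b) says a boundary may be used only if $P_p<\al$. Thus an admissible decomposition is the same as a boundary set $B$ with $\Phi\sbe B\sbe\sets{p}{P_p<\al}$; writing $F=\sets{p}{\nu_p<\nu_{p+1}}\cap\sets{p}{P_p<\al}$ for the free admissible boundaries, these are exactly $B=\Phi\sqcup S$ with $S\sbe F$ (and they exist iff every forced boundary already satisfies $P_p<\al$).

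If $F\ne\varnothing$ the coefficient vanishes: since $k-1=|\Phi|+|S|$, summing over $S\sbe F$ gives $c(\nu)=(-1)^{|\Phi|}(1-1)^{|F|}=0$. Equivalently, toggling membership of $\min F$ is a sign-reversing involution on the admissible decompositions. The remaining case is $F=\varnothing$, where the only candidate is the cut at the maximal strictly increasing runs. Here I must show that for $m\ge2$ this cut is never admissible, i.e.\ some forced boundary has $P_p\not<\al$, so that $c(\nu)$ is the empty sum $0$; for $m=1$ there is the single block, giving $c((\al))=1$ and hence the term $a_\al$.

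I expect this degenerate case to be the main obstacle. It is exactly the claim that a sequence of length $\ge2$ cannot simultaneously have $P_p\ge\al$ at every ascent ($\nu_p<\nu_{p+1}$) and $P_p<\al$ at every descent ($\nu_p\not<\nu_{p+1}$) — the counting shadow of the uniqueness of the Harder--Narasimhan filtration. To prove it I would argue by contradiction from the strict mediant relations: since $\al=P_m$ with $P_m$ strictly between $P_{m-1}$ and $\nu_m$, and $\al$ is a strict mediant of the $\nu_i$, the inequalities $\ga<\ga+\de<\de$ squeeze $\al$ between the partial sums in a way incompatible with the putative ascent/descent pattern. One checks directly that the four patterns at the first and last boundaries are already contradictory (as in the cases $m=2,3$), and I would then propagate this by induction on $m$, peeling off the first block; assembling these cancellations gives $c(\nu)=\de_{\nu,(\al)}$ and therefore $a_\al=\sum_{\la\in\lP_\al\st}(-1)^{l(\la)-1}b_{\la_1}\cdots b_{\la_{l(\la)}}$.
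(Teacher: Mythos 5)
Your route is genuinely different from the paper's, and its skeleton is fine: you substitute \eqref{eq:recurs} into the right-hand side of \eqref{eq:rec solution}, whereas the paper inducts on $\#\lP_\al\st$, substitutes the (inductively known) solution into the recursion, and reduces everything to the identity $\sum_{S\in\lA(\la)}(-1)^{\#S}=0$ of \cite[Lemma 5.4]{Rei2} for cuts of a sequence into \emph{stable} blocks with \emph{strictly increasing} block sums. Your mirror bookkeeping is correct: admissible cuts of $\nu$ are exactly the sets $B$ with $\Phi\sbe B\sbe\sets{p}{P_p<\al}$, and the cancellation $(-1)^{|\Phi|}(1-1)^{|F|}=0$ for $F\ne\varnothing$ is right and pleasantly simpler than an involution. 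But the degenerate case $F=\varnothing$, $m\ge2$ --- showing that no sequence can have $P_p\ge\al$ at every ascent and $P_p<\al$ at every descent --- is precisely the mirror of Reineke's lemma, i.e.\ the entire combinatorial content of the theorem (the paper quotes it rather than reproving it), and you leave it unproven. As written, your text is a correct reduction, not a proof.

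Moreover, the plan you sketch for that case would not go through as stated. The ``four patterns at the first and last boundaries'' are \emph{not} individually contradictory: e.g.\ a descent at $p=m-1$ is locally consistent --- the seesaw applied to $P_{m-1}<\al=P_{m-1}+\nu_m$ merely forces $\al<\nu_m$ --- and your own $m=3$ verifications use global partial-sum comparisons, not just the end boundaries. The induction ``peeling off the first block'' also fails: after deleting $\nu_1$ the conditions compare the new partial sums with $\al-\nu_1$, and these are unrelated to the old comparisons of $P_p$ with $\al$. A workable argument compares tail sums at the extremal ascent/descent instead: if $m-1$ is a descent, the seesaw gives $\nu_m>\al$; letting $p_0$ be the last ascent (the all-descent case is immediate, since then $P_{m-1}\ge\nu_m$ forces $\al\le P_{m-1}$), the condition $P_{p_0}\ge\al$ forces the tail sum $Q=\nu_{p_0+1}+\dots+\nu_m$ to satisfy $Q\le\al$, while the tail is weakly decreasing with every term $\ge\nu_m$, so $Q\ge\nu_m>\al$ --- contradiction; the case where $m-1$ is an ascent is dual, via the last descent. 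Note finally that this case, like your inclusion $\lP_\al^i\sbe\lP_\al\st$ (which also guarantees the sums in \eqref{eq:recurs} are finite), needs the strict seesaw property $\ga<\de\Rightarrow\ga<\ga+\de<\de$, which you invoke but which is strictly stronger than the stated hypothesis $\min\set{\ga,\de}\le\ga+\de\le\max\set{\ga,\de}$; the paper's proof inherits the same implicit strengthening from the setting of \cite{Rei2}, so you are in good company, but you should make it an explicit assumption rather than an aside.
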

\begin{proof}
We use the induction on the number of elements in $\lP\st_\al$.
Given a sequence $\la=(\la_1,\dots,\la_k)$, we say that
a set of positive integers 
$$S=(s_1<\dots<s_{r-1}<s_r=k)$$ 
is $\la$-admissible
if the sequences $\mu^i=(\la_{s_{i-1}+1},\dots,\la_{s_i})$, $i=1,\dots,r$ 
(we set $s_0=0$) are stable and $|\mu^1|<\dots<|\mu^r|$.
If there exists any \la-admissible set, then \la is stable. 
Let $\lA(\la)$ be the set of all $\la$-admissible sets.
For any sequence $\la=(\la_1,\dots,\la_k)$,
we define $a_\la=a_{\la_1}\dots a_{\la_k}$ and 
$b_\la=b_{\la_1}\dots b_{\la_k}$.
Then
\begin{align*}
a_\al+\sum_{\la\in\lP^s_\al}(-1)^{l(\la)}b_\la
&=a_\al-b_\al+\sum_{\la\in\lP^s_\al,l(\la)>1}(-1)^{l(\la)}b_\la\\
&=-\sum_{\mu\in\lP^i_\al,l(\mu)>1}a_\mu
	+\sum_{\la\in\lP^s_\al,l(\la)>1}(-1)^{l(\la)}b_\la\\
&=-\sum_{\la\in\lP_\al}\sum_{\stackrel{S\in\lA(\la)}{\#S>1}}(-1)^{l(\la)-\#S}b_\la
	+\sum_{\la\in\lP^s_\al,l(\la)>1}(-1)^{l(\la)}b_\la\\
&=\sum_{\la\in\lP_\al,l(\la)>1}(-1)^{l(\la)-1}b_\la
  \sum_{S\in\lA(\la)}(-1)^{\#S}.
\end{align*}
It is proved in \cite[Lemma 5.4]{Rei2} that for any sequence \la of length $l(\la)>1$
$$\sum_{S\in \lA(\la)}(-1)^{\#S}=0.$$
This implies the theorem.
\end{proof}

The case that we are actually interested in in this paper is 
$\Ga=\cN^*\xx\cZ$ with the preorder given by $\al\le\be$ if $\mu(\al)\le\mu(\be)$.
Unfortunately, the above theorem can not be applied in this
situation, as $\lP_\al\st$ (and also $\lP_\al^i$) 
are not finite sets. Even in the complete topological
rings, the convergence of the sums \eqref{eq:recurs} do not
imply the convergence of the sums \eqref{eq:rec solution}.
Still, under certain conditions,
there exists a nice solution of the recursive formula due to Zagier.
Given a topological space $A$
and a system of elements $(f_\la)_{\la\in\lP_\al^i}$,
we say that the sum 
$\sum_{\la\in\lP_\al^i}f_\la$ converges if there exists a limit 
$$\lim_{n\to\infty}\sum_{\stackrel{\la\in\lP_\al^i}{\mu(\la_{l(\la)})<n}}f_\la.$$

%Instead of summing over all stable partitions
%of $\al=(n,d)\in\Ga$ in the formula \eqref{eq:recurs}, we can 
%sum over the tuples $n_*=(n_1,\dots,n_k)$ 
%of positive integers such that $\sum n_i=n$. Then, for any such tuple,
%we sum over the set $D(n_*,d)$ of all $k$-tuples of integers
%$(d_1,\dots,d_k)$ such that $\sum d_i=d$ and 
%$((n_1,d_1),\dots,(n_k,d_k))$ is stable.

\begin{thr}[Zagier, \protect{\cite[Theorem 3]{Zagier1}}]\label{thr:zagier formula}
Let $A$ be a complete normed ring and let $t\in A$ be an invertible 
element with $|t|<1$. Let $(a_\al)_{\al\in\Ga}$ and
$(b_n)_{n\ge1}$ be two systems of elements in $A$ 
related by
$$b_n=\sum_{\la\in\lP_\al^i}t^{\sum_{i<j}\ang{\la_i,\la_j}}
a_{\la_1}\dots a_{\la_{l(\la)}},\qquad\al=(n,d)\in\Ga,$$
where the pairing $\ang{\cdot\,,\cdot}$ on $\cZ^2$ is given by
$\ang{(n,d),(n',d')}=nd'-dn'$. Then
$$a_\al=\sum_{\stackrel{n_1,\dots,n_k>0}{n_1+\dots+n_k=n}}
\Phi_{n_*,d}(t)b_{n_1}\dots b_{n_k},\qquad \al=(n,d)\in\Ga,$$
where $n_*=(n_1,\dots,n_k)$,
$$\Phi_{n_*,d}(t)=(-1)^{k-1}\prod_{i=1}^{k-1}\frac
{t^{(n_i+n_{i+1})\set{(n_1+\dots+n_i)d/n}_+}}
{1-t^{n_i+n_{i+1}}},$$
and $\set{x}_+=\max\set{x-\lfloor x\rfloor,1}$.
\end{thr}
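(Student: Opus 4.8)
The plan is to view the weighted recursion as a special case of the inversion already proved in the preceding theorem, and then to collapse the resulting alternating sum into $\Phi_{n_*,d}(t)$ by summing geometric series over the degrees. First I would absorb the weight $t^{\sum_{i<j}\ang{\la_i,\la_j}}$ into the multiplication: since $\ang{\cdot\,,\cdot}$ is bilinear it is a $2$-cocycle, so the ordered twisted product $b_{\la_1}\cdots b_{\la_k}\mapsto t^{\sum_{i<j}\ang{\la_i,\la_j}}b_{\la_1}\cdots b_{\la_k}$ is associative (this is the quantum-torus twist of the Hall-algebra section). In this twisted ring the hypothesis is exactly relation \eqref{eq:recurs}, with $b_\al$ depending only on $\rk\al$, so \eqref{eq:rec solution} would give
\[ a_\al=\sum_{\la\in\lP_\al\st}(-1)^{l(\la)-1}\,t^{\sum_{i<j}\ang{\la_i,\la_j}}\,b_{n_1}\cdots b_{n_k}, \]
a sequence $\la=((n_1,d_1),\dots,(n_k,d_k))$ with $|\la|=\al=(n,d)$ being stable exactly when every partial sum has slope strictly below $\mu(\al)$.

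The decisive step is then the resummation. Fixing the rank vector $n_*=(n_1,\dots,n_k)$ and summing over the admissible degree vectors, I would set $N_j=n_1+\dots+n_j$ and $\de_j=N_j\mu(\al)-(d_1+\dots+d_j)$. A short telescoping calculation rewrites the exponent as
\[ \sum_{i<j}\ang{\la_i,\la_j}=\sum_{j=1}^{k-1}(n_j+n_{j+1})\de_j, \]
and the stability condition becomes simply $\de_j>0$ for $1\le j\le k-1$, each $\de_j$ running independently through the positive elements of the coset $N_j\mu(\al)+\cZ$. Because $|t|<1$, the geometric series in each $\de_j$ sums to $t^{(n_j+n_{j+1})\{N_j\mu(\al)\}_+}/(1-t^{n_j+n_{j+1}})$, where $\{x\}_+$ is the least strictly positive representative of $x+\cZ$ (equal to $1$ when $x\in\cZ$; this is the source of the nonstandard fractional part, and records the strictness of the slope inequality). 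Restoring the sign $(-1)^{k-1}$ recovers $\Phi_{n_*,d}(t)$, and summing over all rank vectors $n_*$ of $n$ yields the asserted formula.

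The main obstacle is that $\lP_\al\st$ — and already $\lP_\al^i$ — is infinite, so the preceding theorem cannot be quoted verbatim: both \eqref{eq:recurs} and \eqref{eq:rec solution} are honest infinite series. Before anything else I would therefore use the exponent identity above to check that $|t|<1$ makes every series absolutely convergent, since the weight of a sequence decays geometrically as any $\de_j\to\infty$ while only finitely many sequences keep all $\de_j$ below a given bound. Granting absolute convergence, the purely combinatorial cancellation underlying the earlier proof — the vanishing $\sum_{S\in\lA(\la)}(-1)^{\#S}=0$ — applies term by term and the regrouping into a product over $n_*$ is justified by unconditional rearrangement. Thus the genuine content beyond bookkeeping is precisely the convergence estimate together with the telescoping identity for the exponent; the inversion itself then follows by rerunning the combinatorial argument of the earlier theorem, now legitimate term by term.
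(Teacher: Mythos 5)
The paper itself contains no proof of this statement---it is imported verbatim from Zagier's paper---so your attempt can only be compared with Zagier's argument, which verifies directly that the explicit kernel $\Phi_{n_*,d}$ inverts the Harder--Narasimhan kernel via a rational-function identity. Your route --- absorb the weight into a twisted (cocycle) multiplication, invoke the inversion \eqref{eq:rec solution} of the finite theorem of Section \ref{sec:HN rec}, then resum over degrees with fixed rank vector --- is genuinely different, and its computational core is correct: writing $N_j=n_1+\dots+n_j$, $e_j=d_1+\dots+d_j$, one has $\sum_{i<j}\ang{\la_i,\la_j}=\sum_{j=1}^{k-1}(N_je_{j+1}-e_jN_{j+1})$, and substituting $e_j=N_j\mu-\de_j$ with $\de_k=0$ and Abel summation gives your identity $\sum_{i<j}\ang{\la_i,\la_j}=\sum_{j=1}^{k-1}(n_j+n_{j+1})\de_j$; stability is exactly $\de_j>0$ with each $\de_j$ ranging over a $\cZ$-coset, and the geometric series produce $t^{(n_j+n_{j+1})\set{N_jd/n}_+}/(1-t^{n_j+n_{j+1}})$. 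Your reading of $\set{x}_+$ as the representative of $x+\cZ$ in the interval $(0,1]$ is also the intended one: the displayed formula $\max\set{x-\lfloor x\rfloor,1}$ is identically $1$ and is evidently a typo, and your interpretation is the one forced both by the convergence computation and by Remark \ref{rmr:zagier formula}.

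There is, however, a genuine gap in how you justify the inversion. First, ``rerunning the combinatorial argument of the earlier theorem term by term'' is not available as stated: that proof is an induction on the cardinality of $\lP\st_\al$, which is infinite here, so it must be restructured, e.g.\ as an induction on the rank $n$, using that every entry of an increasing sequence of length $>1$ has rank $<n$. Second, and more seriously, your blanket claim that $|t|<1$ makes ``every series absolutely convergent'' is unjustified on the $a$-side: the hypothesis provides no bound on $\norm{a_{(m,e)}}$ as $e$ varies, so the defining series \eqref{eq:recurs}, and the intermediate sums $\sum_{\mu\in\lP^i_\al,\,l(\mu)>1}t^{E(\mu)}a_{\mu_1}\cdots a_{\mu_{l(\mu)}}$ into which you substitute, are a priori only conditionally convergent in the prescribed order; your geometric-decay estimate covers only the $b$-side, where just the finitely many elements $b_1,\dots,b_n$ occur with bounded norms. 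The repair, consistent with your plan but absent from it, is: induct on rank, replace each lower-rank $a_{\mu_i}$ by its already-established stable $b$-expansion (whose coefficients $\Phi_{m_*,e}(t)$ are bounded uniformly in the degree $e$, since $\set{N_je/m}_+$ takes finitely many values), and then use the key observation --- implicit in the Section \ref{sec:HN rec} proof, where it is noted that a sequence admitting an admissible set is stable --- that every composite sequence so produced is itself stable, hence has all $\de_j>0$. Only after this is the regrouped double family a geometrically weighted family of $b$-products, absolutely summable, and only then is the term-by-term appeal to $\sum_{S\in\lA(\la)}(-1)^{\#S}=0$ and the rearrangement into a product over $n_*$ legitimate. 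Without this restructuring you fall into exactly the trap the paper warns about: convergence of the sums \eqref{eq:recurs} does not imply convergence of the sums \eqref{eq:rec solution}.
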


\begin{rmr}\label{rmr:zagier formula}
Assume that $A$ is a commutative complete normed ring,
$t\in A$ is invertible with $|t|>1$, 
and $(a_\al)_{\al\in\Ga}$,
$(b_n)_{n\ge1}$ are two systems of elements in $A$
satisfying
$$b_n=\sum_{\la\in\lP_\al^i}t^{-\sum_{i<j}\ang{\la_i,\la_j}}
a_{\la_1}\dots a_{\la_{l(\la)}},\quad\al=(n,d)\in\Ga.$$
Then
$$a_\al=\sum_{\stackrel{n_1,\dots,n_k>0}{n_1+\dots+n_k=n}}
\Psi_{n_*,d}(t)b_{n_1}\dots b_{n_k},\quad \al=(n,d)\in\Ga,$$
where $n_*=(n_1,\dots,n_k)$, 
$$\Psi_{n_*,d}(t)=\prod_{i=1}^{k-1}\frac
{t^{(n_i+n_{i+1})\set{(n_1+\dots+n_i)d/n}}}
{1-t^{n_i+n_{i+1}}},$$
\begin{sloppypar}\noindent
and $\set{x}=x-\lfloor x\rfloor$.
This follows from the above theorem and the equality 
$\Phi_{n_*,d}(t\inv)=\Psi_{\ub n_*,d}(t)$, where 
$\ub n_*=(n_k,\dots,n_1)$.
\end{sloppypar}
\end{rmr}

%\begin{thr}[Zagier \cite{}]
%We have
%$$Q_{n_*,d}(t)=\sum_{d_*\in D(n_*,d)}t^{\sum_{i<j}(d_jn_i-d_in_j)}
%=\prod_{i=1}^{k-1}\frac
%{t^{(n_i+n_{i+1})\set{(n_1+\dots+n_i)d/n}_+}}
%{1-t^{n_i+n_{i+1}}}$$
%in $\cQ\lser t$, where $\set{a}_+=\max\set{a-\lfloor a\rfloor,1}$.
%\end{thr}
%
%
%\begin{rmr}
%For $n_*=(n_1,\dots,n_k)$, we have
%$$Q_{n_*,d}(t\inv)=(-1)^{k-1}\prod_{i=1}^{k-1}\frac
%{t^{(n_i+n_{i+1})\set{(n_1+\dots+n_i)d/n}}}
%{1-t^{n_i+n_{i+1}}}.$$
%\end{rmr}

 %recursive formula
\begin{sloppypar}
	
\end{sloppypar}
\section{\texorpdfstring{\la}{Lambda}-ring of c-sequences}\label{sec:c-seq}
Let $S$ be the set of sequences $s=(s_k)_{k\ge1}$ of complex
numbers. For any two sequences $s=(s_k)_{k\ge1}$, $s'=(s'_k)_{k\ge1}$, 
define their sum and product by
$$s+s'=(s_k+s'_k)_{k\ge1},\quad ss'=(s_ks'_k)_{k\ge1}.$$
This endows $S$ with a structure of a ring.
Define the Adams operations on $S$ by $\psi_r(s)=(s_{rk})_{k\ge1}$, $r\ge1$.
This endows $S$ with a structure of a \la-ring.

\begin{rmr}
For any commutative \cQ-algebra $R$, we can define the \la-ring
$S(R)$ of sequences in $R$ in the same way as above. There is
a commutative diagram of \la-ring isomorphisms
\begin{diagram}
W(R)&&\rTo^a&&S(R)\\
&\rdTo_b&&\ldTo_c\\
&&\La(R)\\
\end{diagram}
where $W(R)$ is the ring of Witt vectors over $R$, $\La(R)=1+tR\pser t$ is 
Grothendieck's \la-ring (see, e.g., \cite[V.2.3]{SGA6}), 
and the maps $a$, $b$, and $c$ are given by
$$a:(x_n)_{n\ge1}\mto\Big(\sum_{d\mid n}dx_d^{n/d}\Big)_{n\ge1},$$
$$b:(x_n)_{n\ge1}\mto\prod_{n\ge1}(1-x_nt^n)\inv,$$
$$c:(s_n)_{n\ge1}\mto\exp\Big(\sum_{n\ge1}s_nt^n/n\Big).$$  
\end{rmr}

Let $\cF_q$ be some finite field with $q$ elements.
Define the subgroups of $\cC^*$
$$G_0=\sets{\la\in\cC^*}{|\la|=q^{n/2},n\in\cZ},\qquad G_1=\sets{q^{n/2}}{n\in\cZ}.$$
We endow the group algebra $\cQ[G_0]$ with a structure of a \la-ring by the formula
$\psi_r(\sum_{\la\in G_0}a_\la\la)=\sum_{\la\in G_0}a_\la\la^r$.
Consider the map $\cQ[G_0]\ar S$, given by
$$\cQ[G_0]\ni\sum_{\la\in G_0}a_\la\la\mto\Big(\sum_{\la\in G_0}a_\la\la^k\Big)_{k\ge1}\in S.$$

\begin{prp}
The map $\cQ[G_0]\ar S$ is an injective \la-ring homomorphism.
If $s\in S$ is the image of $\sum a_\la\la\in\cZ[G_0]$ then
$$Z_s(t):=\exp\Big(\sum_{k\ge1}s_k\frac{t^k}k\Big)=\prod_{\la\in G_0}(1-t\la)^{-a_\la}$$
in $\cC\pser t$.
\end{prp}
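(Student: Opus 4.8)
The plan is to verify the three defining properties of a \la-ring homomorphism together with injectivity, and then to extract the closed form for $Z_s(t)$ by a purely formal computation. Write $\phi:\cQ[G_0]\ar S$ for the map $\phi(\sum_\la a_\la\la)=(\sum_\la a_\la\la^k)_{k\ge1}$. Every element of the group algebra is a finite \cQ-linear combination, so all sums over $G_0$ below are finite and the interchanges of summation are legitimate.

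First I would check that $\phi$ is a ring homomorphism. Additivity is immediate from the componentwise addition in $S$. For multiplicativity, the product in $\cQ[G_0]$ is $(\sum_\la a_\la\la)(\sum_\mu b_\mu\mu)=\sum_{\la,\mu}a_\la b_\mu(\la\mu)$, whose $k$-th component under $\phi$ is $\sum_{\la,\mu}a_\la b_\mu(\la\mu)^k=(\sum_\la a_\la\la^k)(\sum_\mu b_\mu\mu^k)$, exactly the $k$-th component of the product of images in $S$. Compatibility with the Adams operations is equally direct: applying $\psi_r$ in $\cQ[G_0]$ first gives $\sum_\la a_\la\la^r$, whose image has $k$-th component $\sum_\la a_\la\la^{rk}$; applying $\psi_r$ in $S$ afterwards passes to the $rk$-th terms of $(\sum_\la a_\la\la^k)_k$, namely $\sum_\la a_\la\la^{rk}$. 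Since over a \cQ-algebra the \la-ring structure is determined by the Adams operations, this makes $\phi$ a \la-ring homomorphism.

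The one genuinely substantive point is injectivity, which I expect to be the only step requiring an actual argument. Suppose $\phi(\sum_\la a_\la\la)=0$, so that $\sum_\la a_\la\la^k=0$ for every $k\ge1$, the sum ranging over the finite set $\la_1,\dots,\la_m$ of distinct elements of $G_0$ in the support. Each $\la\in G_0$ satisfies $|\la|=q^{n/2}>0$, hence is nonzero. Restricting to $k=1,\dots,m$ gives a homogeneous linear system in the $a_{\la_i}$ with coefficient matrix $(\la_i^k)_{1\le k,i\le m}$; factoring $\la_i$ out of the $i$-th column exhibits its determinant as $(\prod_i\la_i)$ times a Vandermonde determinant in the distinct $\la_i$, hence nonzero. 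Therefore all $a_{\la_i}=0$. (This is just the classical linear independence of the power functions $k\mto\la^k$ for distinct nonzero $\la$.)

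Finally I would derive the formula for $Z_s(t)$, working formally in $\cC\pser t$. Substituting $s_k=\sum_\la a_\la\la^k$, interchanging the finite $\la$-sum with the $k$-sum, and applying the formal power series identity $\sum_{k\ge1}x^k/k=-\log(1-x)$ with $x=\la t$ gives
\begin{align*}
Z_s(t)&=\exp\Big(\sum_{k\ge1}\sum_\la a_\la\la^k\tfrac{t^k}k\Big)
=\exp\Big(\sum_\la a_\la\sum_{k\ge1}\tfrac{(\la t)^k}k\Big)\\
&=\exp\Big(-\sum_\la a_\la\log(1-\la t)\Big)
=\prod_{\la\in G_0}(1-\la t)^{-a_\la}.
\end{align*}
Since each $1-\la t$ is a unit in $\cC\pser t$ and the hypothesis $\sum a_\la\la\in\cZ[G_0]$ forces $a_\la\in\cZ$, the right-hand side is a well-defined element of $\cC\pser t$ and every manipulation above is formal, requiring no convergence hypotheses. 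This completes the verification.
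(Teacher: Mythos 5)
Your proposal is correct, and the derivation of the closed form for $Z_s(t)$ is essentially the paper's own computation (the paper takes $\log$ of the product and arrives at $\sum_{k\ge1}s_kt^k/k$; you run the same manipulation in the direction $\exp$, which is equivalent). Where you genuinely diverge is injectivity. The paper deduces it \emph{from} the zeta-function formula: a nonzero kernel element may be rescaled to lie in $\cZ[G_0]$, and then $Z_s(t)=1$ forces $\prod_{\la\in G_0}(1-t\la)^{-a_\la}=1$, whence all $a_\la=0$ by unique factorization of rational functions over $\cC$. You instead invoke the classical linear independence of the power functions $k\mto\la^k$ for distinct nonzero $\la$, via a Vandermonde determinant on the components $k=1,\dots,m$. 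Your route is more elementary and slightly cleaner in one respect: it works directly with $\cQ$-coefficients, so you never need the denominator-clearing reduction to $\cZ[G_0]$ or the integrality of the exponents $a_\la$, both of which the paper's argument implicitly relies on (integer exponents are what make the product an honest rational function to which unique factorization applies). What the paper's approach buys in exchange is economy — injectivity falls out of the formula it has just proved, with no separate linear-algebra input — and it foreshadows the characterization of effective c-sequences by rationality of $Z_s(t)^r$ used immediately afterwards. Your explicit verification that $\phi$ respects products and Adams operations fills in what the paper dismisses as clear; that is harmless and correct.
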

\begin{proof}
It is clear that $\cQ[G_0]\ar S$ is a \la-ring homomorphism.
To prove the formula, we note that
$$\log\Big(\prod_{\la\in G_0}(1-t\la)^{-a_\la}\Big)
=\sum_{\la\in G_0}a_\la\log\Big(\frac1{1-t\la}\Big)
=\sum_{k\ge1}\sum_{\la\in G_0}a_\la\frac{(t\la)^k}k
=\sum_{k\ge1}\frac{s_kt^k}k.$$

If the map $\cQ[G_0]\ar S$ is not injective, then we can find some
nonzero element $\sum a_\la\la\in\cZ[G_0]$ in the kernel.
It follows that $\prod_{\la\in G_0} (1-t\la)^{-a_\la}=1$ and therefore all $a_\la$
are zeros, contradicting the assumption.
\end{proof}

\begin{dfn}
Define the ring $S_c^0$ of effective c-sequences to be the image
of $\cQ[G_0]$ in $S$. Define the ring of special c-sequences to be
the image of $\cQ[G_1]$ in $S$.
Define the ring $S_c$ of c-sequences to be localization  
of $S_c^0$ with respect to special c-sequences that are invertible in
$S$. The ring $S_c$ is a \la-subring of $S$.
\end{dfn}

\begin{rmr}
A sequence $s\in S$ is an effective c-sequence 
if and only if there exists some integer $r\ge1$ such
that $Z_s(t)^r=\exp(\sum_{k\ge1}s_kt^k/k)^r$ is a rational function
with zeros and poles having absolute values of the form $q^{n/2}$,
$n\in\cZ$.
\end{rmr}

\begin{rmr}\label{rmr:c-seq of var}
If $X$ is a scheme over $\cF_q$ then, by the Weil conjectures,
the sequence $(\#X(\cF_{q^k}))_{k\ge1}$ is an effective
c-sequence.
\end{rmr}

\begin{rmr}
\begin{sloppypar}	
The special c-sequence $\cL=(q^k)_{k\ge1}$ is called 
the Lefschetz c-sequence. This is a c-sequence associated
to the affine line $\cA^1_{\cF_q}$. 
The ring of special c-sequences is generated by $\cL$ and $\cL\inv$.
\end{sloppypar}
\end{rmr}

\begin{rmr}
There exists a structure of a \la-ring on the Grothendieck ring 
of algebraic varieties over \cC 
(or, more precisely, on the Grothendieck ring of the category
of motives over \cC), see \cite{Getz1}. 
In this structure the sigma-operations on algebraic varieties
are given by the symmetric products.
In the case of a finite field $\cF_q$, we define
the \la-ring structure not on the Grothendieck ring 
of algebraic varieties over $\cF_q$ or $\ub\cF_q$ ,
but rather on the ring of c-sequences of algebraic varieties. 
The Adams operation $\psi_n$ applied to the c-sequence of the algebraic variety
$X$ over $\cF_q$ gives the c-sequence of the algebraic variety
$X_{\cF_{q^n}}$ over $\cF_{q^n}$.  
\end{rmr}

\begin{dfn}
For any $\la\in G_0$, define its weight $w(\la)$ to be the integer
such that $|\la|=q^{w(\la)/2}$. This defines a group homomorphism
$w:G_0\ar\cZ$, which induces a \la-ring homomorphism 
$P:S_c^0\iso\cQ[G_0]\ar\cQ[v,v\inv]$, $\sum a_\la\la\mto\sum a_\la v^{w(\la)}$,
called the Poincar\'e polynomial. It induces the map
$P:S_c\ar\cQ(v)$, called the Poincar\'e function.  
\end{dfn}

\begin{rmr}
If $s\in S_c^0$ is an effective c-sequence and $r\ge1$ is an
integer such that $Z_s(t)^r$ is a rational function, say
$\prod_{\la\in G_0}(1-\la t)^{-a_\la}$, then the Poincar'e 
polynomial of $s$ equals $P(s,v)=\frac 1r\sum_{\la\in G_0} a_\la v^{w(\la)}$.
\end{rmr}

\begin{rmr}\label{rmr:poincare of variety}
Let $X$ be a scheme over $\cF_q$ and let $l$ be a prime
number, coprime with~$q$. There is a weight filtration
$(W_n)_{n\in\cZ}$ defined
on the cohomology groups $H_c^i(X,\cQ_l)$
\cite{Del2}. One defines the virtual Betti numbers
$$b_n(X)=\sum_{i\ge0}(-1)^i\dim(\Gr^W_nH^i_c(X,\cQ_l))$$
and the virtual Poincar\'e polynomial 
$P(X,v)=\sum_{n\in\cZ}b_n(X)v^n$.
If $s\in S_c^0$ is a c-sequence associated to $X$ 
as in Remark \ref{rmr:c-seq of var} then
$P(X,v)=P(s,v)$.
\end{rmr}

\section{c-sequences of semistable bundles}\label{sec:semist}
Let $X$ be a curve of genus $g$ over a finite field $k=\cF_q$. We
introduce the pairing $\ang{\cdot,\cdot}$ on $\cZ^2$ as in Section
\ref{subsec:chern}. Let $\Ga=\cN^*\xx\cZ$ be endowed with the
total preorder by the rule $\al\le\be$ if $\mu(\al)\le\mu(\be)$.
For any $\al\in\Ga$ and any finite field extension $K/k$, we
define
$$r_{\al}(K)=\sum_{M\in\lE\sst_{X_{K}}(\al)}\frac1{\#\Aut M},\quad
m_{\al}(K)=\sum_{M\in\lE_{X_{K}}(\al)}\frac1{\#\Aut M}.$$
The first sum is finite and the second sum converges to a rational number
(see Theorem \ref{thr:HNDR}).
Define the elements $r_\al,m_\al\in S$ by $r_\al=(r_{\al}(\cF_{q^k}))_{k\ge1}$
and $m_\al=(m_{\al}(\cF_{q^k}))_{k\ge1}$.
The goal of this section is to show that $r_\al$ is a c-sequence and
to introduce the formula for its Poincar\'e function due to Zagier.

The idea goes as follows. Let us imagine that  the elements
$$M_\al=\sum_{M\in\lE_X(\al)}[M],\qquad R_\al=\sum_{M\in\lE_X\sst(\al)}[M]$$
are well defined in some completion of the Hall algebra of $\Coh X$.
Using the Harder-Narasimhan filtration, we can write
$$M_\al=\sum_{\la\in\lP^i_\al}R_{\la_1}\dots R_{\la_{l(\la)}}.$$
If we could apply the map
$\int:\lH_+\ar\cQ_q\tw[x_1,x_2]$ to the above formula then we would get
the relation between the numbers $m_\al(k)$ and
$r_\al(k)$.

\begin{thr}[see \protect{\cite{HarNar1,DesRam1}}]\label{thr:HNDR}
For any $\al=(n,d)\in\Ga$, we have
$$m_\al(\cF_q)=\sum_{\la\in\lP^i_\al}q^{-\sum_{i<j}\ang{\la_i,\la_j}}
r_{\la_1}(\cF_q)\dots r_{\la_{l(\la)}}(\cF_q).$$
Moreover,
$$m_\al(\cF_q)
=m_n(\cF_q)=\frac{\prod_{i=1}^{2g}(1-\om_i)}{q-1}
q^{(n^2-1)(g-1)}Z_X(q^{-2})\dots Z_X(q^{-n}),$$
where
$$Z_X(t)=\frac{\prod_{i=1}^{2g}(1-\om_it)}{(1-t)(1-qt)}$$
is a zeta-function of $X$ and $|\om_i|=q^{1/2}$.
\end{thr}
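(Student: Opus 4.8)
The plan is to prove the two assertions separately: the first is the Harder--Narasimhan recursion, which I would obtain formally from the integration homomorphism $\int$, and the second is the Siegel mass formula for all bundles, which is the genuinely deep input.

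\emph{The recursion.} I would push the Harder--Narasimhan decomposition sketched before the statement through the homomorphism of Lemma \ref{lmm:integral}. The input is existence and uniqueness of the HN filtration: every bundle $M$ over $X$ carries a unique filtration $0=M_0\sb\dots\sb M_l=M$ with semistable subquotients of strictly decreasing slopes, and this filtration is canonical, hence preserved by every automorphism of $M$. Writing $R_\be=\sum_{G\in\lE_X\sst(\be)}[G]$ in the Hall algebra and fixing $\la=(\la_1,\dots,\la_k)\in\lP^i_\al$, the coefficient of $[M]$ in the product $R_{\la_1}\dots R_{\la_{l(\la)}}$ counts filtrations of $M$ whose semistable subquotients have characters $\la_k,\dots,\la_1$ from the maximal destabilizing sub to the top quotient, so that the slopes decrease; by HN uniqueness exactly one such filtration exists when $\la$ is the HN type of $M$, and none otherwise. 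Since every bundle has exactly one HN type, summing over $\la$ gives $M_\al=\sum_{\la\in\lP^i_\al}R_{\la_1}\dots R_{\la_{l(\la)}}$, with each $[M]$ occurring once.

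\emph{Applying $\int$ and convergence.} On a single finite product Lemma \ref{lmm:integral} and the twisted multiplication $x^{\la_i}\circ x^{\la_j}=q^{-\ang{\la_i,\la_j}}x^{\la_i+\la_j}$ give
$$\int(R_{\la_1}\dots R_{\la_{l(\la)}})=q^{-\sum_{i<j}\ang{\la_i,\la_j}}\Big(\prod_i r_{\la_i}(\cF_q)\Big)x^\al,$$
and reading off the coefficient of $x^\al$ yields the stated formula. This is the one place needing care, since $\lP^i_\al$ is infinite and $M_\al$ does not literally lie in $\what\lH_+$ (all relevant terms sit in a single graded piece), so I would argue convergence numerically. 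Writing $\la_i=(n_i,d_i)$, for $i<j$ one has $\ang{\la_i,\la_j}=n_in_j(\mu(\la_j)-\mu(\la_i))+(1-g)n_in_j$ with $\mu(\la_i)<\mu(\la_j)$, which grows without bound as the slopes of a sequence spread apart, so the twist $q^{-\sum_{i<j}\ang{\la_i,\la_j}}$ decays geometrically; moreover $r_{\la_i}(\cF_q)$ depends only on $\deg\la_i$ modulo $\rk\la_i$ (tensoring by a degree-one line bundle is an autoequivalence preserving semistability and shifting the slope by one), hence is bounded for each fixed rank. As only finitely many rank-vectors $(n_1,\dots,n_k)$ contribute, the series converges absolutely, which also shows that $m_\al(\cF_q)$ is a well-defined rational number.

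\emph{The closed form.} Here $m_\al(\cF_q)=\sum_M 1/\#\Aut M$ is the groupoid count of the stack of all rank-$n$, degree-$d$ bundles on $X$, and I would evaluate it by the Siegel mass formula for the function field of $X$. First this count is independent of $d$, giving $m_\al=m_n$; then, rewriting the mass adelically as a volume for $\GL_n$ and invoking that the Tamagawa number of $\SL_n$ is $1$ (Weil's theorem), one obtains
$$m_n=\frac{\#\Pic^0(X)(\cF_q)}{q-1}\,q^{(n^2-1)(g-1)}\prod_{i=2}^n Z_X(q^{-i}).$$
The $n=1$ case is the direct computation $m_1=\#\Pic^0(X)(\cF_q)/(q-1)=\prod_{i=1}^{2g}(1-\om_i)/(q-1)$, since a line bundle has automorphism group $\cF_q^*$ and $\#\Pic^0(X)(\cF_q)=\prod_{i=1}^{2g}(1-\om_i)$ by the Weil conjectures, while the higher factors $Z_X(q^{-i})$ come from the successive Levi quotients in the reduction theory of $\GL_n$; I would finish by matching normalisations, in particular the overall power $q^{(n^2-1)(g-1)}$, against this bookkeeping.

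\emph{Main obstacle.} The recursion is essentially automatic once Lemma \ref{lmm:integral} and HN uniqueness are granted, the only delicate point being the convergence estimate above. The real difficulty lies in the closed form: its evaluation rests on the Siegel mass formula, equivalently on the computation that the Tamagawa number of $\SL_n$ equals $1$ (or on the reduction-theoretic argument of Harder and Narasimhan), which is the deep external input I would have to import rather than reprove.
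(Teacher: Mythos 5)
Your proposal is correct and takes essentially the approach the paper itself indicates: the paper states this theorem with a citation to Harder--Narasimhan and Desale--Ramanan, and its preceding heuristic (writing $M_\al=\sum_{\la\in\lP^i_\al}R_{\la_1}\dots R_{\la_{l(\la)}}$ via uniqueness of the HN filtration and pushing it through the integration map of Lemma \ref{lmm:integral} into the twisted ring) is exactly your argument for the recursion. Your two additions --- the absolute-convergence estimate using $\ang{\la_i,\la_j}=n_in_j(\mu(\la_j)-\mu(\la_i))+(1-g)n_in_j$ together with periodicity of $r_{(n,d)}$ in $d$, and the imported Siegel mass formula (Tamagawa number of $\SL_n$ equal to $1$) for the closed form --- correctly supply precisely the ingredients the paper leaves to the cited references.
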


\begin{crl}
For any $n\ge1$, $m_n=(m_n(\cF_{q^k}))_{k\ge1}$ is a c-sequence with the Poincar\'e function
$$P(m_n,v)=(v^{2n}-1)\prod_{i=1}^{n}\frac{(1-v^{2i-1})^{2g}}{(1-v^{2i})^2}.$$
\end{crl}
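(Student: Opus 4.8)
The plan is to feed the explicit formula of Theorem~\ref{thr:HNDR} into the $\la$-ring $S_c$. By that theorem $m_\al(\cF_q)$ depends only on $n=\rk\al$ and, after substituting the zeta-function, equals
$$m_n(\cF_q)=\frac{\prod_{i=1}^{2g}(1-\om_i)}{q-1}\,q^{(n^2-1)(g-1)}\prod_{j=2}^{n}\frac{\prod_{i=1}^{2g}(1-\om_iq^{-j})}{(1-q^{-j})(1-q^{1-j})}.$$
First I would observe that each factor of the numerator has the shape $1-\mu$ with $\mu$ a monomial in the $\om_i$ and in $q$; since $|\om_i|=q^{1/2}$ and $|q|=q$, every such $\mu$ lies in $G_0$, so $1-\mu$ is an effective c-sequence. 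The denominators $q-1$, $1-q^{-j}$, $1-q^{1-j}$ (for $2\le j\le n$) are, up to sign, of the form $1-q^m$ with $m\ne0$; these lie in the image of $\cQ[G_1]$ and have invertible image in $S$ (as $q^m\ne1$), so they are invertible special c-sequences, while $q^{(n^2-1)(g-1)}$ is an integer power of the Lefschetz c-sequence $\cL$. Hence the right-hand side defines a genuine element of $S_c$.

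The point that turns this into an identity of c-sequences rather than of individual numbers is the base-change behaviour supplied by the Weil conjectures: passing from $\cF_q$ to $\cF_{q^k}$ replaces $q$ by $q^k$ and each Frobenius eigenvalue $\om_i$ by $\om_i^k$, so every monomial $\mu\in G_0$ occurring above is replaced by $\mu^k$. This is exactly the rule defining the injection $\cQ[G_0]\hookrightarrow S$, so the sequence $(m_n(\cF_{q^k}))_{k\ge1}$ is the image in $S$ of the element of $S_c$ exhibited above. Therefore $m_n$ is a c-sequence.

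To extract the Poincaré function I would apply the homomorphism $P:S_c\ar\cQ(v)$. Since $P$ sends $\la\in G_0$ to $v^{w(\la)}$ and $w$ is additive, it acts on the building blocks by $\om_i\mapsto v$ (as $w(\om_i)=1$) and $q\mapsto v^2$ (as $w(q)=2$); applying it factor by factor yields
$$P(m_n,v)=\frac{(1-v)^{2g}}{v^2-1}\,v^{2(n^2-1)(g-1)}\prod_{j=2}^{n}\frac{(1-v^{1-2j})^{2g}}{(1-v^{-2j})(1-v^{2-2j})}.$$
What remains is a simplification driven by the identity $1-v^{-m}=-v^{-m}(1-v^m)$. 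Because $2g$ is even, turning each negative exponent positive costs only powers of $v$: collecting the genus factors gives $v^{-2g(n^2-1)}\prod_{i=1}^{n}(1-v^{2i-1})^{2g}$, while the denominator product simplifies, up to a power of $v$, to $\frac{\big(\prod_{i=1}^{n}(1-v^{2i})\big)^2}{(1-v^2)(1-v^{2n})}$.

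Assembling these, the total exponent of $v$ is $2(n^2-1)\big[(g-1)-g+1\big]=0$, so all powers of $v$ cancel, and using $\frac{1-v^2}{v^2-1}=-1$ one lands on
$$P(m_n,v)=(v^{2n}-1)\prod_{i=1}^{n}\frac{(1-v^{2i-1})^{2g}}{(1-v^{2i})^2},$$
as claimed. I expect the main obstacle to be precisely this last bookkeeping step, in particular the verification that the exponent of $v$ vanishes identically in $g$ and $n$: this cancellation is exactly what forces the Harder--Narasimhan normalization $q^{(n^2-1)(g-1)}$ to be the right one. The conceptual content, by contrast, is the Weil base-change step identifying the explicit formula with a well-defined element of $S_c$.
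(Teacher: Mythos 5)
Your proposal is correct and takes essentially the same route the paper intends: the corollary is stated without proof because it follows directly from Theorem \ref{thr:HNDR} exactly as you argue, with base change replacing $q$ by $q^k$ and each $\om_i$ by $\om_i^k$ (identifying $m_n$ with an element of $S_c$ whose numerator lies in $\cQ[G_0]$ and whose denominators are invertible special c-sequences), followed by applying $P$ via $\om_i\mapsto v$, $q\mapsto v^2$. I checked your bookkeeping — the genus exponent $-2g(n^2-1)$, the denominator exponent $-2(n^2-1)$ from $\sum_{j=2}^n(4j-2)$, and the total cancellation $2(n^2-1)\bigl[(g-1)-g+1\bigr]=0$ — and it all verifies, yielding the stated formula.
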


%Applying \ref{}, we get
%$$r_\al(\cF_q)=\sum_{\la\in\lP^s_\al}(-1)^{l(\la)-1}q^{-\sum_{i<j}\ang{\la_i,\la_j}}
%m_{\la_1}(\cF_q)\dots m_{\la_{l(\la)}}(\cF_q)$$
%and therefore
%$$r_\al=\sum_{\la\in\lP^s_\al}(-1)^{l(\la)-1}\cL^{-\sum_{i<j}\ang{\la_i,\la_j}}
%m_{\la_1}\dots m_{\la_{l(\la)}}$$
%in $S$.

The sum in the above theorem is infinite, so we can not deduce
directly that $r_\al$ is a c-sequence.

\begin{thr}
For any $\al=(n,d)$, we have
$$r_\al(\cF_q)
=\sum_{\stackrel{n_1,\dots,n_k>0}{\sum n_i=n}}
q^{(g-1)\sum_{i<j}n_in_j}\Psi_{n_*,d}(q)m_{n_1}(\cF_q)\dots m_{n_k}(\cF_q)$$
where
$$\Psi_{n_*,d}(t)=\prod_{i=1}^{k-1}\frac
{t^{(n_i+n_{i+1})\set{(n_1+\dots+n_i)d/n}}}
{1-t^{n_i+n_{i+1}}}.$$
\end{thr}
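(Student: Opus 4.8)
The plan is to obtain the formula by inverting the Harder--Narasimhan recursion of Theorem \ref{thr:HNDR} through Zagier's formula in the form of Remark \ref{rmr:zagier formula}. The one point preventing a direct application is that Theorem \ref{thr:HNDR} is written with the pairing $\ang{\al,\be}=nd'-dn'+(1-g)nn'$ of Section \ref{subsec:chern}, while Remark \ref{rmr:zagier formula} uses the pairing $nd'-dn'$ without the genus term. First I would separate these: writing $\al=(n,d)$ and $\la_i=(n_i,d_i)$, the exponent occurring in Theorem \ref{thr:HNDR} splits as
$$-\sum_{i<j}\ang{\la_i,\la_j}=-\sum_{i<j}(n_id_j-d_in_j)-(1-g)\sum_{i<j}n_in_j.$$

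The key observation is that the extra genus term depends only on the ranks and factors multiplicatively. Using $\sum_{i<j}n_in_j=\frac12\big(n^2-\sum_i n_i^2\big)$, valid since $\sum_i n_i=n$, one has
$$q^{(g-1)\sum_{i<j}n_in_j}=q^{(g-1)n^2/2}\prod_i q^{-(g-1)n_i^2/2}.$$
This lets the genus factor be absorbed into a rescaling. Accordingly I would set, all evaluated at $\cF_q$, $\tilde m_\al=q^{-(g-1)n^2/2}m_\al$ and $\tilde r_\be=q^{-(g-1)(n')^2/2}r_\be$ for $\be=(n',d')$. Multiplying the recursion of Theorem \ref{thr:HNDR} by $q^{-(g-1)n^2/2}$ and absorbing one factor $q^{-(g-1)n_i^2/2}$ into each $r_{\la_i}$ turns it, term by term, into
$$\tilde m_\al=\sum_{\la\in\lP^i_\al}q^{-\sum_{i<j}(n_id_j-d_in_j)}\tilde r_{\la_1}\dots\tilde r_{\la_{l(\la)}},$$
which is exactly the hypothesis of Remark \ref{rmr:zagier formula} with $t=q$, $a_\al=\tilde r_\al$ and $b_n=\tilde m_n$. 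Here I would invoke the second part of Theorem \ref{thr:HNDR}, that $m_\al$ depends only on the rank $n$; this ensures $\tilde m_\al$ too depends only on $n$, so it may legitimately serve as $b_n$. Since the rescaling is a term-by-term multiplication by factors depending only on the ranks, convergence of the infinite sum is unaffected.

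Applying Remark \ref{rmr:zagier formula}, whose hypotheses hold ($t=q$ is invertible with $|q|>1$, and the inverse sum ranges over the finitely many compositions $n_1+\dots+n_k=n$), yields
$$\tilde r_\al=\sum_{\stackrel{n_1,\dots,n_k>0}{n_1+\dots+n_k=n}}\Psi_{n_*,d}(q)\,\tilde m_{n_1}\dots\tilde m_{n_k}.$$
To conclude I would substitute back $\tilde r_\al=q^{-(g-1)n^2/2}r_\al$ and $\tilde m_{n_i}=q^{-(g-1)n_i^2/2}m_{n_i}$, cancel the common factor $q^{-(g-1)n^2/2}$, and recombine $q^{(g-1)n^2/2}\prod_i q^{-(g-1)n_i^2/2}=q^{(g-1)\sum_{i<j}n_in_j}$, which gives the stated identity. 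I expect the main difficulty to be purely the exponent bookkeeping: one must track which pairing is in force at each step and verify that the rescaling powers cancel precisely. The half-integer exponents $q^{\pm(g-1)n^2/2}$ introduced by the rescaling are harmless, as they occur only in the auxiliary sequences $\tilde m,\tilde r$ and cancel from the final identity (whose exponent $\sum_{i<j}n_in_j$ is an integer); in any case they are natural in the normed ring $\cR$ or $\cC$ in which the computation is performed.
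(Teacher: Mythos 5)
Your proof is correct and is essentially the paper's own argument: the paper likewise sets $a_\al=q^{n^2(1-g)/2}r_\al(\cF_q)$ and $b_n=q^{n^2(1-g)/2}m_n(\cF_q)$ — exactly your rescaling $\tilde r,\tilde m$ — observes that Theorem \ref{thr:HNDR} then becomes the hypothesis of Remark \ref{rmr:zagier formula} with the skew pairing $\ang{(n,d),(n',d')}'=nd'-dn'$, and applies that remark with $t=q$. Your version merely makes explicit the exponent bookkeeping (via $\sum_{i<j}n_in_j=\tfrac12(n^2-\sum_i n_i^2)$), the use of $m_\al=m_n$, and the harmlessness of the half-integer powers, all of which the paper leaves implicit.
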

\begin{proof}
Let $a_\al=q^{n^2(1-g)/2}r_\al(\cF_q)$, $\al=(n,d)\in\Ga$
and let $b_n=q^{n^2(1-g)/2}m_n(\cF_q)$, $n\ge1$.
Then the above theorem implies
$$b_n=\sum_{\la\in\lP^i_\al}q^{-\sum_{i<j}\ang{\la_i,\la_j}'}
a_{\la_1}\dots a_{\la_{l(\la)}},\qquad \al=(n,d)\in\Ga,$$
where $\ang{(n,d),(n',d')}'=nd'-dn'$.
Applying Remark \ref{rmr:zagier formula}, we get
$$a_\al=\sum_{\stackrel{n_1,\dots,n_k>0}{\sum n_i=n}}
\Psi_{n_*,d}(q)b_{n_1}\dots b_{n_k},\qquad \al=(n,d)\in\Ga,$$
which implies the theorem.
\end{proof}

\begin{crl}
For any $\al=(n,d)$, we have
$$r_\al
=\sum_{\stackrel{n_1,\dots,n_k>0}{\sum n_i=n}}
\cL^{(g-1)\sum_{i<j}n_in_j}\Psi_{n_*,d}(\cL)m_{n_1}\dots m_{n_k}.$$
\end{crl}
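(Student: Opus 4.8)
The plan is to derive this c-sequence identity from the preceding theorem by comparing components, exploiting that $S_c$ is a subring of the ring $S$ of all sequences, in which addition, multiplication, and (where possible) inversion are performed componentwise. Since an equality in $S$ is nothing but an equality of each component, it suffices to check that the $m$-th components of the two sides agree for every $m\ge1$. Before doing so I would verify that the right-hand side really defines an element of $S_c$, which in particular exhibits $r_\al$ as a c-sequence: each $m_{n_i}$ is a c-sequence by the corollary to Theorem~\ref{thr:HNDR}, and $\cL\in S_c$. For each $j\ge1$ the sequence $1-\cL^j$ is the image of $1-q^j\in\cQ[G_1]$, hence a special c-sequence, and its $m$-th component $1-q^{jm}$ is nonzero; so $1-\cL^j$ is invertible in $S$ and therefore invertible in $S_c$ by the definition of the latter as a localization. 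Because the numerator exponents in $\Psi_{n_*,d}$ are nonnegative, $\Psi_{n_*,d}(\cL)$ is a product of a power of $\cL$ with factors $(1-\cL^{n_i+n_{i+1}})\inv$, so it lies in $S_c$; thus so does the entire right-hand side.

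Next I would read off the $m$-th component of the right-hand side. The $m$-th component of $\cL$ is $q^m$, and since all operations are componentwise, the $m$-th component of $\cL^{(g-1)\sum_{i<j}n_in_j}$ is $(q^m)^{(g-1)\sum_{i<j}n_in_j}$, that of $(1-\cL^{n_i+n_{i+1}})\inv$ is $(1-q^{m(n_i+n_{i+1})})\inv$, and hence that of $\Psi_{n_*,d}(\cL)$ is $\Psi_{n_*,d}(q^m)$. The $m$-th component of $m_{n_i}$ is $m_{n_i}(\cF_{q^m})$ by definition. Therefore the $m$-th component of the right-hand side equals
$$\sum_{\stackrel{n_1,\dots,n_k>0}{\sum n_i=n}}
(q^m)^{(g-1)\sum_{i<j}n_in_j}\,\Psi_{n_*,d}(q^m)\,m_{n_1}(\cF_{q^m})\dots m_{n_k}(\cF_{q^m}).$$

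Finally I would apply the preceding theorem over the base field $\cF_{q^m}$. The one point that needs care — and the only real subtlety in the argument — is that $X_{\cF_{q^m}}$ is again a smooth projective curve over a finite field, of the same genus $g$, so the theorem applies verbatim with $q$ replaced by $q^m$; here one uses that $\Psi_{n_*,d}$ depends only on the combinatorial data $(n_*,d)$ and its variable, and that the exponent $(g-1)$ involves the invariant genus. The theorem then identifies the displayed sum with $r_\al(\cF_{q^m})$, which is precisely the $m$-th component of $r_\al$. As this holds for every $m\ge1$, the two sides coincide in $S$, and hence in $S_c$.
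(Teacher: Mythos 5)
Your componentwise argument --- apply the preceding theorem to $X_{\cF_{q^m}}$ (the same smooth projective curve of genus $g$, now over $\cF_{q^m}$) for every $m\ge1$ and match the $m$-th components of the two sides, using that the $m$-th component of $\cL$ is $q^m$ and that of $m_{n_i}$, $r_\al$ are $m_{n_i}(\cF_{q^m})$, $r_\al(\cF_{q^m})$ --- is precisely the (omitted) proof the paper intends, and it is correct. One caveat on your preliminary aside only: the individual exponents $(n_i+n_{i+1})\set{(n_1+\cdots+n_i)d/n}$ are a priori nonnegative \emph{rationals}, so to place each summand in $S_c$ you need the total numerator exponent of $\Psi_{n_*,d}(\cL)$ to be an integer --- which it is, since $\sum_{i=1}^{k-1}(n_i+n_{i+1})(n_1+\cdots+n_i)=n(n_1+\cdots+n_{k-1})$ telescopes to a multiple of $n$ --- but this does not affect the identity in $S$, which your component check fully establishes.
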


\begin{crl}\label{cor:poincare of r}
For any $\al=(n,d)\in\Ga$, the element $r_\al\in S$
is a c-sequence and its Poincar\'e function equals
\begin{align*}
P(r_\al,v)
&=\sum_{\stackrel{n_1,\dots,n_k>0}{\sum n_i=n}}
v^{2(g-1)\sum_{i<j}n_in_j}\Psi_{n_*,d}(v^2)P_{n_1}(v)\dots P_{n_k}(v),
\end{align*}
where
$$P_n(v)=(v^{2n}-1)\prod_{i=1}^{n}\frac{(1-v^{2i-1})^{2g}}{(1-v^{2i})^2}.$$
\end{crl}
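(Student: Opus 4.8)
The plan is to obtain both assertions at once by pushing the closed formula for $r_\al$ from the preceding corollary,
$$r_\al=\sum_{\stackrel{n_1,\dots,n_k>0}{\sum n_i=n}}\cL^{(g-1)\sum_{i<j}n_in_j}\Psi_{n_*,d}(\cL)m_{n_1}\dots m_{n_k},$$
through the Poincar\'e function $P$. To make this legitimate I first have to know that the right-hand side lies in the \la-subring $S_c\sb S$; then, since $P:S_c\ar\cQ(v)$ is a homomorphism of \la-rings, both the c-sequence claim and the value of $P(r_\al,v)$ follow by evaluating $P$ termwise.

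For the membership $r_\al\in S_c$ I would treat each factor of a summand separately. Each $m_{n_i}$ is a c-sequence by the corollary that computes $P(m_n,v)=P_n(v)$. The Lefschetz sequence $\cL=(q^k)_{k\ge1}$ is a special c-sequence, so every integer power $\cL^a$ is one too; in particular the prefactor $\cL^{(g-1)\sum_{i<j}n_in_j}$ lies in $S_c$. The delicate factor is $\Psi_{n_*,d}(\cL)$: written as in Zagier's theorem it is a product of factors $\cL^{(n_i+n_{i+1})\set{(n_1+\dots+n_i)d/n}}/(1-\cL^{n_i+n_{i+1}})$ whose numerator exponents are merely rational. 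Here I would check that these exponents add up to an integer (a short telescoping computation shows the total numerator exponent equals $d(n_1+\dots+n_{k-1})$), so that $\Psi_{n_*,d}(\cL)$ is a genuine rational expression in $\cL$ with integer numerator power and denominator $\prod_i(1-\cL^{n_i+n_{i+1}})$. Each $1-\cL^{n_i+n_{i+1}}$ is the image of $1-q^{n_i+n_{i+1}}\in\cQ[G_1]$, a special c-sequence all of whose entries $1-q^{(n_i+n_{i+1})k}$ are nonzero since $q\ge2$, hence invertible in $S$. By the definition of $S_c$ as the localization of $S_c^0$ at such elements, $\Psi_{n_*,d}(\cL)\in S_c$. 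As the sum runs over the finitely many compositions of $n$, it follows that $r_\al\in S_c$, which is the first claim.

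For the Poincar\'e function I would simply apply $P$, using that on generators it acts by $\cL\mto v^2$ (the element $q\in G_0$ has weight $w(q)=2$), hence $P(\cL^a,v)=v^{2a}$ and $P(1-\cL^m,v)=1-v^{2m}$. Multiplicativity then gives $P(\cL^{(g-1)\sum_{i<j}n_in_j},v)=v^{2(g-1)\sum_{i<j}n_in_j}$ and $P(\Psi_{n_*,d}(\cL),v)=\Psi_{n_*,d}(v^2)$, while the earlier corollary supplies $P(m_{n_i},v)=P_{n_i}(v)$. Substituting these into $P$ of the displayed formula for $r_\al$ reproduces exactly the claimed expression.

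The one step that is not purely formal is verifying that $\Psi_{n_*,d}(\cL)$ genuinely belongs to $S_c$: one must see that the fractional-looking exponents in Zagier's product combine into an integer power of $\cL$, and that the factors $1-\cL^{n_i+n_{i+1}}$ are invertible special c-sequences so that the localization defining $S_c$ applies. Once that is secured, everything else is a consequence of $P$ being a ring homomorphism together with the already-established facts that the $m_{n_i}$ are c-sequences with Poincar\'e functions $P_{n_i}$.
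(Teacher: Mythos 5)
Your overall route is exactly the paper's (implicit) one: Corollary \ref{cor:poincare of r} is deduced from the preceding corollary $r_\al=\sum\cL^{(g-1)\sum_{i<j}n_in_j}\Psi_{n_*,d}(\cL)\,m_{n_1}\dots m_{n_k}$ by checking the summands lie in $S_c$ and then applying the ring homomorphism $P:S_c\ar\cQ(v)$, using $P(\cL,v)=v^2$ and $P(m_n,v)=P_n(v)$; your localization argument for the factors $1-\cL^{n_i+n_{i+1}}$ (special c-sequences, invertible in $S$ since $q\ge2$) is the intended justification.

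However, the one computation you single out as delicate is wrong as stated. Write $s_i=n_1+\dots+n_i$ (so $s_0=0$, $s_k=n$) and let $E=\sum_{i=1}^{k-1}(n_i+n_{i+1})\set{s_id/n}$ be the total numerator exponent. Your claim $E=d(n_1+\dots+n_{k-1})$ fails: for $n=5$, $d=2$, $n_*=(1,1,1,1,1)$ one has $E=2\left(\tfrac25+\tfrac45+\tfrac15+\tfrac35\right)=4$, whereas $d\,s_{k-1}=8$. The telescoping you have in mind accounts only for the fraction-free part: since $n_i+n_{i+1}=s_{i+1}-s_{i-1}$ and $\set{s_id/n}=s_id/n-\lfloor s_id/n\rfloor$, one gets
$$E=\frac dn\sum_{i=1}^{k-1}(s_{i+1}-s_{i-1})s_i-\sum_{i=1}^{k-1}(s_{i+1}-s_{i-1})\big\lfloor s_id/n\big\rfloor
=d\,s_{k-1}-\sum_{i=1}^{k-1}(s_{i+1}-s_{i-1})\big\lfloor s_id/n\big\rfloor,$$
where the first sum telescopes to $ns_{k-1}$; you dropped the floor terms. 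Since those terms are integers, the conclusion you actually need survives: $E\in\cZ$ (indeed $E\ge0$), hence $\Psi_{n_*,d}(\cL)=\cL^{E}/\prod_{i}(1-\cL^{n_i+n_{i+1}})$ lies in $S_c$ and $P(\Psi_{n_*,d}(\cL),v)=\Psi_{n_*,d}(v^2)$. So the proof is correct after this local repair, and with it your argument establishes both the c-sequence claim and the displayed formula just as the paper intends.
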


%\begin{proof}
%It follows from Zagier formula that
%\begin{align*}
%r_\al
%&=\sum_{\la\in\lP^s_\al}(-1)^{l(\la)-1}\cL^{-\sum_{i<j}\ang{\la_i,\la_j}}
%   m_{\la_1}\dots m_{\la_{l(\la)}}\\
%&=\sum_{\stackrel{n_1,\dots,n_k>0}{\sum n_i=n}}
%  (-1)^{k-1}\cL^{(g-1)\sum_{i<j}n_in_j}m_{n_1}\dots m_{n_k}
%  \sum_{d_*\in D(n_*,d)}\cL^{\sum_{i<j}(d_in_j-d_jn_i)}\\
%&=\sum_{\stackrel{n_1,\dots,n_k>0}{\sum n_i=n}}
%  (-1)^{k-1}\cL^{(g-1)\sum_{i<j}n_in_j}Q_{n_*,d}(\cL^{-1})m_{n_1}\dots m_{n_k}.
%\end{align*}
%This implies that $r_\al$ is a c-sequence.
%It follows now, from $P(\cL,v)=v^2$, the second expression for $P(r_\al,v)$.
%Applying to it the Zagier formula, we get the first expression for
%$P(r_\al,v)$.
%\end{proof}
 %r_\alpha
\section{Number of stable bundles}\label{sec:stable}
In the last section we have seen how to compute the Poincar\'e 
function of the c-sequence $r_\al$. In this section we will 
prove a formula relating the Poincar\'e functions of $a_\al$ 
and $r_\al$. Without loss of generality, we may assume that
\al has nonnegative second coordinate or, equivalently, nonnegative
slope $\mu=\mu(\al)$. 

Recall, that with any curve $X$ over a finite field $k=\cF_q$, we have associated
the numbers $a_\al(K)$ and $s_{\al,r}(K)$, where $K/k$ is a finite
field extension and $r\ge1$. 
The totality of elements $a_\al(K)$ (respectively, $s_{\al,r}(K)$) 
for finite field extensions $K/k$
defines the element $a_\al$ (respectively, $s_{\al,r}$) in $S$. 

\begin{lmm}
We have
$$\psi_n(a_{\al})=\sum_{k\mid n}ks_{k\al,k}$$
in the \la-ring $S$.
\end{lmm}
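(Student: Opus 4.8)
The plan is to combine the definition of the Adams operations on the $\la$-ring $S$ with Lemma \ref{lmm:a and s}, which already expresses $a_\al(\cF_{q^n})$ as a sum over divisors of $n$. Recall that $\psi_n$ on $S$ is given by $\psi_n(s)=(s_{nk})_{k\ge1}$, so the $k$-th component of $\psi_n(a_\al)$ is simply $a_\al(\cF_{q^{nk}})$, the value of the c-sequence $a_\al$ at the field extension of degree $nk$. Thus proving the claimed identity in $S$ amounts to checking it componentwise: for each $k\ge1$ I must verify that
$$a_\al(\cF_{q^{nk}})=\sum_{r\mid n}r\,s_{r\al,r}(\cF_{q^k}).$$

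First I would fix $k$ and work over the base field $\cF_{q^k}$, renaming it as the ground field so as to reduce to the case $k=1$. The $r$-th component of $s_{r\al,r}$, regarded as an element of $S$ over $\cF_q$, is $s_{r\al,r}(\cF_{q^r})$ by the definition of the Adams operation, but in fact the right-hand side of the lemma as written uses the untwisted values $s_{r\al,r}(\cF_q)$; the key point is therefore to match these directly against $a_\al(\cF_{q^n})$. This is precisely the content of Lemma \ref{lmm:a and s}, which states $a_\al(\cF_{q^n})=\sum_{r\mid n}r\,s_{r\al,r}(\cF_q)$. So the strategy is to identify the $k$-th component of $\psi_n(a_\al)$ with $a_\al(\cF_{q^{nk}})$, apply Lemma \ref{lmm:a and s} with the ground field $\cF_{q^k}$ in place of $\cF_q$ (so that $n$ plays its role and the divisors $r\mid n$ index the sum), and read off $r\,s_{r\al,r}(\cF_{q^k})$ as exactly the $k$-th component of $ks_{k\al,k}$ — being careful here that the summation variable matches.

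The one genuine subtlety, and the step I expect to require the most care, is the bookkeeping of Adams operations versus the divisor sum: the element $s_{k\al,k}$ on the right-hand side has $n$-th component $s_{k\al,k}(\cF_{q^n})$, whereas Lemma \ref{lmm:a and s} produces values $s_{r\al,r}(\cF_q)$ at the base field. I would resolve this by expanding $\psi_n(a_\al)$ componentwise and comparing with the componentwise expansion of $\sum_{k\mid n}k s_{k\al,k}$, noting that the $j$-th component of $\sum_{k\mid n}k s_{k\al,k}$ is $\sum_{k\mid n}k\,s_{k\al,k}(\cF_{q^j})$; then Lemma \ref{lmm:a and s}, applied over the base field $\cF_{q^j}$ with degree $n$, gives exactly $a_\al(\cF_{q^{nj}})$, which is the $j$-th component of $\psi_n(a_\al)$. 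Once the indices are aligned this way the identity falls out immediately, so the proof is essentially a componentwise reduction to the already-established Lemma \ref{lmm:a and s}, and the only real work is to keep the two bookkeeping conventions straight.
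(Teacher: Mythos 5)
Your proof is correct and is essentially the paper's own argument: both verify the identity componentwise in $S$, identifying the $j$-th component of $\psi_n(a_\al)$ with $a_\al(\cF_{q^{nj}})$ and applying Lemma \ref{lmm:a and s} with ground field $\cF_{q^j}$ to obtain $\sum_{k\mid n}k\,s_{k\al,k}(\cF_{q^j})$, the $j$-th component of the right-hand side. The only blemish is the mid-proof remark attributing the $r$-th component of $s_{r\al,r}$ to the Adams operation (it is just the definition of the sequence), but your final paragraph's bookkeeping supersedes this and is exactly right.
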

\begin{proof}
Let $k=\cF_q$.
The $r$-th component of $\psi_n(a_{\al})$ equals $a_\al(\cF_{q^{nr}})$
and by Lemma \ref{lmm:a and s} we have
$$a_\al(\cF_{q^{nr}})=\sum_{k\mid n}ks_{k\al,k}(\cF_{q^r}).$$
\end{proof}

We can apply Lemma \ref{lmm:power} to the result of the above lemma in the \la-ring $S\pser x$.

\begin{crl}\label{crl:pow a_al}
For any element $f\in 1+xS\pser x$, we have
$$\Pow(f,a_\al)=\prod_{k\ge1}\psi_k(f)^{s_{k\al,k}}.$$
\end{crl}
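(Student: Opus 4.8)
The plan is to combine the two immediately preceding results. The previous lemma establishes that in the $\la$-ring $S$ the Adams operations act on $a_\al$ by the formula $\psi_n(a_\al)=\sum_{k\mid n}ks_{k\al,k}$, and Lemma \ref{lmm:power} computes the power map $\Pow(f,g)$ in a complete filtered $\la$-ring in terms of auxiliary elements $g_k$ defined inductively by $\sum_{k\mid n}kg_k=\psi_n(g)$. The whole corollary is essentially the observation that these two formulas fit together: the $g_k$ produced by Lemma \ref{lmm:power} when $g=a_\al$ are exactly the elements $s_{k\al,k}$.

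First I would set up the correct ambient ring. The issue is that $S$ is not a complete filtered $\la$-ring in the sense required by Lemma \ref{lmm:power}, so I would work instead in $S\pser x$, which carries the complete filtered $\la$-ring structure from the filtration by total degree in $x$ (with $\psi_n(sx^m)=\psi_n(s)x^{nm}$ as defined in Section \ref{subsec:la-rings}). For $f\in 1+xS\pser x$ we have $f\in 1+F^1(S\pser x)$, so $\Pow(f,a_\al)$ is defined, where $a_\al\in S\hookrightarrow S\pser x$ sits in degree zero.

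Next I would identify the inductively defined elements. Lemma \ref{lmm:power} requires $g_k\in R$ satisfying $\sum_{k\mid n}kg_k=\psi_n(g)$ for all $n\ge1$, with $g=a_\al$. But the preceding lemma gives precisely $\psi_n(a_\al)=\sum_{k\mid n}ks_{k\al,k}$. Since the defining relation $\sum_{k\mid n}kg_k=\psi_n(a_\al)$ determines the $g_k$ uniquely (by M\"obius-type inversion: $g_n$ is determined once all $g_k$ with $k\mid n$, $k<n$, are known), matching the two expressions forces $g_k=s_{k\al,k}$ for every $k\ge1$. Substituting into the conclusion of Lemma \ref{lmm:power} gives $\Pow(f,a_\al)=\prod_{k\ge1}\psi_k(f)^{g_k}=\prod_{k\ge1}\psi_k(f)^{s_{k\al,k}}$, which is the claim.

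The main point to verify carefully — the only real obstacle — is the uniqueness argument that lets me pass from the two recursions having the same left-hand side $\psi_n(a_\al)$ to the conclusion $g_k=s_{k\al,k}$. Concretely I would argue by strong induction on $n$: from $\sum_{k\mid n}kg_k=\psi_n(a_\al)=\sum_{k\mid n}ks_{k\al,k}$ and the inductive hypothesis $g_k=s_{k\al,k}$ for all proper divisors $k\mid n$, the two top terms $ng_n$ and $ns_{n\al,n}$ must agree, and since we are in a $\cQ$-algebra we may divide by $n$ to conclude $g_n=s_{n\al,n}$. I would also note that the infinite product $\prod_{k\ge1}\psi_k(f)^{s_{k\al,k}}$ makes sense because $\psi_k(f)\in 1+F^k(S\pser x)$, so the factors converge to $1$ in the completion and the product is well defined.
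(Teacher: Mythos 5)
Your proof is correct and is exactly the paper's argument: the paper's entire justification is the one sentence preceding the corollary, namely that Lemma \ref{lmm:power} is applied in the $\la$-ring $S\pser x$ with $g=a_\al$, so that the lemma $\psi_n(a_\al)=\sum_{k\mid n}ks_{k\al,k}$ identifies $g_k=s_{k\al,k}$. You have merely spelled out the details (the filtered $\la$-ring structure on $S\pser x$, the uniqueness of the $g_k$, and convergence of the product) that the paper leaves implicit.
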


\begin{rmr}\label{rmr:poincare of moduli}
For any $\al\in\cN^*\xx\cZ$, one can construct
the moduli space $\lM(\al)$ of stable bundles over $X_{\ub k}$
having character \al, see \cite{HarNar1}.
This moduli space is defined over some finite field extension of $k$
(let us assume that over $k$ itself). 
The number of $\cF_{q^n}$-rational points of $\lM(\al)$
equals $a_\al(\cF_{q^n})$.
This implies that the virtual Poincar\'e polynomial of $\lM(\al)$ 
equals the Poincar\'e polynomial of the c-sequence $a_\al$. 
If $X$ is a curve over complex numbers then we can find its form
over some ring finitely generated over \cZ. This form can be reduced
to finite fields $\cF_p$ so that, for almost all $p$, the reduction 
is again a smooth projective curve. The virtual Poincar\'e
polynomials of the moduli spaces of stable vector bundles over these
curves coincide with the virtual Poincar\'e of the moduli space
of stable bundles on a complex curve. This means that the last invariants
are given by the Poincar\'e functions of the c-sequences
$a_\al$. In this section we prove a formula for these c-sequences
and their Poincar\'e polynomials.
\end{rmr}

For any $\al\in\cN^*\xx\cZ$, there are only finitely many 
isomorphism classes of semistable bundles on $X$ having character $\al$. 
Define the elements
$$R_\al(K)=\sum_{[M]\in\lE\sst_{X_K}(\al)}[M]$$
and
$$R_\mu(K)=\sum_{M\in\lE\sst_{X_K}(\mu)}[M]=[0]+\sum_{\mu(\al)=\mu}R_\al(K)$$
in $\what\lH_+(X_K)$. 
The elements $R_\mu(K)$ are invertible 
in $\what\lH_+(X_K)$.

\begin{lmm}(see {\cite[Lemma 3.4]{Rei1}})\label{lmm:inverse R}
We have
$$R_\mu(K)\inv=\sum_{M\in\lE\sst_{X_K}(\mu)}\ga_M[M],$$
where
$$\ga_M=
\begin{cases}
0&
\text{if }M\text{ is not polystable},\\
\prod_{S\in\lE\st_{X_K}(\mu)}(-1)^{m_S}(\#\End S)^{\binom{m_S}2}&
\text{if }M=\bigoplus_{S\in\lE\st_{X_K}(\mu)} S^{m_S}.
\end{cases}
$$
\end{lmm}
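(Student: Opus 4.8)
The plan is to verify directly that the proposed element is the (necessarily unique) inverse of $R_\mu(K)$. Write $\lA=\Coh_\mu X_K$, an abelian length category whose simple objects are the stable bundles in $\lE\st_{X_K}(\mu)$ and whose semisimple objects are exactly the polystable ones. Since $R_\mu(K)$ has invertible constant term $[0]$, it is invertible in $\what\lH_+(X_K)$ with a unique inverse, computed degree by degree because each graded piece of $\lA$ contains only finitely many isomorphism classes. Thus it suffices to check that $R_\mu(K)\cdot\big(\sum_{M}\ga_M[M]\big)=[0]$. Expanding with the Hall product $[M'][M]=\sum_L g^L_{M'M}[L]$, where $g^L_{M'M}=\#\{U\sbe L:U\iso M,\ L/U\iso M'\}$, and grouping the resulting subobjects by isomorphism type, the coefficient of $[L]$ collapses to $\sum_{U\sbe L}\ga_U$ (the quotient type being summed freely). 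So, with $\ga_0=1$ the empty product, the whole statement reduces to the identity
$$\sum_{U\sbe L}\ga_U=\de_{L,0}\qquad\text{for all }L\in\lA.$$
(If the opposite Hall convention is in force the argument is symmetric, with $\soc$ replaced by $\rad$ and subobjects by quotients.)

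Next I would exploit that $\ga_U=0$ unless $U$ is polystable, i.e. semisimple. A subobject $U\sbe L$ is semisimple precisely when $U\sbe\soc L$, so the sum reduces to one over subobjects of the semisimple object $V:=\soc L$. Because $\Hom(S,S')=0$ for non-isomorphic simples, every subobject of $V$ splits along the isotypic decomposition $V=\bigoplus_S V_S$ with $V_S\iso S^{n_S}$; combined with the multiplicativity $\ga_{\bigoplus_S W_S}=\prod_S\ga_{W_S}$ read off from the defining formula, this factorizes the sum as
$$\sum_{U\sbe V}\ga_U=\prod_S\Big(\sum_{W\sbe V_S}\ga_W\Big).$$
It then suffices to show that each factor vanishes whenever $n_S\ge1$, i.e. whenever $L\ne0$.

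For a fixed simple $S$, set $D=\End S$, a finite division ring (hence a field) by Schur's lemma, of order $d=\#\End S$. Subobjects $W\sbe S^{n}$ correspond to $D$-subspaces of $D^{n}$, the type $W\iso S^{m}$ being recorded by $\dim_D W=m$; the number of such subobjects of dimension $m$ is the Gaussian binomial $\qbinom{n}{m}_{d}$, while $\ga_{S^{m}}=(-1)^m d^{\binom{m}{2}}$. The factor is therefore
$$\sum_{m=0}^{n}\qbinom{n}{m}_{d}(-1)^m d^{\binom{m}{2}},$$
which by the finite $q$-binomial theorem is the value at $x=1$ of $\prod_{i=0}^{n-1}(1-d^{i}x)$, namely $\prod_{i=0}^{n-1}(1-d^{i})$. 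This vanishes for $n\ge1$ thanks to the $i=0$ factor $1-d^{0}=0$, and equals the empty product $1$ for $n=0$. Hence the factorized product over $S$ is $0$ whenever $V\ne0$ and $1$ when $V=0$, which is exactly $\de_{L,0}$.

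The routine points along the way are the invertibility and uniqueness in the completion, the convergence of the coefficient-of-$[L]$ computation (immediate from the grading), and checking that $\End S$ is a division ring so that isotypic subobjects are genuine $D$-subspaces. The one place where something nontrivial happens is the final step: the entire reduction is engineered so as to funnel everything into the single evaluation $\sum_m\qbinom{n}{m}_{d}(-1)^m d^{\binom{m}{2}}=0$, so I expect the main obstacle to be carrying out the socle-plus-isotypic reduction cleanly — in particular justifying that subobjects of $S^n$ are counted by Gaussian binomials over $D$ — so that this classical $q$-identity can be invoked.
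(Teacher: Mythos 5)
Correct, and essentially the paper's own route: the paper offers no argument of its own for this lemma, citing \cite[Lemma 3.4]{Rei1}, and the proof there (for quiver representations) is precisely your verification --- the non-polystable coefficients vanish, the sum reduces to the socle and its isotypic pieces, subobjects of $S^n$ are identified with $D$-subspaces of $D^n$ for the finite field $D=\End S$, and everything funnels into $\sum_{m=0}^n(-1)^m d^{\binom m2}\qbinom{n}{m}_d=\prod_{i=0}^{n-1}(1-d^i)=\delta_{n,0}$. Your transcription to $\Coh_\mu X_K$ is sound, since a slope-$\mu$ subsheaf of a semistable slope-$\mu$ bundle automatically has torsion-free, hence semistable slope-$\mu$, quotient, so the $\lA$-subobjects you sum over are exactly what the Hall structure constants of $\lH(\Coh X_K)$ count.
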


Define the generating functions
$$r_\mu(K)=1+\sum_{\mu(\al)=\mu}r_\al(K)x^\al,\quad 
a_\mu(K)=\sum_{\mu(\al)=\mu}a_\al(K) x^\al$$
in $\cQ\pser{x_1,x_2}$.
It is clear that $r_\mu(K)=\int R_\mu(K)$.
The totality of the elements $r_\mu(K)$ and $a_\mu(K)$
for finite field extensions $K/\cF_q$ defines the elements
$r_\mu,a_\mu\in S\pser{x_1,x_2}$. We can write also
$$r_\mu=1+\sum_{\mu(\al)=\mu}r_\al x^\al,\quad 
a_\mu=\sum_{\mu(\al)=\mu}a_\al x^\al.$$

Recall, that the element $\cL\in S$ is defined by $\cL=(q^k)_{k\ge1}$. This
is a c-sequence with a Poincar\'e polynomial $P(\cL,v)=v^2$. 
Define a new multiplication on $S\pser{x_1,x_2}$ 
by the formula
$$x^\al\circ x^\be=\cL^{-\ang{\al,\be}}x^{\al+\be}.$$
The new algebra is denoted by $S\tw_\cL\pser{x_1,x_2}$.

\begin{thr}
We have
$$r_\mu\circ\Exp\left(\frac{a_\mu}{1-\cL}\right)=1$$ 
in $S\tw_\cL\pser{x_1,x_2}$.
\end{thr}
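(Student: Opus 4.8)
The plan is to push the identity through the integration homomorphism into the Hall algebra, compute the inverse there explicitly, and then recognize the result as the right-hand side via the Heine formula and the power identity of Corollary~\ref{crl:pow a_al}. Since $\int$ is an algebra homomorphism into the twisted algebra (Lemma~\ref{lmm:integral}) and each $R_\mu(K)$ is invertible in $\what\lH_+(X_K)$, we have $\int(R_\mu(K)\inv)=r_\mu(K)^{\circ-1}$, and twisted inversion in $S\tw_\cL$ is componentwise; hence the theorem is equivalent to the assertion that $\Exp\big(\frac{a_\mu}{1-\cL}\big)$ equals the $\circ$-inverse $r_\mu^{\circ-1}$, whose $K$-component is $\int\!\big(R_\mu(K)\inv\big)$ for every finite extension $K/k$. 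First I would apply $\int$ termwise to the formula for $R_\mu(K)\inv$ in Lemma~\ref{lmm:inverse R}, obtaining $\int(R_\mu(K)\inv)=\sum_M\ga_M\frac1{\#\Aut M}x^{\ch M}$, the sum running over polystable $M$ of slope $\mu$ over $X_K$.

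The next step is to factor this sum as an ordinary (untwisted) product over the stable bundles $S\in\lE\st_{X_K}(\mu)$. Writing a polystable sheaf as $M=\bigoplus_S S^{m_S}$ with the $S$ pairwise non-isomorphic simple objects of $\Coh_\mu X_K$, both $\ga_M=\prod_S(-1)^{m_S}q_S^{\binom{m_S}2}$ and $\#\Aut M=\prod_S\#\GL_{m_S}(\End S)$ are multiplicative, while $\ch M=\sum_S m_S\ch S$ is additive; here $q_S=\#\End S$. Hence $\int(R_\mu(K)\inv)=\prod_{S}\big(\sum_{m\ge0}\frac{\ga_{S^m}}{\#\Aut S^m}x^{m\ch S}\big)$, and using $\#\GL_m(\End S)=q_S^{\binom m2}\prod_{j=1}^m(q_S^j-1)$ the per-bundle factor collapses (the $(-1)^m$ cancelling) to
$$\sum_{m\ge0}\frac{x^{m\ch S}}{\prod_{j=1}^m(1-q_S^j)}.$$

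Now I would expand the right-hand side with the \la-ring machinery. Using $\Exp(f+g)=\Exp(f)\Exp(g)$ and $\Pow(\Exp(h),g)=\Exp(gh)$, I write $\Exp\big(\frac{a_\mu}{1-\cL}\big)=\prod_\al\Pow\big(\Exp(\frac{x^\al}{1-\cL}),a_\al\big)$, and Corollary~\ref{crl:pow a_al} (applied with $x$ replaced by the monomial $x^\al$) turns each factor into $\prod_{k\ge1}\psi_k\big(\Exp(\frac{x^\al}{1-\cL})\big)^{s_{k\al,k}}$. The Heine formula (Lemma~\ref{lmm:heine}) gives $\Exp(\frac{x^\al}{1-\cL})=\sum_m[\infty,m]_\cL x^{m\al}$, whence $\psi_k\Exp(\frac{x^\al}{1-\cL})=\Exp(\frac{x^{k\al}}{1-\cL^k})$ since $\psi_k(\cL)=\cL^k$ and $\psi_k(x^{m\al})=x^{mk\al}$. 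Thus $\Exp\big(\frac{a_\mu}{1-\cL}\big)=\prod_{\al,k}\Exp(\frac{x^{k\al}}{1-\cL^k})^{s_{k\al,k}}$. Invoking Heine once more, the $K$-component of $\Exp(\frac{x^{k\al}}{1-\cL^k})$ is $\sum_m\frac{x^{mk\al}}{\prod_{j=1}^m(1-q_K^{kj})}$ with $q_K=\#K$; comparing with the per-bundle factor above for a stable $S$ with $\ch S=k\al$ and $\End S=\cF_{q_K^k}$ (so that $q_S=q_K^k$), the two coincide. Since by definition $s_{k\al,k}(K)$ counts precisely the stable bundles over $X_K$ of character $k\al$ with $k$-dimensional endomorphism field, and, by the remark following Proposition~\ref{prp:stable corresp}, $s_{\be,k}(K)=0$ unless $k\mid\be$, the product over $(\al,k)$ is exactly the product over all stable bundles grouped by $(\dim_K\End S,\ch S)$. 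This identifies the two sides componentwise, proving the theorem.

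The main obstacle is the bridge in the last paragraph: matching the \la-ring side, whose building blocks are the absolutely-stable counts repackaged through the Adams operations $\psi_k$ and the multiplicities $s_{k\al,k}$, with the purely arithmetic enumeration of all stable bundles over each $X_K$, whose endomorphism fields may be nontrivial extensions $\cF_{q_K^k}$ of $K$. The conceptual point, supplied by Proposition~\ref{prp:stable corresp} and Lemma~\ref{lmm:a and s}, is that a stable bundle with $k$-dimensional endomorphism field behaves exactly like a $\psi_k$-image, so that the $k$-th Adams operation in Corollary~\ref{crl:pow a_al} accounts precisely for endomorphism fields of degree $k$; getting this bookkeeping right, together with the sign identity $\prod_j(q_S^j-1)=(-1)^m\prod_j(1-q_S^j)$ reconciling $\ga_M$ with the Heine denominators, is the crux. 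A secondary point to verify is that $\Exp$ is formed with the standard (untwisted) \la-structure on $S\pser{x_1,x_2}$, so that the only twisted operation is the final multiplication by $r_\mu$; this is consistent precisely because $\int(R_\mu(K)\inv)$ factors as an untwisted product.
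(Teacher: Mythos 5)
Your proposal is correct and follows essentially the same route as the paper's proof: invert $R_\mu(K)$ in the Hall algebra via Lemma \ref{lmm:inverse R}, push through $\int$, factor over stable bundles into $[\infty,m]_{q_S}$-series, and match with $\Exp\bigl(\frac{a_\mu}{1-\cL}\bigr)$ via the Heine formula and Corollary \ref{crl:pow a_al}, using $s_{\be,k}=0$ unless $k\mid\be$ for the reindexing. The only cosmetic difference is that you expand both sides and meet in the middle, whereas the paper transforms $r_\mu\inv$ step by step into the exponential; your explicit bookkeeping of endomorphism-field degrees against the Adams operations, and the remark on the untwisted factorization, are exactly the points the paper's computation relies on.
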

\begin{proof}
Consider some finite field extension $K$ of $\cF_q$. The corresponding component
of $r_\mu$ is given by $r_\mu(K)=\int R_\mu(K)$.
Its inverse in $\cQ\tw_K\pser{x_1,x_2}$ is given,
in view of Lemma \ref{lmm:inverse R} and Lemma \ref{lmm:integral},
by the formula

\begin{align*}
\int R_\mu(K)\inv
&=\sum_{(m_S)_{S\in\lE\st}}\frac{\prod_{S\in\lE\st}(-1)^{m_S}(\#\End S)^{\binom{m_S}2}}
{\#\Aut(\oplus_{S\in\lE\st} S^{m_S})}x^{\sum_{S\in\lE\st} m_S\ch S}\\
&=\sum_{(m_S)_{S\in\lE\st}}\prod_{S\in\lE\st}
\left(\frac{(-1)^{m_S}(\#\End S)^{\binom{m_S}2}}{\#\GL_{m_S}(\End S)}x^{m_S\ch S}\right)\\
&=\prod_{S\in\lE\st}
\left(\sum_{m\ge0}\frac{(-1)^{m}(\#\End S)^{\binom{m}2}}{\#\GL_{m}(\End S)}x^{m\ch S}\right),
\end{align*}
where by $\lE\st$ we denote the set $\lE\st_{X_K}(\mu)$.
For any stable bundle $S$, the ring of endomorphisms $\End S$ is a finite field,
say $\cF_s$. We have
$$\frac{(-1)^m s^{\binom m2}}{\#\GL_m(\cF_s)}
=\prod_{i=1}^m(1-s^i)\inv=[\infty,m]_s.$$
This implies
\begin{multline*}
\int R_\mu(K)\inv
=\prod_{S\in\lE\st}\Big(\sum_{m\ge0}[\infty,m]_vx^{m\ch S}\big|_{v=\#\End S}\Big)\\
=\prod_{\stackrel{\mu(\al)=\mu}{r\ge1}}\Big(\sum_{m\ge0}\psi_r([\infty,m]_v)x^{m\al}\big|_{v=\#K}\Big)
^{s_{\al,r}(K)}.
\end{multline*}
It follows that we have in $S\tw_\cL\pser{x_1,x_2}$ 
\begin{multline*}
r_\mu\inv
=\prod_{\al,r}\Big(\sum_{m\ge0}\psi_r([\infty,m]_v)x^{m\al}\big|_{v=\cL}\Big)^{s_{\al,r}}
=\prod_{\al,r}\psi_r\Big(\sum_{m\ge0}[\infty,m]_vx^{m\al/r}\big|_{v=\cL}\Big)^{s_{\al,r}}\\
=\prod_{\al,r}\psi_r\Big(\sum_{m\ge0}[\infty,m]_vx^{m\al}\big|_{v=\cL}\Big)^{s_{r\al,r}}
\rEq^{\text{Cor. }\ref{crl:pow a_al}}
\prod_\al\Pow\Big(\sum_{m\ge0}[\infty,m]_vx^{m\al}\big|_{v=\cL},a_\al\Big)\\
=\prod_\al\Exp\Big(a_\al\Log\Big(\sum_{m\ge0}[\infty,m]_vx^{m\al}\big|_{v=\cL}\Big)\Big)
=\prod_\al\Exp\Big(a_\al\frac{x^\al}{1-\cL}\Big)
=\Exp\Big(\frac{a_\mu}{1-\cL}\Big),
\end{multline*}
where we have used the formula
$$\Exp\left(\frac x{1-\cL}\right)=\sum_{m\ge0}[\infty,m]_vx^m\big|_{v=\cL}$$
in $S\pser x$. It is obtained from Lemma \ref{lmm:heine} as follows. Let $R$ be a localization
of $\cQ[v]$ with respect to $v$ and $(1-v^k)$, $k\ge1$. It is a 
\la-subring of $\cQ(v)$ and all the components of the Heine formula
are contained in $R$. Now we apply the \la-ring homomorphism
$R\ar S$, $v\mto\cL$.
\end{proof}

Let us endow the algebra $\cQ(v)\pser{x_1,x_2}$ with a new
multiplication
$$x^\al\circ x^\be=v^{-2\ang{\al,\be}}x^{\al+\be}.$$
The new algebra is denoted by $\cQ(v)\tw_{v^2}\pser{x_1,x_2}$.

\begin{crl}
We have
$$\Big(1+\sum_{\mu(\al)=\mu}P(r_\al,v)x^\al\Big)
\circ\Exp\left(\frac{\sum_{\mu(\al)=\mu}P(a_\al,v)x^\al}{1-v^2}\right)=1$$ 
in $\cQ(v)\tw_{v^2}\pser{x_1,x_2}$.
\end{crl}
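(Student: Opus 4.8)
The plan is to obtain this Corollary by pushing the identity
$$r_\mu\circ\Exp\Big(\frac{a_\mu}{1-\cL}\Big)=1$$
of the preceding Theorem through the Poincar\'e function. First I would record that every sequence appearing genuinely lies in $S_c$, so that $P$ may be applied: the $r_\al$ are c-sequences by Corollary \ref{cor:poincare of r}, the element $\cL$ is a special c-sequence, and each $a_\al$ is a c-sequence because, by Remark \ref{rmr:poincare of moduli}, $a_\al=(\#\lM(\al)(\cF_{q^k}))_{k\ge1}$ is the point-count sequence of an algebraic variety and hence a c-sequence by the Weil conjectures (Remark \ref{rmr:c-seq of var}). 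Thus both $r_\mu$ and $a_\mu$ lie in $S_c\pser{x_1,x_2}$.

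Next I would extend the Poincar\'e function $P\colon S_c\ar\cQ(v)$ coefficientwise to a map $P\colon S_c\pser{x_1,x_2}\ar\cQ(v)\pser{x_1,x_2}$, $\sum s_\al x^\al\mto\sum P(s_\al)x^\al$. Since $P$ is a $\la$-ring homomorphism on $S_c$ and the $\la$-structures on both power series rings are the ones induced by $\psi_n(sx^\al)=\psi_n(s)x^{n\al}$, the extended $P$ is again a $\la$-ring homomorphism and respects the total-degree filtration. Two compatibilities then do all the work. First, because $P(\cL)=v^2$, the extension carries the twisted product to the twisted product: $P(x^\al\circ x^\be)=P(\cL^{-\ang{\al,\be}})x^{\al+\be}=v^{-2\ang{\al,\be}}x^{\al+\be}$, which is exactly $x^\al\circ x^\be$ in $\cQ(v)\tw_{v^2}\pser{x_1,x_2}$. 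Second, as a homomorphism of complete filtered $\la$-rings, $P$ commutes with $\Exp$, since $\Exp$ is built from the $\si$-operations that $P$ preserves.

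With these in hand the computation is immediate: applying $P$ to the Theorem gives $P(r_\mu)\circ\Exp\big(P(a_\mu/(1-\cL))\big)=1$, and expanding $P(r_\mu)=1+\sum_{\mu(\al)=\mu}P(r_\al,v)x^\al$ together with $P(a_\mu/(1-\cL))=\big(\sum_{\mu(\al)=\mu}P(a_\al,v)x^\al\big)/(1-v^2)$ yields the claimed identity. The one place deserving care --- the step I expect to be the real content rather than bookkeeping --- is justifying that $P$ truly induces a morphism of the completed twisted $\la$-rings that intertwines $\Exp$: one must confirm that $a_\al\in S_c$, so that $P$ is even defined on the relevant coefficients, and that the coefficientwise extension stays compatible with the Adams operations and the filtration, so that $P\circ\Exp=\Exp\circ P$ holds term by term in the completion.
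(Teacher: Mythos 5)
Your proposal is correct and is exactly the argument the paper leaves implicit: the corollary is obtained by applying the Poincar\'e function $P$ coefficientwise as a \la-ring homomorphism to the identity $r_\mu\circ\Exp\big(a_\mu/(1-\cL)\big)=1$, using $P(\cL)=v^2$ to match the twisted products and the fact that a filtered \la-ring homomorphism commutes with $\Exp$. Your extra care in checking that each $a_\al$ is a c-sequence (via the point counts of $\lM(\al)$ and the Weil conjectures, as in Remarks \ref{rmr:c-seq of var} and \ref{rmr:poincare of moduli}) is precisely the justification the paper relies on.
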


This formula allows us to compute the Poincar\'e polynomial of $a_\al$
using the formula for the Poincar\'e function of $r_\al$ (see Corollary
\ref{cor:poincare of r}). By Remark \ref{rmr:poincare of moduli}
this gives us the virtual Poincar\'e polynomials 
$$P(\lM(\al),v)=P(a_\al,v)$$
of the moduli spaces of stable sheaves on a complex curve.

\begin{rmr}
For any $\al\in\Ga$, let $\ub\lM(\al)$ denote the moduli space
of semistable sheaves on $X$ having character \al. Then we have
$$1+\sum_{\mu(\al)=\mu}P(\ub\lM(\al),v)x^\al
=\Exp\Big(\sum_{\mu(\al)=\mu}P(\lM(\al),v)x^\al\Big).$$ 
\end{rmr}

\begin{rmr}
Note that the series $a_\mu$ and $r_\mu$ and their Poincar\'e 
functions can be considered as series in one variable.
Indeed, there exists a unique $\ga\in\cN^*\xx\cZ$ having coprime components
and satisfying $\mu(\ga)=\mu$. We can write
$$r_\mu=\sum_{k\ge0}r_{k\ga}x^{k\ga},\quad 
a_\mu=\sum_{k\ge1}a_{k\ga}x^{k\ga}$$
and analogously for their Poincar\'e functions.
\end{rmr}

\begin{rmr}
In order to determine $P(a_\mu,v)$, we have to invert
$P(r_\mu,v)$ in the ring $\cQ(v)\tw_{v^2}\pser{x_1,x_2}$.
This can be reduced to the inversion in the usual
ring of power series in the following way.
Consider the map $\cQ(v)\pser x\ar\cQ(v)\tw_{v^2}\pser{x_1,x_2}$
$$x^k\mto v^{-\ang{\ga,\ga}k^2}x^{k\ga},$$
which is obviously a homomorphism of algebras.
The element $P(r_\mu,v)$ is contained in the image of this homomorphism 
and we just have to invert the preimage of $P(r_\mu,v)$
in $\cQ(v)\pser x$.
\end{rmr}

%Let us analyze the second factor of the formula. 
%There is a \la-ring homomorphism 
%$\cQ(q)\ar S$, $q\mto\cL$. 
%Applying it to \ref{} we get
%$$\Exp\left(\frac x{1-\cL}\right)=\sum_{m\ge0}[\infty,m]x^m\big|_{q=\cL}$$
%in $S\pser x$. It follows that
%\begin{multline*}
%\Exp\left(\frac{a_\mu}{1-\cL}\right)
%=\prod_{\al}\Exp\left(\frac{a_\al x^\al}{1-\cL}\right)\\
%=\prod_{\al}\Exp\Big(a_\al\Log\Big(\sum_{m\ge0}[\infty,m]x^{m\al}\big|_{q=\cL}\Big)\Big)
%=\prod_\al\Pow\Big(\sum_{m\ge0}[\infty,m]x^{m\al}\big|_{q=\cL},a_\al\Big)
%\end{multline*}
%in $S\pser{x_1,x_2}$.
  %a_\alpha
\section{On the virtual Hodge polynomial and motive of moduli space}\label{sec:conj}
As we mentioned in introduction, the recursive formula of
Harder, Narasimhan, Desale and Ramanan (see Theorem \ref{thr:HNDR})
for the Poincar\'e polynomials of the moduli spaces 
$\lM(n,d)$ (with coprime $n$ and $d$) of stable bundles on a curve 
was extended by Earl and Kirwan \cite{EarlKirw1}
to the recursive formula for the Hodge numbers of $\lM(n,d)$ for coprime $n$ and $d$.
Let us recall their result. 
We endow the semigroup $\Ga=\cN^*\xx\cZ$ with a total preorder as 
in Section \ref{sec:HN rec}.

\begin{thr}
Let $X$ be a curve of genus $g$ over \cC and, for any $\al\in\Ga$,
let $\lM(\al)$ be the moduli space of stable bundles on $X$ having character
\al. Define the rational functions $R_\al\in\cQ(u,v)$, $\al=(n,d)\in\Ga$, inductively 
by the formula
$$\sum_{\la\in\lP_\al^i}(uv)^{-\sum_{i<j}\ang{\la_i,\la_j}}R_{\la_1}\dots R_{\la_{l(\la)}}
=(u^nv^n-1)\prod_{i=1}^n\frac{(1-u^iv^{i-1})^g(1-v^iu^{i-1})^g}{(1-u^iv^i)^2}.$$
Then the Hodge polynomial of $\lM(n,d)$, for coprime $n$ and $d$, equals
$$(uv-1)R_{(n,d)}(u,v).$$
\end{thr}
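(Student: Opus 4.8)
The plan is to reproduce, in the Hodge-theoretic setting, the same passage from semistable to stable invariants that we carried out for the virtual Poincar\'e polynomials in Section \ref{sec:stable}, replacing the role of the Lefschetz c-sequence $\cL$ and the variable $v^2$ by the product $uv$. The key structural fact is the recursive formula displayed in the theorem, which is the exact Hodge analogue of Theorem \ref{thr:HNDR}: the left-hand side is a sum over strictly increasing Harder--Narasimhan sequences weighted by $(uv)^{-\sum_{i<j}\ang{\la_i,\la_j}}$, and the right-hand side is the Hodge polynomial of the stack of semistable bundles, which specializes (under $u,v\mapsto v$) to $P_n(v)$ from Corollary \ref{cor:poincare of r}. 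First I would recognize that this recursion has precisely the shape treated by Remark \ref{rmr:zagier formula}, with $t=uv$ and the skew pairing $\ang{(n,d),(n',d')}=nd'-dn'$. Solving it by Zagier's formula would express $R_{(n,d)}$ explicitly in terms of the $m$-type semistable generating functions via the coefficients $\Psi_{n_*,d}$.

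The second step is to mimic the argument of the main theorem of Section \ref{sec:stable}. I would introduce the Hodge-polynomial generating functions $r_\mu^{uv}$ and $a_\mu^{uv}$ in a twisted power series ring $\cQ(u,v)\tw\pser{x_1,x_2}$, with twist $x^\al\circ x^\be=(uv)^{-\ang{\al,\be}}x^{\al+\be}$, so that the $R_\al$ assemble into $r_\mu^{uv}$. The content is then the relation
$$r_\mu^{uv}\circ\Exp\left(\frac{a_\mu^{uv}}{1-uv}\right)=1,$$
which is the verbatim Hodge-theoretic shadow of the displayed theorem in Section \ref{sec:stable}; here $a_\mu^{uv}$ collects the Hodge polynomials $(uv-1)R_{(n,d)}$ of the moduli spaces $\lM(n,d)$. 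To justify this I would invoke the same Hall-algebra computation: the inverse of $r_\mu$ is computed through Lemma \ref{lmm:inverse R} and the Heine formula (Lemma \ref{lmm:heine}), where the specialization $v\mapsto\cL$ is replaced by $v\mapsto uv$ inside the Heine identity. For the coprime case the crucial simplification is that every stable bundle $S$ has $\End S$ a finite field and, over $\conjug k$, absolutely stable bundles of coprime character satisfy $\End S=k$, so the $\Exp$/$\Log$ formalism collapses and the relation inverts cleanly to give $a_\al^{uv}=(uv-1)R_\al$ directly, with no higher $s_{\al,r}$ contributions.

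The third step is to identify the output of this inversion with the classical Earl--Kirwan answer. Having solved the recursion by Zagier's method, one obtains $R_{(n,d)}$ as an explicit sum over compositions $n=n_1+\dots+n_k$, weighted by $\Psi_{n_*,d}(uv)$ and products of the semistable Hodge polynomials on the right-hand side; multiplying by $(uv-1)$ yields the Hodge polynomial of $\lM(n,d)$. I would then confirm that this agrees with the formula in \cite{EarlKirw1}, either by matching the explicit Zagier-type expressions term by term or, more conceptually, by noting that under $u=v$ the whole calculation degenerates to the virtual Poincar\'e polynomial formula of the Corollary in Section \ref{sec:stable}, which is already known to be correct.

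The main obstacle I anticipate is not the formal \la-ring bookkeeping, which is parallel to what has already been done, but rather the legitimacy of transporting that bookkeeping to the Hodge setting over $\cC$. Over a finite field the entire argument is powered by the Hall-algebra identity $\int R_\mu(K)\inv=\prod_S(\dots)$ and by counting $\#\Aut$, $\#\End S$; over $\cC$ there is no such counting, so one must argue that the Hodge polynomials of the stacks of semistable bundles satisfy the same multiplicative and $\Exp/\Log$ relations. For coprime $n,d$ this is cleaner because semistability equals stability and the moduli stack is a $\Gm$-gerbe over a smooth projective moduli space, so the Hodge polynomial of the stack is $(uv-1)\inv$ times that of $\lM(n,d)$; this is exactly the source of the factor $(uv-1)$ in the statement. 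I would therefore spend most of the effort verifying that the Earl--Kirwan recursion genuinely encodes the twisted-product inversion above, and that the coprimality hypothesis is precisely what guarantees the collapse of the general $s_{\al,r}$ formalism to the single term $a_\al=(uv-1)R_\al$.
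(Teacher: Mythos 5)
You should first be aware that the paper does not prove this theorem: it is quoted verbatim from Earl and Kirwan \cite{EarlKirw1} (``Let us recall their result''), where it is proved by extending the Atiyah--Bott gauge-theoretic, equivariantly perfect stratification method to Hodge structures. The only content the paper adds is the subsequent Remark, observing that Earl--Kirwan's functions $F_\al$ satisfy a slightly different recursion and that the two forms are equivalent via Poincar\'e duality (the same $\Phi_{n_*,d}(t\inv)=\Psi_{\ub n_*,d}(t)$-type symmetry used in Remark \ref{rmr:zagier formula}). So a proof attempt must either reprove Earl--Kirwan or reduce to it non-circularly, and yours does neither: your third step, ``confirm that this agrees with the formula in \cite{EarlKirw1},'' assumes exactly the statement to be proved.

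The deeper gap is in your second step. The identity $r_\mu^{uv}\circ\Exp\bigl(a_\mu^{uv}/(1-uv)\bigr)=1$ that you propose to ``invoke'' is precisely the first Conjecture of Section \ref{sec:conj}, which the paper explicitly leaves open; it cannot be obtained by rerunning the Hall-algebra computation with $v\mapsto uv$, because that computation (Lemma \ref{lmm:integral}, Lemma \ref{lmm:inverse R}, the $\#\Aut M$ and $\#\End S$ counts, the $s_{\al,r}$ bookkeeping) lives over a finite field and has no counterpart over $\cC$. More fundamentally, point counts over finite fields determine only the weight specialization --- effectively one variable, the combination $uv$ --- and can never produce the two-variable Hodge polynomial; this is exactly why the paper's own method stops at virtual Poincar\'e polynomials and why the Hodge and motivic statements are conjectures. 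For the same reason your fallback verification at $u=v$ only checks a one-variable shadow and cannot detect two-variable discrepancies. What is genuinely salvageable in your sketch is the gerbe observation: for coprime $(n,d)$, semistable equals stable and the stack is a \Gm-gerbe over $\lM(n,d)$, which is the correct source of the factor $(uv-1)$. But to turn that into a proof you would still need, over $\cC$, (a) the Hodge-theoretic formula for the invariant of the stack of all bundles (the right-hand side of the recursion, via the Atiyah--Bott description of its cohomology or the motivic formula of \cite{BehrDhil1}), and (b) the compatibility of the Harder--Narasimhan stratification with virtual Hodge polynomials, giving the $(uv)^{-\sum_{i<j}\ang{\la_i,\la_j}}$ weights --- i.e., essentially the argument of del Ba{\~n}o \cite{Bano1} or Earl--Kirwan themselves, not the finite-field machinery of Sections \ref{sec:c-seq}--\ref{sec:stable}.
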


\begin{rmr}
The formula in \cite{EarlKirw1} is actually slightly different. Namely, they define
the rational functions $F_\al(u,v)$, $\al\in\Ga$, inductively by the
formula
$$\sum_{\la\in\lP_i}(uv)^{-\sum_{i<j}\ang{\la_j,\la_i}}F_{\la_1}\dots F_{\la_{l(\la)}}
=(1-u^nv^n)\prod_{i=1}^n\frac{(1-u^iv^{i-1})^g(1-v^iu^{i-1})^g}{(1-u^iv^i)^2}$$
and prove that the Hodge polynomial of $\lM(n,d)$, for coprime $n$ and $d$, equals
$(1-uv)F_{(n,d)}(u,v)$. Using the Poincar'e duality, these two formulas 
can be shown to be equivalent.
\end{rmr}

Using the Zagier formula (see Theorem \ref{thr:zagier formula}), we get an explicit formula

\begin{crl}\label{crl:hodge,zagier}
Let $X$ be a curve of genus $g$ over \cC and, for any $\al\in\Ga$,
let $\lM(\al)$ be the moduli space of stable bundles on $X$ having character
\al. Define the rational functions $R_\al\in\cQ(u,v)$, $\al=(n,d)\in\Ga$, 
by the formula
\begin{align*}
R_\al(u,v)
&=\sum_{\stackrel{n_1,\dots,n_k>0}{\sum n_i=n}}
(uv)^{(g-1)\sum_{i<j}n_in_j}\Psi_{n_*,d}(uv)P_{n_1}\dots P_{n_k},
\end{align*}
where
$$P_n(u,v)=(u^nv^n-1)\prod_{i=1}^{n}\frac{(1-u^iv^{i-1})^g(1-v^iu^{i-1})^g}{(1-u^iv^i)^2},$$
$$\Psi_{n_*,d}(t)=\prod_{i=1}^{k-1}\frac
{t^{(n_i+n_{i+1})\set{(n_1+\dots+n_i)d/n}}}
{1-t^{n_i+n_{i+1}}}.$$
Then the Hodge polynomial of $\lM(n,d)$, for coprime $n$ and $d$, equals
$$(uv-1)R_{(n,d)}(u,v).$$
\end{crl}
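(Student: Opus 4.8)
The plan is to observe that this corollary merely records the explicit, Zagier-type solution of the recursion defining $R_\al$ in the preceding theorem, and that the computation is identical, mutatis mutandis, to the proof of the corresponding theorem of Section~\ref{sec:semist}. That recursion reads
$$\sum_{\la\in\lP_\al^i}(uv)^{-\sum_{i<j}\ang{\la_i,\la_j}}R_{\la_1}\dots R_{\la_{l(\la)}}=P_n(u,v),\qquad\al=(n,d),$$
where $\ang{\cdot\,,\cdot}$ is the full Euler pairing $\ang{(n,d),(n',d')}=nd'-dn'+(1-g)nn'$ of Section~\ref{subsec:chern} and $P_n(u,v)$ is the product on the right-hand side. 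Since Zagier's formula (Theorem~\ref{thr:zagier formula} and Remark~\ref{rmr:zagier formula}) is stated for the truncated pairing $\ang{(n,d),(n',d')}'=nd'-dn'$, the first task is to absorb the $(1-g)nn'$ contribution into a change of normalization.

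Following the Section~\ref{sec:semist} proof, I would set $a_\al=(uv)^{n^2(1-g)/2}R_\al$ and $b_n=(uv)^{n^2(1-g)/2}P_n(u,v)$ for $\al=(n,d)$. A direct computation using $\sum_i n_i^2=n^2-2\sum_{i<j}n_in_j$ together with the splitting $\ang{\la_i,\la_j}=\ang{\la_i,\la_j}'+(1-g)n_in_j$ shows that the normalizing powers of $(uv)$ recombine and the recursion collapses to
$$b_n=\sum_{\la\in\lP_\al^i}(uv)^{-\sum_{i<j}\ang{\la_i,\la_j}'}a_{\la_1}\dots a_{\la_{l(\la)}},\qquad\al=(n,d),$$
which is precisely the hypothesis of Remark~\ref{rmr:zagier formula} with $t=uv$.

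Applying that remark gives $a_\al=\sum_{n_1+\dots+n_k=n}\Psi_{n_*,d}(uv)\,b_{n_1}\dots b_{n_k}$, and substituting back $b_{n_i}=(uv)^{n_i^2(1-g)/2}P_{n_i}$ and $a_\al=(uv)^{n^2(1-g)/2}R_\al$, the exponent bookkeeping reverses via the same identity $\sum_i n_i^2=n^2-2\sum_{i<j}n_in_j$ and reinstates exactly the factor $(uv)^{(g-1)\sum_{i<j}n_in_j}$. This yields the closed formula for $R_\al$ asserted in the corollary. Since the explicitly defined $R_\al$ thereby coincides with the recursively defined $R_\al$ of the preceding theorem, the final statement about the Hodge polynomial of $\lM(n,d)$ is inherited with no additional argument.

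The step I expect to be the real obstacle is not the exponent arithmetic, which is routine, but the justification for invoking Zagier's formula over $\cQ(u,v)$: Theorem~\ref{thr:zagier formula} and Remark~\ref{rmr:zagier formula} are phrased for a complete normed ring with $|t|>1$, whereas here $t=uv$ is a product of formal indeterminates. I would resolve this either by completing $\cQ(u,v)$ with respect to a valuation in which $|uv|>1$---so that the infinite sum over $\lP_\al^i$ converges, the finite-sum expression is its genuine limit, and the factors $1-(uv)^{n_i+n_{i+1}}$ are invertible---or, more concretely, by specializing $u$ and $v$ to complex numbers with $|uv|>1$, applying the formula numerically, and concluding that the resulting identity of rational functions holds identically because it holds on a Zariski-dense subset of $\cC^2$.
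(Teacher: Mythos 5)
Your proposal is correct and takes essentially the same route as the paper: the corollary is obtained there by transplanting the Section~\ref{sec:semist} argument verbatim, i.e.\ the renormalization $a_\al=(uv)^{n^2(1-g)/2}R_\al$, $b_n=(uv)^{n^2(1-g)/2}P_n$ collapsing the Euler pairing to the truncated one, followed by Remark~\ref{rmr:zagier formula} with $t=uv$ and the reverse exponent bookkeeping. Your final paragraph on justifying $|uv|>1$ (via a completion of $\cQ(u,v)$ or by specializing $u,v$ and using density) soundly addresses a convergence point that the paper itself leaves implicit.
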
 

We can formulate now the conjectural formula for the virtual Hodge
polynomials of the moduli spaces $\lM(n,d)$.
Denote by $\cQ(u,v)\tw\pser{x_1,x_2}$
the ring obtained from $\cQ(u,v)\pser{x_1,x_2}$ by changing the multiplication
$$x^\al\circ x^\be=(uv)^{-\ang{\al,\be}}x^{\al+\be}.$$

\begin{conj}
With the notations as above, let $A_\al\in\cQ[u,v]$ be the virtual
Hodge polynomial of the moduli space $\lM(\al)$, $\al\in\Ga$. For any $\mu\in\cQ$,
define the generating functions $A_\mu=\sum_{\mu(\al)=\mu}A_\al x^\al$,
$R_\mu=1+\sum_{\mu(\al)=\mu}R_\al x^\al$ in $\cQ(u,v)\pser{x_1,x_2}$.
Then we have
$$R_\mu\circ\Exp\left(\frac{A_\mu}{1-uv}\right)$$
in $\cQ(u,v)\tw\pser{x_1,x_2}$.
\end{conj}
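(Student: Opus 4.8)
The plan is to lift the entire finite-field computation of Section~\ref{sec:stable} to the level of motives, replacing the Hall algebra of $\Coh X$ over $\cF_q$ by a \emph{motivic} Hall algebra of $\Coh X$ for $X/\cC$, and replacing the integration map $\int$ by a motivic integration map into a twisted power series ring over the Grothendieck ring of motives $K_0(\mathrm{Mot}_\cC)$, equipped with the $\la$-ring structure whose $\si$-operations are symmetric products. In this ring the Lefschetz motive $\cL=[\cA^1]$ plays the role of the c-sequence $\cL=(q^k)_{k\ge1}$, and the virtual Hodge realization is a $\la$-ring homomorphism $K_0(\mathrm{Mot}_\cC)\ar\cQ[u,v]$ sending $\cL\mto uv$, where the target carries the grading-scaling structure $\psi_r(u^av^b)=u^{ra}v^{rb}$ (this realization being a $\la$-ring map is the Macdonald formula for Hodge numbers of symmetric products). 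First I would introduce motivic generating series $R_\mu^{\mathrm{mot}}$ and $A_\mu^{\mathrm{mot}}$ built from the motivic classes of the stacks of semistable bundles and from the classes $[\lM(\al)]$, arranged so that applying the realization recovers $R_\mu$ and $A_\mu=\sum_{\mu(\al)=\mu}A_\al x^\al$.

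The core identity to establish is the motivic analogue of the theorem of Section~\ref{sec:stable}, namely $R_\mu^{\mathrm{mot}}\circ\Exp\!\big(A_\mu^{\mathrm{mot}}/(1-\cL)\big)=1$ in $K_0(\mathrm{Mot}_\cC)\tw_\cL\pser{x_1,x_2}$. The strategy would mirror the finite-field proof step for step: I would invert $R_\mu^{\mathrm{mot}}$ through a motivic analogue of Lemma~\ref{lmm:inverse R}, writing the inverse as a sum over polystable objects weighted by signs and by motivic classes of $\GL_m$ over the endomorphism fields. The scalar $\frac{(-1)^m s^{\binom m2}}{\#\GL_m(\cF_s)}=[\infty,m]_s$ would be replaced by the motivic expression $\prod_{i=1}^m(1-\cL^i)\inv$, realized through the $\si$-operations, after which Lemma~\ref{lmm:heine} again produces $\Exp\!\big(x/(1-\cL)\big)$. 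Grouping the stable bundles $S$ (for which $\End S$ is a field) and invoking a motivic form of Corollary~\ref{crl:pow a_al}, so that the symmetric-product $\si$-operations on the stack of stable bundles reproduce $\Pow$, should yield the displayed formula; applying the Hodge realization would then prove the conjecture.

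The hard part, and the reason the statement is only conjectural, lies on the \emph{stable} side. On the semistable side the relevant motive is under control: Behrend and Dhillon \cite{BehrDhil1} compute the motive of the stack of all bundles and del Ba{\~n}o \cite{Bano1} extracts the semistable part via the Harder--Narasimhan filtration, so that realizing $R_\mu^{\mathrm{mot}}$ to $R_\mu$ is legitimate. What is missing is the passage from the motivic \emph{inverse} to the genuine moduli space $\lM(\al)$ for non-coprime $(n,d)$. Over a finite field this passage is legitimized by Proposition~\ref{prp:stable corresp} and Lemma~\ref{lmm:a and s}, which count points with multiplicity and identify $a_\al$ with the point-count of $\lM(\al)$, after which the c-sequence formalism forces the virtual Poincar\'e polynomial. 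For the virtual Hodge polynomial there is no such automatic bridge: one would have to know that the mixed Hodge structure on $H^*_c(\lM(\al))$ is of genuinely motivic type and that the Hodge realization is compatible with the motivic Hall-algebra inversion on these singular, non-proper spaces. Establishing such a motivic inversion over $\cC$ in arbitrary rank and degree — essentially a full motivic lift of the Harder--Narasimhan/Zagier recursion together with the motivic form of Lemma~\ref{lmm:inverse R} — is the principal obstacle, and is exactly what keeps the assertion a conjecture.
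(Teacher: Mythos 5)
You are asked to prove a statement that the paper itself presents as a \emph{conjecture}: the paper contains no proof of it, only the motivating analogy with the finite-field theorem of Section~\ref{sec:stable} (note also that the conjecture as printed omits ``$=1$'' on the right-hand side, which both you and the paper implicitly read into it). So your proposal cannot be checked against a paper proof; what can be checked is whether your strategy matches the paper's intent, and it largely does. The companion motivic conjecture at the end of Section~\ref{sec:conj}, formulated in $S\tw_\cL\pser{x_1,x_2}$ with $\si$-operations given by symmetric products, is exactly your $R_\mu^{\mathrm{mot}}\circ\Exp\big(A_\mu^{\mathrm{mot}}/(1-\cL)\big)=1$, and the Hodge statement would indeed follow by applying the realization $\cL\mto uv$, which is a $\la$-ring homomorphism by Macdonald's formula for Hodge numbers of symmetric products. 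Your identification of the semistable side as the controlled part (Behrend--Dhillon for the stack of all bundles, del Ba{\~n}o via the Harder--Narasimhan filtration for $R_\al$) also agrees with the paper's discussion.

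One step in your sketch deserves a sharper warning, however: the proposed ``motivic form of Corollary~\ref{crl:pow a_al}'' does not transfer in the way you suggest. Over $\cF_q$, the passage from the product over stable bundles $\prod_S(\cdots)$ to $\Pow(\cdot,a_\al)$ rests entirely on Galois descent: Lemma~\ref{lmm:a and s} and Proposition~\ref{prp:stable corresp} convert stable bundles with $\End S=\cF_{q^r}$, $r>1$, into Adams-twisted factors $\psi_r(\cdot)^{s_{r\al,r}}$. Over $\cC$ every stable bundle is simple, the numbers $s_{\al,r}$ vanish for $r>1$, and there is no Frobenius bookkeeping to exploit. Instead the $\Exp$ must be produced geometrically: one needs to know that the classes of the loci of polystable objects $\bigoplus_i S_i^{m_i}$, together with their automorphism groups $\prod_i\GL_{m_i}$, assemble into symmetric powers of the coarse moduli space $\lM(\al)$ --- precisely the $\si$-operations of the motivic $\la$-ring --- and this is delicate for non-coprime $(n,d)$ because $\lM(\al)$ is only a coarse space, the strictly semistable locus is singular, and the stable locus of the stack is a $\Gm$-gerbe over $\lM(\al)$ (whence the factor $1/(1-\cL)$). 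So the principal obstacle is this motivic inversion identity itself, at the level of stacks versus coarse moduli, rather than --- as your closing paragraph has it --- the compatibility of the Hodge realization with an already-established motivic identity. With that reattribution of where the difficulty sits, your outline is a faithful account of why the statement is, and remains in the paper, a conjecture.
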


Let $S$ be the Grothendieck ring of motives over \cC, completed
and localized in some appropriate way.
It is a \la-ring, with \si-operations
given by symmetric products \cite{Getz1}. 
For any $\al=(n,d)\in\Ga$, we can define the
motive $M_\al$ (respectively, $R_\al$) of the stack of
vector bundles (respectively, stack of semistable vector bundles) on $X$ having
Chern character \al.
We define $A_\al$ to be the motive of $\lM(\al)$.
It is proved in \cite{BehrDhil1} that
$$M_\al=\Big(Z(C,t)(1-t)(1-\cL t)\Big)\Big|_{t=1}\cL^{(n^2-1)(g-1)}\prod_{i=2}^nZ(C,\cL^{-i}),$$
where $Z(C,t)$ is a motivic zeta-function of $C$, given by $\Exp([C]t)$.
The following result is due to del Ba{\~n}o \cite{Bano1},
although he uses a different terminology

\begin{thr}
For any $\al=(n,d)\in\Ga$, we have
$$M_\al=M_n=\sum_{\la\in\lP_\al^i}\cL^{-\sum_{i<j}\ang{\la_i,\la_j}}
R_{\la_1}\dots R_{\la_{l(\la)}}$$
and
$$R_\al
=\sum_{\stackrel{n_1,\dots,n_k>0}{\sum n_i=n}}
\cL^{(g-1)\sum_{i<j}n_in_j}\Psi_{n_*,d}(\cL)M_{n_1}\dots M_{n_k}.$$
\end{thr}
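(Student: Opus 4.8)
The plan is to treat the two displayed identities separately. The first is the motivic Harder--Narasimhan recursion, expressing the motive $M_\al$ of the stack of all bundles through the motives $R_\be$ of the stacks of semistable bundles; the second is its solution, obtained by feeding the first into the Zagier formula exactly as in Section~\ref{sec:semist}. So I would first establish the recursion and then invert it.

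For the recursion I would stratify the stack of all bundles of character $\al=(n,d)$ by Harder--Narasimhan type. Each bundle has a unique HN filtration with semistable successive quotients of pairwise distinct slopes; listing these quotients in increasing order of slope records a sequence $\la=(\la_1,\dots,\la_k)\in\lP_\al^i$. Sending a bundle to the tuple of its HN quotients exhibits the stratum of type $\la$ as lying over the product of the stacks of semistable bundles of characters $\la_1,\dots,\la_k$, whose motive is $R_{\la_1}\dots R_{\la_{l(\la)}}$, the fiber being the groupoid of iterated extensions assembling fixed quotients $(F_1,\dots,F_k)$ into a bundle. The semistability ordering forces the relevant $\Hom$-groups between distinct HN quotients to vanish, so that the HN filtration is canonical and the extension groupoid splits step by step; each step contributes $\cL^{\dim\Ext^1-\dim\Hom}$, and Riemann--Roch in the form $\hi(F,G)=\ang{\ch F,\ch G}$ of Section~\ref{subsec:chern} collects these into the single factor $\cL^{-\sum_{i<j}\ang{\la_i,\la_j}}$. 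Summing over strata gives the first formula; the equality $M_\al=M_n$ (independence of $d$) is the explicit formula of Behrend and Dhillon \cite{BehrDhil1}. This recursion is due to del Ba{\~n}o \cite{Bano1}.

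To invert it I would mirror Section~\ref{sec:semist} verbatim. Set $a_\al=\cL^{n^2(1-g)/2}R_\al$ and $b_n=\cL^{n^2(1-g)/2}M_n$ for $\al=(n,d)$. Writing $\ang{\la_i,\la_j}=\ang{\la_i,\la_j}'+(1-g)n_in_j$ with $\ang{(n,d),(n',d')}'=nd'-dn'$, and using $\sum_{i<j}n_in_j=\tfrac12(n^2-\sum_i n_i^2)$, the genus contributions are absorbed into the normalization and the first formula turns into $b_n=\sum_{\la\in\lP_\al^i}\cL^{-\sum_{i<j}\ang{\la_i,\la_j}'}a_{\la_1}\dots a_{\la_{l(\la)}}$. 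This is precisely the hypothesis of Remark~\ref{rmr:zagier formula} with $t=\cL$, which then yields $a_\al=\sum_{n_1+\dots+n_k=n}\Psi_{n_*,d}(\cL)b_{n_1}\dots b_{n_k}$; undoing the normalization and recombining the powers of $\cL$, whose exponent collapses to $(g-1)\sum_{i<j}n_in_j$, produces the second formula.

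The main obstacle is not the bookkeeping but the setting in which the inversion is legitimate. Since $\lP_\al^i$ is infinite, the recursion is an infinite sum, so invoking Remark~\ref{rmr:zagier formula} presupposes that the Grothendieck ring of motives has been completed and localized into a complete normed \la-ring in which $\cL$ is invertible of norm larger than $1$ and each $1-\cL^m$ is invertible, and the convergence hypotheses must be verified there. This is exactly the role played in the finite-field case by the ring $S_c$ of c-sequences, and it is what the phrase ``completed and localized in some appropriate way'' is meant to supply.
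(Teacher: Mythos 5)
Your proof is correct, and where the paper argues at all it takes the same route. Note first that the paper contains no proof of this particular theorem: it attributes both identities to del Ba{\~n}o \cite{Bano1} (with the closed formula for $M_\al=M_n$ quoted from Behrend--Dhillon \cite{BehrDhil1}), and the only written-out argument is for the c-sequence analogue in Section~\ref{sec:semist}. Your inversion step --- setting $a_\al=\cL^{n^2(1-g)/2}R_\al$, $b_n=\cL^{n^2(1-g)/2}M_n$, absorbing the genus part of $\ang{\cdot,\cdot}$ via $n^2=\sum_i n_i^2+2\sum_{i<j}n_in_j$, and then invoking Remark~\ref{rmr:zagier formula} with $t=\cL$ --- is exactly the paper's proof of the theorem for $r_\al$ transplanted to motives, and your exponent bookkeeping checks out, the residual power of $\cL$ collapsing to $(g-1)\sum_{i<j}n_in_j$ as claimed. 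Your stratification argument for the recursion is the standard one underlying del Ba{\~n}o's result, which the paper delegates to the literature, so you have filled in a step the paper leaves to a citation. One phrasing caveat there: the vanishing you should invoke is $\Hom(F,G)=0$ for semistable $F,G$ with $\mu(F)>\mu(G)$, which is what makes the HN filtration canonical and the strata well defined; the groups $\Hom(Q,S)$ from a lower-slope quotient to a higher-slope sub do \emph{not} vanish in general, but this is harmless since your per-step factor $\cL^{\dim\Ext^1-\dim\Hom}=\cL^{-\hi}$ uses the full Euler form, which Riemann--Roch identifies with $\ang{\cdot,\cdot}$ regardless. Finally, you correctly isolate the one genuine delicacy: the recursion is an infinite sum over $\lP_\al^i$, so both it and the Zagier inversion require a completed and localized Grothendieck ring of motives in which $\cL$ is invertible with $|\cL|>1$ and the truncated sums converge; the paper itself only gestures at this (``completed and localized in some appropriate way''), so flagging the issue rather than resolving it matches the paper's own level of rigor.
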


We can formulate now a conjecture for the motives of the moduli
spaces of stable bundles. Denote by $S\tw_\cL\pser{x_1,x_2}$
the ring obtained from $S\pser{x_1,x_2}$ by changing the multiplication
$$x^\al\circ x^\be=\cL^{-\ang{\al,\be}}x^{\al+\be}.$$

\begin{conj}
We have
$$R_\mu\circ\Exp\left(\frac{A_\mu}{1-\cL}\right)$$
in $S\tw_\cL\pser{x_1,x_2}$, where 
$R_\mu=1+\sum_{\mu(\al)=\mu}R_\al x^\al$ and
$A_\mu=\sum_{\mu(\al)=\mu}A_\al x^\al$. 
\end{conj}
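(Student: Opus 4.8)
The plan is to transport the proof of the c-sequence identity $r_\mu\circ\Exp(\frac{a_\mu}{1-\cL})=1$ of Section \ref{sec:stable} into the motivic world, replacing point counts in the Hall algebra by motives of stacks. First I would work in a \emph{motivic} Hall algebra of $\Coh X$ over \cC, equipped with an integration map $\int$ into $S\tw_\cL\pser{x_1,x_2}$ (now $S$ is the Grothendieck ring of motives and $\cL$ the Lefschetz motive) sending the class of the stack of bundles of character $\al$ to its motive times $x^\al$. Under this map the element $R_\mu=1+\sum_{\mu(\al)=\mu}R_\al x^\al$ of the conjecture is the image of the Hall-algebra element $\sum_{\mu(M)=\mu}[M]$ supporting the stack of slope-$\mu$ semistable bundles, exactly as $r_\mu=\int R_\mu(K)$ in the finite-field case. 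Since $R_\mu$ is invertible (its constant term is $1$), the conjectural identity $R_\mu\circ\Exp(\frac{A_\mu}{1-\cL})=1$ reduces to the single statement $R_\mu\inv=\Exp(\frac{A_\mu}{1-\cL})$; the input series $R_\al$ themselves are already known through the Behrend--Dhillon and del Ba{\~n}o formulas, so the content is a formula for the unknown motives $A_\al=[\lM(\al)]$.

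Next I would compute $R_\mu\inv$ inside the motivic Hall algebra, copying Lemma \ref{lmm:inverse R}: its inversion formula is purely formal and holds with motivic structure constants, and it shows that $R_\mu\inv$ is supported on polystable objects. Over \cC every stable bundle $S$ is a brick, $\End S=\cC$, so the local contribution of $S$ is controlled by the motive of $B\GL_m$ rather than by a finite count. Writing $[\GL_m]=\cL^{\binom m2}\prod_{i=1}^m(\cL^i-1)$, the factor attached to $S$ is
$$\sum_{m\ge0}\frac{(-1)^m\cL^{\binom m2}}{[\GL_m]}x^{m\ch S}=\sum_{m\ge0}[\infty,m]_\cL\,x^{m\ch S},$$
which is precisely the motivic specialisation $v\mto\cL$ of the Heine series of Lemma \ref{lmm:heine}. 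This is the exact motivic shadow of the computation of $\int R_\mu(K)\inv$ in Section \ref{sec:stable}, with the point count $\#\GL_m(\cF_s)$ replaced by the motive $[\GL_m]$.

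The final step is to assemble the local factors over all stable bundles of slope $\mu$. The stack of polystable objects is built from symmetric products of the stack of stable objects, and since the $\si$-operations on $S$ are given by symmetric products, applying $\int$ and using the Heine identity $\Exp(\frac{x^\al}{1-\cL})=\sum_{m\ge0}[\infty,m]_\cL x^{m\al}$ together with $\Exp(f+g)=\Exp(f)\Exp(g)$ should turn this assembly into
$$R_\mu\inv=\prod_{\al}\Pow\Big(\sum_{m\ge0}[\infty,m]_\cL x^{m\al},A_\al\Big)=\prod_\al\Exp\Big(A_\al\frac{x^\al}{1-\cL}\Big)=\Exp\Big(\frac{A_\mu}{1-\cL}\Big),$$
which is the structural counterpart of the chain driven by Corollary \ref{crl:pow a_al} in the finite-field proof. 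Here the c-sequence $a_\al$, which counts the $\cF_{q^k}$-points of $\lM(\al)$, is replaced throughout by the motive $A_\al=[\lM(\al)]$, and the field-extension bookkeeping of Lemma \ref{lmm:a and s} collapses because over \cC every stable bundle is already absolutely stable; the role of the Adams-twisted product $\prod_r\psi_r(\cdot)^{s_{r\al,r}}$ is taken over by the single symmetric-product operation $\Pow(\cdot,A_\al)$.

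The hard part is exactly the infrastructure that was elementary over a finite field. Two inputs must be supplied motivically: (i) that the integration map $\int$ from the motivic Hall algebra to $S\tw_\cL\pser{x_1,x_2}$ is a well-defined ring homomorphism---the analogue of Lemma \ref{lmm:integral}, which over $\cF_q$ is a one-line point count but motivically needs the ``no-pole''/integration-map theorems for motives of stacks with affine stabilisers; and (ii) the motivic analogue of Corollary \ref{crl:pow a_al} underlying the last display, namely that integrating the Heine local factor over $\lM(\al)$ is computed by $\Pow(\cdot,A_\al)$ in the symmetric-product \la-ring $S$, equivalently that the motive of the stack of polystable objects is $\Exp$ of the motive of the stack of stable objects. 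The compatibility of the symmetric-product $\si$-operations with the $B\GL_m$-automorphism factors is not formal, and it is precisely the absence of these two motivic inputs---both trivial over a finite field---that keeps the statement at the level of a conjecture rather than a theorem.
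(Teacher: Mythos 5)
The statement you are proving is stated in the paper as a conjecture, and the paper contains no proof of it; your proposal does not close that gap either, as you yourself concede in your final paragraph. (Incidentally, you correctly supplied the ``$=1$'' that the displayed formula omits.) What you have written is a faithful transport heuristic for the finite-field argument of Section \ref{sec:stable}, and the two places where it fails to be a proof are exactly the load-bearing steps. First, the integration map: Lemma \ref{lmm:integral} is proved over $\cF_q$ by a groupoid point count, and nothing formal replaces $\#\Aut M$ by the motive of a classifying stack. To run your argument one needs a motivic Hall algebra of $\Coh X$ together with an integration homomorphism into $S\tw_\cL\pser{x_1,x_2}$; the homomorphism property for the $\cL^{-\ang{\al,\be}}$-twisted product is a theorem (requiring a theory of motivic classes of Artin stacks with affine stabilizers and stratifications along which $\dim\Hom$ and $\dim\Ext^1$ are constant), not a one-line computation, and neither the paper nor your proposal constructs it.

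Second, and more fundamentally, the step you replace by ``integrating the Heine factor is computed by $\Pow(\cdot,A_\al)$'' is irreducibly arithmetic in the paper's proof. Corollary \ref{crl:pow a_al} works because the exponents in $\Pow(f,a_\al)=\prod_{k\ge1}\psi_k(f)^{s_{k\al,k}}$ have a concrete meaning --- numbers of stable bundles with endomorphism field of degree $k$ --- supplied by Galois descent (Proposition \ref{prp:stable corresp} and Lemma \ref{lmm:a and s}). Motivically, Lemma \ref{lmm:power} still gives $\Pow(f,A_\al)=\prod_{k\ge1}\psi_k(f)^{g_k}$ with $\sum_{k\mid n}kg_k=\psi_n(A_\al)$, but for $k\ge2$ the $g_k$ have no geometric interpretation and there is no analogue of Lemma \ref{lmm:a and s} forcing them to match the polystable stratification. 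Equivalently, your asserted identity ``motive of the polystable stack $=\Exp(A_\mu/(1-\cL))$'' requires the symmetric-product $\si$-operations to be compatible with the gerbe factors $[\infty,m]_\cL=\prod_{i=1}^m(1-\cL^i)\inv$ over the strata of $S^m\lM(\al)$ where stable summands collide, and requires the $\si$-operations to extend sensibly to $S$ after inverting $\cL$ and $1-\cL^i$ and completing --- which already rests on nontrivial facts such as $[S^n(X\xx\cA^1)]=\cL^n[S^nX]$. None of this is formal; it is precisely the open content that makes the statement a conjecture, so your proposal should be read as an accurate account of the expected mechanism and of why it is expected, but not as a proof.
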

  %hodge

\bibliography{../tex/fullbib}
\bibliographystyle{../tex/hamsplain}
\end{document}